\newtheorem{thm}{Theorem}
\newtheorem{corr}{Corollary}
\newtheorem{lem}{Lemma}
\newtheorem{prop}{Proposition}
\newtheorem{rem}{Remark}
\newtheorem{df}{Definition}
\newcommand{\RM}{\mathbb{R}}
\newcommand{\CM}{\mathbb{C}}
\newcommand{\MM}{\,\mbox{\bf M}}
\newcommand{\HM}{\,\mbox{\bf H}}
\newcommand{\mm}{\;\mbox{\bf m}}
\newcommand{\tr}{\,\mbox{\rm tr}}
\newcommand{\cn}{\operatorname{cn}}
\newcommand{\spec}{\operatorname{spec}}
\title{Nonlinear Stability of Periodic Traveling Wave Solutions of the Generalized Korteweg-de Vries Equation}
\author{Mathew A. Johnson}
\begin{document}
\bibliographystyle{plain}
\maketitle

\begin{abstract}
In this paper, we study the orbital stability for a four-parameter family of periodic stationary traveling wave
solutions to the generalized Korteweg-de Vries (gKdV) equation
\[
u_t=u_{xxx}+f(u)_x.
\]
In particular, we derive sufficient conditions for such a solution to be orbitally stable in terms
of the Hessian of the classical action of the corresponding traveling wave ordinary differential
equation restricted to the manifold of periodic traveling wave solution.
We show this condition is equivalent to the solution being spectrally stable with respect
to perturbations of the same period in the case when $f(u)=u^2$ (the Korteweg-de Vries equation)
and in neighborhoods of the homoclinic and equilibrium solutions if $f(u)=u^{p+1}$ for some $p\geq 1$.
%it is shown that there are two indices which can be computed from the conserved quantities
%of the gKdV which determine whether a particular solution is orbitally stable or not.  The first
%arises from an orientation index calculation, and the negativity
%of this index implies exponential instability of the periodic wave.  If this index is positive, then one
%has orbital stability if and only if the second index is positive.  We show this second index is indeed positive
%in a neighborhood of the solitary wave, thus recapturing the solitary wave stability theory in this limit.
%In this sense, the orientation index is the correct generalization of the well
%known solitary wave stability index.
\end{abstract}

%In this paper, we study the orbital stability for a four-parameter family of periodic stationary traveling wave solutions to the generalized %Korteweg-de Vries (gKdV) equation. In particular, we derive sufficient conditions for such a solution to be orbitally stable in terms of the Hessian %of the classical action of the corresponding traveling wave ordinary differential
%equation restricted to the manifold of periodic traveling wave solution.
%We show this condition is equivalent to the solution being spectrally stable with respect to perturbations of the same period in the case of the %Korteweg-de Vries equation, as well as in a neighborhood of the solitary wave for power-law
%nonlinearities.

\section{Introduction}
This paper concerns the stability analysis of periodic traveling wave solutions of the generalized
Korteweg-de Vries (gKdV) equation
\begin{equation}
u_t=u_{xxx}+f(u)_x\label{gkdv}
\end{equation}
where $f$ is a sufficiently smooth non-linearity satisfying certain convexity assumptions.
Probably the most famous equation among this family is given by $f(u)=u^2$, in which case
\eqref{gkdv} corresponds to the Korteweg-de Vries (KdV) equation.  The KdV
serves as an approximate description of small amplitude waves propagating in a
weakly dispersive media.  When the amplitude of the wave is not small, however, the gKdV
\eqref{gkdv} arises where the nonlinearity $f$ describes the effects of the solution
interacting with itself through the nonlinear media.

It is well known that the gKdV equation admits traveling wave solutions of the form
\begin{equation}
u(x,t)=u_c(x+ct),\;\;x\in\RM,\;\;t\in\RM\label{travel}
\end{equation}
for wave speeds $c>0$.  Historically, there has been much interest in the stability of traveling solitary waves of the form \eqref{travel}
where the profile $u_c$ decays exponentially to zero as its argument becomes unbounded.  Such waves
were initially discovered by Scott Russell in the case of the KdV where the traveling
wave is termed a soliton.  While \eqref{gkdv} does not in general posess exact ``soliton" solutions,
which requires complete integrability of the partial differential equation, %but none the less
exponentially decaying traveling wave solutions still exist.  Moreover, the stability of
such solitary waves is  well understood and dates back to the pioneering work of Benjamin \cite{Ben},
which was then further developed by Bona \cite{Bona}, Grillakis \cite{G}, Grillakis, Shatah and
Strauss \cite{GSS}, Bona, Souganides and Strauss \cite{BSS},
Pego and Weinstein \cite{PW2,PW1}, Weinstein\cite{MIW1,MIW2}, and many others.
% This theory was pioneered by Benjamin \cite{Ben} and further developed by
%Bona \cite{Bona}, Grillakis \cite{G}, Grillakis, Shatah and Strauss \cite{GSS}, Bona, Souganides and Strauss \cite{BSS},
%Pego and Weinstein \cite{PW2,PW1}, Weinstein\cite{MIW1,MIW2}, and many others.
In this theory, it is shown that traveling solitary waves of \eqref{gkdv} are orbitally stable if the the solitary
wave stability index
\begin{equation}
\frac{\partial}{\partial c}\int_{-\infty}^\infty u_c^2\;dx\label{solitaryindex}
\end{equation}
is positive, and is orbitally unstable if this index is negative.  In the case where \eqref{gkdv} has a
power-law nonlinearity $f(u)=u^{p+1}$, the sign of this stability index is positive
if $p<4$ and is negative if $p>4$.  Moreover, in \cite{PW2,PW1} it was shown that the mechanism
for this instability is as follows: Linearizing the traveling wave partial differential equation
\begin{equation}
u_t=u_{xxx}+f(u)_x-cu_x,\label{travelgkdv}
\end{equation}
which is satisfied by the traveling solitary wave profile, about the solution $u_c$
and taking the Fourier transform in time leads to a spectral problem of the form
\[
\partial_x\mathcal{L}[u_c]v=\mu v
\]
considered on the real Hilbert space $L^2(\RM)$, where $\mathcal{L}[u_c]$ is a second
order self adjoint differential operator with asymptotically constant
coefficients.  The authors then make a detailed study of the Evans function $D(\mu)$, which is an analytic function
such that if if $\psi$ is a solution of \eqref{travelgkdv}
satisfying $\psi(x)\sim e^{\omega x}\;\;{\rm as }\;x\to\infty$, then
$\psi(x)\sim D(\mu)e^{\omega x}\;\;{\rm as }\;x\to-\infty$: in essence, $D(\mu)$ plays the role
of a transmission coefficient familiar from quantum scattering theory.
This approach motivated by the fact that for ${\rm Re}(\mu)>0$ the vanishing of $D(\mu)$
implies that $\mu$ is an $L^2$ eigenvalue of the
linearized operator $\partial_x\mathcal{L}[u_c]$, and conversely.  Pego and Weinstein were able
to use this machinery to prove that the Evans function satisfies
\[
\lim_{\mu\to+\infty}{\rm sign}(D(\mu))>0
\]
as well as the asymptotic relation
\[
D(\mu)=C_1\left(\frac{\partial}{\partial c}\int_{-\infty}^\infty u_c(x)^2dx\right)\mu^2+o(|\mu|^2)
\]
in a neighborhood of $\mu=0$, for some positive constant $C_1$.  Thus, in the case when the solitary wave stability index is negative,
it follows by the continuity of $D(\mu)$ for $\mu\in\RM^+$ that $D(\mu)<0$
for small positive $\mu$ and hence $D(\mu)$ must have a positive root, thus proving
exponential instability of the underlying traveling solitary wave in this case.  %Moreover, it follows that
%as one transitions from stability to instability, a pair of purely imaginary eigenvalues collide
%at the origin and bifurcate to form a pair of real unstable eigenvalues.

In this paper, however, we are concerned with traveling wave solutions of \eqref{gkdv} of
the form \eqref{travel}, where this time we require the profile $u_c$ be a periodic
function of its argument.  In contrast to the traveling solitary wave theory, % for which the stability theory is well understood,
relatively little is known concerning the stability of periodic traveling waves of
nonlinear dispersive equations such as the gKdV.  Existing results usually come two types: spectral stability
with respect to localized or bounded perturbations, and orbital (nonlinear) stability with
respect to periodic perturbations.  Most spectral stability results seem to rely on a Floquet-Bloch decomposition
of the linearized operator and a detailed analysis of the resulting family of spectral problems, or else perturbation
techniques which analyze modulational instability (spectrum near the origin).  Many of the %orbital
stability results known to the author for the traveling wave solutions of the gKdV
concern the stability of the cnoidal solutions of the KdV
\[
u(x,t)=u_0+12k^2\kappa^2\cn^2\left(\kappa\left(x-x_0+\left(8k^2\kappa^2-4\kappa^2+u_0\right)t\right),k\right),
%note the + infront of hte wave speed, which corresponds to the soln in BD via x --> -x
\]
where $k\in[0,1)$ and $\kappa,\;x_0,$ and $u_0$ are real constants.  Such cnoidal solutions of the KdV
have been studied by McKean \cite{McKean}, and more recently in papers by Pava, Bona, and Scialom \cite{PBS}
and by Bottman and Deconinck \cite{BD}.  The results in \cite{McKean} uses the complete integrability
of the KdV in his study of the periodic initial value problem in order to show nonlinear stability of
the cnoidal solutions to perturbations of the same period.  Also using the machinery of complete
integrability, in \cite{BD} the spectrum of the linearized operator on the Hilbert space $L^2(\RM)$
is explicitly computed and shown to be confined to the imaginary axis.
In particular, it follows that cnoidal solutions of the KdV are spectrally stable to perturbations of the same period,
and more generally, perturbations with periods which are integer multiples of the period of the cnoidal wave.

In \cite{PBS}, more classical energy functional methods are used to show that such cnoidal
solutions are orbitally stable to perturbations of the same period as the cnoidal wave.  Since their
techniques do not rely on the complete integrability of the KdV, it is possible that their
methods can be extended to equations in the family described by \eqref{gkdv} other than the KdV.
In this paper, conduct exactly such an analysis and show how the orbital stability of the cnoidal solutions
of the KdV can be recovered from this theory once it is known that such solutions are spectrally
stable to perturbations of the same period.  It should be noted, however, that our analysis varies
slightly from that of \cite{PBS} due to the fact that the \eqref{gkdv} is not in general Galilean invariant.
Thus, we must make use of all three conserved quantities of the gKdV flow: see Remark \ref{kdv:rem} below.
%Moreover, our results are not as explicit as those of the previously mentioned papers, which may be expected
%since we are considering an entire {\it family} of periodic traveling wave solutions of the form \eqref{gkdv}.
Our goal is to derive geometric criterion for the orbital stability of such solutions which
are universal to the family of equations \eqref{gkdv}.  As such, our results are not as
explicit as in the papers mentioned previously, yet they apply to a much broader range of equations.
% Yet, in specific cases, more information can be extracted
%from our methods by evaluating the resulting geometric quantities either numerically or
%directly using properties of the underlying solution.

Returning to the generalized KdV equation \eqref{gkdv}, spectral stability results have recently been
obtained by Haragus and Kapitula \cite{HK} as well as by the author in collaboration with Bronski \cite{BrJ}.
In \cite{HK}, the spectral stability of small amplitude periodic traveling wave solutions of \eqref{gkdv}
with $f(u)=u^{p+1}$ was studied.  By using a Floquet-Bloch decomposition of the linearized spectral problem,
the authors found that such solutions\footnote{In the next section we will give a parametrization of the
periodic traveling wave solutions of \eqref{gkdv}.  In \cite{HK}, they only consider small amplitude
periodic traveling waves with the integration parameter $a$ (see section 2 for definition) being small.}
are spectrally stable if $p\in[1,2)$ and exhibit a
modulational instability if $p>2$.  In particular, they found that such solutions are always spectrally
stable to perturbations of the same period: in section 5, we will verify and extend this result
through the use of the periodic Evans function.  %While such a decomposition is well suited to study the modulational
%stability of periodic traveling wave solutions, it is not clear how these methods can yield information
%about the spectrum other than at the origin.
In \cite{BrJ}, a modulational instability analysis of
the periodic traveling wave solutions of \eqref{gkdv} by using Floquet theory and developing a perturbation
theory appropriate to the Jordan structure of the period map at the origin.
As a by product of their analysis, a sufficient condition for
exponential instability of the underlying periodic traveling wave with respect to periodic
perturbations was obtained in terms of the conserved quantities of the gKdV flow.
In particular, a stability index was derived %which we denote $\{T,M,P\}_{a,E,c}$,
in a manner quite similar to the solitary wave theory outlined above such that the negativity
of this index implies exponential instability of the periodic traveling wave with respect to perturbations
of the same period.  The relevant results of this analysis can be found in section $3$.

It seems natural to wonder role this periodic instability index derived in \cite{BrJ}
plays in the nonlinear stability of the periodic
traveling wave.  As mentioned above, the analogue of this index controls the nonlinear stability in
the solitary wave context.  Thus, one would like the periodic traveling wave to be nonlinearly stable
whenever the aforementioned periodic stability index positive.  While we are able to show this
is true in certain cases, we find that two other quantities, which are essentially not present in the
spectral stability theory\footnote{This is not quite correct.  They are present, but their signs
do not play into the spectral stability theory.  See section 3 for more details.} nor the solitary
wave theory, play a role in the nonlinear  stability.   This is the content of our main theorem, which is stated below
\footnote{For the definition of the real Hilbert space $X$, see section $4$.}.

\begin{thm}\label{os:state}
Let $u(x+ct)$ be a periodic traveling wave solution of \eqref{gkdv}, whose orbit is bounded in
phase space by the homoclinic orbit.  Moreover, assume the periodic instability index $\{T,M,P\}_{a,E,c}$
is positive.  Then there exist two real real numbers, denoted $T_E$ and $\{T,M\}_{a,E}$,
associated to the periodic traveling wave profile $u$ such that if $T_E> 0$ and $\{T,M\}_{a,E}>0$, then
there exists $C_0>0$ and $\varepsilon>0$ such that
for all $\phi_0\in X$ with $\|\phi_0\|_{X}<\varepsilon$, the solution $\phi(x,t)$ of \eqref{gkdv}
with initial data $\phi(x,0)=u(x)+\phi(x)$ satisfies
\[
\inf_{\xi\in\RM}\|\phi(\cdot,t)-u(x+ct+\xi)\|_{X}\leq C_0\|\phi_0\|_{X}
\]
for all $t>0$.
\end{thm}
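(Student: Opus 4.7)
The plan is to adapt the Grillakis--Shatah--Strauss energy method to the periodic setting, using all three conserved quantities of the gKdV flow (mass, momentum, and Hamiltonian). Since \eqref{gkdv} is Hamiltonian, I would introduce an augmented Lyapunov functional
\[
\mathcal{E}(v) = H(v) + c\,M(v) + a\,P(v),
\]
where the multipliers are the parameters appearing in the traveling wave ODE $u_{xx} + f(u) - c u = a$. By construction $u$ is a critical point, $\mathcal{E}'(u) = 0$, so the second-order expansion $\mathcal{E}(u + v) - \mathcal{E}(u) = \tfrac{1}{2}\langle \mathcal{L} v, v\rangle + O(\|v\|_X^3)$ produces the self-adjoint periodic Schr\"odinger operator $\mathcal{L} = -\partial_x^2 - f'(u) + c$ on $L^2_{\mathrm{per}}([0,T])$.

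The first substantive step is a spectral inventory of $\mathcal{L}$. By periodic Sturm--Liouville theory $\mathcal{L}$ has discrete spectrum with finitely many negative eigenvalues, and differentiating the ODE with respect to $x$ places $u_x$ in the kernel. Differentiating the profile with respect to the ODE parameters $(a, E, c)$ produces a three-dimensional family of generalized eigenfunctions at the origin, which in turn identifies the precise Morse index of $\mathcal{L}$ acting on $T$-periodic functions. This inventory is essentially the periodic-instability-index computation already carried out in \cite{BrJ} and recalled in section 3.

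The heart of the proof is coercivity of $\mathcal{L}$ on the codimension-three subspace
\[
\mathcal{S} = \{\,v \in X : \langle H'(u), v\rangle = \langle M'(u), v\rangle = \langle P'(u), v\rangle = 0\,\},
\]
modulo the translational mode $u_x$. Following the classical Vakhitov--Kolokolov calculation, the restriction $\mathcal{L}|_{\mathcal{S}}$ is positive definite (modulo $u_x$) if and only if a certain bordered matrix built from $\langle \mathcal{L}^{-1} H'(u), P'(u)\rangle$-type entries is positive definite. Using that the four-parameter family of profiles is parametrized by $(a, E, c)$ and the phase, I would identify these bordered entries with the partial derivatives of the conserved quantities along the manifold of periodic solutions, i.e.\ with the Hessian of the classical action. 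Sylvester's criterion then reduces positivity to the sign of the three nested principal minors, and a direct computation shows these minors are precisely $T_E$, $\{T,M\}_{a,E}$, and $\{T,M,P\}_{a,E,c}$---the three hypotheses of the theorem. The restriction that $u$ lie inside the homoclinic orbit in phase space is what guarantees that the Morse index of $\mathcal{L}$ is no larger than three, so that the three minor conditions actually suffice.

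The closure is routine. Using conservation of $H$, $M$, $P$ along the flow, any initial perturbation $\phi_0$ close in $X$ to $u$ produces a solution whose profile remains close to the three-dimensional constraint manifold. I would introduce a modulated phase $\xi(t)$ by the implicit function theorem so that the perturbation $v(\cdot, t) = \phi(\cdot, t) - u(\cdot + ct + \xi(t))$ is orthogonal to $u_x$, apply the coercive estimate of the previous paragraph to $\mathcal{E}(\phi) - \mathcal{E}(u)$, and conclude via a continuity argument. The principal obstacle is the third paragraph: verifying that the bordered-matrix reduction genuinely yields the three quantities $T_E$, $\{T,M\}_{a,E}$, $\{T,M,P\}_{a,E,c}$ as nested (not competing) minors, so that all three hypotheses enter simultaneously in the expected hierarchical way.
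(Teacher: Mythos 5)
Your overall strategy is the one the paper follows --- augmented Lyapunov functional, spectral inventory of $\mathcal{L}[u]=-\partial_x^2-f'(u)+c_0$, coercivity on the level set of the conserved quantities, and a modulated-phase closure --- but the step you yourself flag as the ``principal obstacle'' is exactly where the content of the theorem lives, and as framed it contains a sign-level error that would derail the computation. The action Hessian $D^2_{a,E,c}K$ is \emph{not} positive definite under the hypotheses: it necessarily has exactly one negative eigenvalue, and in the paper's ordering the principal minors satisfy $d_1=T_E>0$, $d_2=\{M,T\}_{a,E}<0$, $d_3=\{M,T,P\}_{a,E,c}<0$. So ``Sylvester's criterion for positive definiteness of the bordered matrix'' is not the right reduction; what must be verified is the Grillakis--Shatah--Strauss index identity (the single negative direction of $\mathcal{L}[u]$ is absorbed by the constraints). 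The paper executes this not with $\mathcal{L}^{-1}$-type entries but with the single explicit element $\phi_0=\{u,T,M\}_{a,E,c}$, for which
\[
\mathcal{L}[u]\phi_0=\{T,M\}_{E,c}-\{T,M\}_{a,E}\,u,\qquad
\left<\mathcal{L}[u]\phi_0,\phi_0\right>=-\{T,M\}_{a,E}\{T,M,P\}_{a,E,c}<0;
\]
since $\mathcal{L}[u]\phi_0$ is a combination of $1$ and $u$ it annihilates the tangent space $\mathcal{T}_0$ of the constraint set, and a Cauchy--Schwarz argument in the spectral decomposition then yields positivity of $\left<\mathcal{L}[u]\phi,\phi\right>$ there. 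Without this identity (or an equivalent one) your argument does not establish that the three Jacobians are the quantities controlling coercivity.

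Two further points. First, your claim that the bound on the Morse index of $\mathcal{L}[u]$ comes from the orbit lying inside the homoclinic loop is not how the paper argues: Sturm--Liouville oscillation theory applied to $u_x$ (one sign change per period) caps the index at two unconditionally, and it is the hypothesis $T_E>0$ --- read off from the monodromy matrix of $\mathcal{L}[u]v=0$ computed in the basis $\{u_E,u_x\}$ --- that forces the index to equal one and the kernel to be simple (Lemmas \ref{specL} and \ref{periodLem}). Second, your constraint set is cut out by three functionals, but $\mathcal{E}'(u)=-c_0\mathcal{P}'(u)-a_0\mathcal{M}'(u)$ at the critical point, so the energy constraint is redundant; the paper works on the codimension-two manifold $\Sigma_0=\{\mathcal{M}=\mathrm{const},\,\mathcal{P}=\mathrm{const}\}$ and, for initial data off that manifold, passes to a nearby wave $\widetilde u(\cdot;a_0+a,E_0+E,c_0+c)$ whose invariants match those of the data --- possible precisely because $\{T,M,P\}_{a,E,c}\neq 0$ makes $(a,E,c)\mapsto(T,M,P)$ a local diffeomorphism. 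Your closing paragraph glosses this last step, and it genuinely uses the hypothesis on the third Jacobian.
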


\begin{rem}
Throughout this paper, ``orbital stability" will always mean orbital stability with respect
to perturbations of the same period as the underlying wave.
\end{rem}

The outline for this paper is as follows.  Section $2$ will be devoted to a study of the basic
properties of the periodic traveling wave solution of equation \eqref{gkdv}.  In section $3$, we will
recall the recent results of \cite{BrJ} concerning the spectral stability of periodic traveling wave solutions
of \eqref{gkdv} with respect to perturbations of the same period.  The resulting instability
index will play an important role throughout the rest of the paper.  Section $4$ is devoted
to the proof of Theorem \ref{os:state}.  Finally, two applications of our theory are described
in sections 5 and 6 in the case of a power-law nonlinearity $f(u)=u^{p+1}$ for $p\geq 1$.  In
section 5, we study the orbital stability of periodic traveling wave solutions of \eqref{gkdv} in neighborhoods
of the homoclinic and equilibrium solutions.  Section 6 is devoted to the application of
our theory to the case of the KdV.  In particular, it is shown that such solutions are orbitally
stable if and only if they are spectrally stable to perturbations of the same period as the underlying
wave.

\section{Properties of the Stationary Periodic Traveling Waves}

In this section, we recall the basic properties of the periodic traveling wave solutions of \eqref{gkdv}.
For each number $c>0$, a stationary traveling wave solution of \eqref{gkdv} with wave speed $c$
is a solution of the traveling wave ordinary differential equation
\begin{equation}
u_{xxx}+f(u)_x-cu_x=0,\label{travelode}
\end{equation}
i.e. they are solutions of \eqref{gkdv} which are stationary in the moving coordinate frame
defined by $x+ct$.  Clearly, such solutions are reducible to quadrature and satisfy
\begin{align}
u_{xx}+f(u)-cu&=a,\label{quad1}\\
\frac{1}{2}u_x^2+F(u)-\frac{c}{2}u^2-a u&=E,\label{quad2}
\end{align}
where $a$ and $E$ are real constants of integration, and $F$ satisfies $F'=f$, $F(0)=0$.
In order to ensure the existence of periodic orbits of \eqref{travelode}, we require that the effective
potential
\[
V(u;a,c)=F(u)-\frac{c}{2}u^2-au
\]
is of class $C^2(\RM)$ and has a local minimum.  Notice this places a restriction on the allowable parameter regime for our
problem.  This motivates the following definition.

\begin{df}
We define the set $\Omega\subset\RM^3$ to be the open set consisting of all triples $(a,E,c)$
such that $c>0$ and the solution $u(x)=u(x;a,E,c)$ of \eqref{quad2} is periodic and its orbit
in phase space lies below the separatrix.
\end{df}

\begin{rem}
Taking into account the translation invariance of \eqref{gkdv}, it follows that for each
$(a,E,c)\in\Omega$ we can construct a one-parameter family of periodic traveling wave
solutions of \eqref{gkdv}: namely
\[
u_{\xi}(x,t)=u(x+ct+\xi;a,E,c)
\]
where $\xi\in\RM$.  Thus, the periodic traveling waves of \eqref{gkdv} constitute a four
dimensional manifold of solutions.  However, outside of the null-direction of the linearized operator which
this generates, the added constant of integration does not play an important role in our theory.
In particular, we can mod out the continuous symmetry of \eqref{gkdv} by requiring all periodic
traveling wave solutions to satisfy the conditions
$u_x(0)=0$ and $V\;'(u(0))<0$.  As a result, each periodic solution of \eqref{travelode} is an even function
of the variable $x$ with a local maximum at $x=0$.
\end{rem}

\begin{rem}
Notice that $a$ and $E$ are conserved quantities of the ODE flow generated by \eqref{travelode}.  Moreover,
the classical solitary waves corresponding to solutions of \eqref{gkdv} satisfying $\lim_{x\to\pm\infty}u(x)=0$
correspond to $a=E=0$, and hence constitute a codimension two subset of the traveling wave solutions of \eqref{gkdv}.
It seems natural then to expect that the stability of periodic traveling wave solutions will involve
variations in these added parameters, just as the solitary wave stability index involves variations
in the one (modulo translation) free parameter $c$.
\end{rem}

Throughout this paper, we will always assume that our periodic traveling waves correspond to an $(a,E,c)$
within the open region $\Omega$, and that the roots $u_{\pm}$ of $E=V(u;a,c)$ with $V(u;a,c)<E$
for $u\in(u_{-},u_{+})$ are simple.  Moreover, we assume the potential $V$ does not have
a local maximum in the open interval $(u_-,u_+)$.  It follows that $u_{\pm}$ are $C^1$ functions of
$a$, $E$, $c$ on $\Omega$, and that $u(0)=u_{-}$.  Moreover, given $(a,E,c)\in\Omega$, we define the
period of the corresponding solution to be
\begin{equation}
T=T(a,E,c):= 2\int_{ u_{-}}^{u_{+}} \frac{du}{\sqrt{2\left(E-V(u;a,c)\right)}}.\label{period}
\end{equation}
The above interval can be regularized at the square root branch
points $ u_-,\; u_{+}$ by the following procedure\footnote{In section 5 we will use a slightly different regularization which
is well-adapted to the KdV equation.}:
Write $E-V(u;a,c)=(u- u_{-})(u_{+}-u)Q(u)$ and consider
the change of variables $ u= \frac{u_++u_-}{2} + \frac{u_+-u_-}{2} \sin(\theta)$.  Notice
that $Q(u)\neq0$ on $[ u_-, u_{+}]$.  It follows that
$du=\sqrt{(u- u_{-})(u_{+}-u)}d\theta$ and hence
\eqref{period} can be written in a regularized form as
\[
T(a,E,c)=2\int_{-\frac{\pi}{2}}^{\frac{\pi}{2}}\frac{d\theta}{\sqrt{Q\left(\frac{u_++u_-}{2} + \frac{u_+-u_-}{2} \sin(\theta)\right)}}.
\]
In particular, we can differentiate the above relation with respect to the parameters $a$, $E$, and $c$ within
the parameter regime $\Omega$.  Similarly the mass, momentum, and Hamiltonian of the traveling wave are given
by %the first and second moments of this density, i.e.
\begin{align}
M(a,E,c) &= \langle u\rangle = \int_0^T u(x)\; dx = 2\int_{u_-}^{u_+} \frac{u\; du}{\sqrt{2\left(E-V(u;a,c)\right)}}\label{mass} \\
P(a,E,c) &= \langle u^2\rangle = \int_0^T u^2(x)\; dx = 2\int_{u_-}^{u_+} \frac{u^2\; du}{\sqrt{2\left(E-V(u;a,c)\right)}}\label{momentum} \\
H(a,E,c) &=  \left\langle \frac{u_x^2}{2} - F(u)\right\rangle =  2\int_{u_-}^{u_+} \frac{E-V(u;a,c) - F(u)}{\sqrt{2\left(E-V(u;a,c)\right)}}\;du\label{energy} .
\end{align}
Notice that these integrals can be regularized as above, and represent conserved
quantities of the gKdV flow restricted to the manifold of periodic traveling
wave solutions. In particular one can differentiate the above expressions with respect to the parameters $(a,E,c)$.
%Note that there is a third constant of integration corresponding to
%translation invariance, but this can clearly be modded out and does not play an
%important role in the theory.

Throughout this paper, a large role will be played by the gradients of the above
conserved quantities.  However, by the Hamiltonian structure of \eqref{travelode},
the corresponding derivatives of the period, mass, and momentum restricted
to a periodic traveling wave $u(\cdot;a,E,c)$ with $(a,E,c)\in\Omega$
satisfy several useful identities.  In particular if we define the classical action
\begin{equation}
K = \oint u_x \;du = \int_0^T u_x^2 \;dx = 2\int_{u_-}^{u_+}\sqrt{2(E-V(u;a,c))}\;du \nonumber
\end{equation}
(which is not itself conserved) then this quantity satisfies the following relations
\begin{align*}
K_E(a,E,c) &= T(a,E,c) \\
K_a(a,E,c) &= M(a,E,c) \\
2K_c(a,E,c) &= P(a,E,c).
\end{align*}
%See \cite{BrJ} for details of the proof.
Moreover, using the fact that $T,M,P$
and $H$ are $C^1$ functions of parameters $(a,E,c)$, the above implies the
following relationship between the gradients of the conserved quantities
of the gKdV flow restricted to the periodic traveling waves:
\begin{equation}
E \nabla T + a \nabla M + \frac{c}{2} \nabla P + \nabla H = 0,\label{overdetermined}
\end{equation}
where $\nabla = (\partial_a,\partial_E,\partial_c)$: see the appendix
of \cite{BrJ} for details of this calculation.  The subsequent theory is developed most naturally
in terms of the quantities $T$, $M$, and $P$.  However, it is possible to restate our results
in terms of $M$, $P$, and $H$ using the identity \eqref{overdetermined}.
This is desirable since these have a natural interpretation as conserved
quantities of the partial differential equation \eqref{gkdv}.

We now discuss the parametrization of the periodic solutions of \eqref{travelode} in more
detail.  A major technical necessity throughout this paper is that the constants
of motion for the PDE flow defined by \eqref{gkdv} provide (at least locally) a good
parametrization for the periodic traveling wave solutions.  In particular, we assume
for a given $(a,E,c)\in\Omega$ the conserved quantities
$(H,M,P)$ are good local coordinates for the periodic traveling waves near $(a,E,c)$.
More precisely, we assume the map $(a,E,c)\to(H(a,E,c),M(a,E,c),P(a,E,c))$ has a unique
$C^1$ inverse in a neighborhood of $(a,E,c)$.  If we adopt the notation
\[
\{f,g\}_{x,y} = \left|\begin{array}{cc} f_x & g_x \\ f_y & g_y \end{array}\right|
\]
for $2 \times 2$ Jacobians, and $\{f,g,h\}_{x,y,z}$ for the analogous $3\times 3$ Jacobian,
it follows this is possible exactly when $\{H,M,P\}_{a,E,c}$ is non-zero, which is equivalent to
$\{T,M,P\}_{a,E,c}\neq 0$ by \eqref{overdetermined}.  Also, we will need to know that two
of quantities $T$, $M$ and $P$ % two of the conserved quantities
provide a local parametrization for the traveling waves with fixed wave speed.  By reasoning
as above, this happens exactly when the matrix
\[
\left(\begin{array}{ccc} T_a & M_a & P_a \\ T_E & M_E & P_E \end{array}\right)
\]
has full rank.  A sufficient requirement is thus $\{T,M\}_{a,E}\neq 0$: trivial modifications
of our theory is needed if $\{T,M\}_{a,E}=0$ but one of the other sub-determinants
do not vanish.

\section{Spectral Stability Analysis}

In this section, we recall the relevant results of \cite{BrJ} on the spectral stability of periodic-traveling wave solutions of
the gKdV.  Suppose that $u=u(\;\cdot\;;a,E,c)\in C^3(\RM;\RM)$ is a $T$-periodic periodic solution of \eqref{travelgkdv}.  Linearizing about
this solution and taking the Fourier transform in time leads to the spectral problem
\begin{equation}
\partial_x\mathcal{L}[u]v=\mu v\label{linearization}
\end{equation}
considered on $L^2(\RM;\RM)$, where $\mathcal{L}[u]:=-\partial_x^2-f'(u)+c$ is a closed symmetric linear operator with periodic coefficients.
In particular, since $u$ is bounded it follows that $\mathcal{L}[u]$ is in fact a self-adjoint operator on $L^2(\RM)$ with
densely defined domain $C^\infty(\RM)$.  Notice that considering \eqref{linearization} on $L^2(\RM)$ corresponds
to considering the spectral stability of $u$ with respect to localized perturbations.  One could also
study the spectral stability with respect to uniformly bounded perturbations, but by standard results
in Floquet theory the resulting theories are equivalent.  In order to understand the nature
of the spectrum of \eqref{linearization} we make the following definition.

\begin{df}
The monodromy matrix $\MM(\mu)$ is defined to be the period map
\[
\MM(\mu)=\Phi(T,\mu)
\]
where $\Phi(x,\mu)$ satisfies the first order system
\begin{equation}
\Phi_x=\HM(x,\mu)\Phi,\;\;\Phi(0,\mu)={\bf I}\label{system}
\end{equation}
where ${\bf I}$ is the $3\times 3$ identity matrix and
\[
\HM(x,\mu)=\left(
           \begin{array}{ccc}
             0 & 1 & 0 \\
             0 & 1 & 0 \\
             -\mu-u_xf''(u) & -f'(u)+c & 0 \\
           \end{array}
         \right).
\]
\end{df}

It follows from a straightforward calculation that the operator $\partial_x\mathcal{L}[u]$ has no $L^2$ eigenvalues.  Indeed,
from Floquet's theorem we know that every solution of \eqref{system} can be written in the form $\Phi(x)=e^{\omega x}p(x)$
where $p$ is a $T$-periodic function and $\lambda=e^{\omega T}$ is an eigenvalue of $\MM(\mu)$.  Conversely, each eigenvalue of
$\MM(\mu)$ leads to a solution of this form.  It then follows that for any $N\in\mathbb{N}$ one has
\[
\Phi(NT)=e^{\omega N T}p(0)=\lambda^Np(0).
\]
If $\Phi(x)$ decays as $x\to\infty$, the above relation implies that it must be unbounded as $x\to -\infty$, from which
our claim follows.  In particular, a solution $\Phi(x)$ of \eqref{system} can be at most bounded on $\RM$.  This
leads us to the following definition.

\begin{df}
We say $\mu\in\spec\left(\partial_x\mathcal{L}[u]\right)$ is there exists a non-trivial bounded function
$\psi$ such that $\partial_x\mathcal{L}[u]\psi=\mu\psi$ or, equivalently, if there exists a $\lambda\in\CM$
with $|\lambda|=1$ such that
\[
\det\left(\MM(\mu)-\lambda{\bf I}\right)=0.
\]
Following Gardner \cite{Gard}, we define the periodic Evans function $D:\CM\times\CM\to\CM$ to be
\[
D(\mu,\lambda)=\det\left(\MM(\mu)-\lambda{\bf I}\right).
\]
Moreover, we say the periodic solution $u(x;a,E,c)$ is spectrally stable if $\spec\left(\partial_x\mathcal{L}[u]\right)$ is a
subset of the imaginary axis.
\end{df}

\begin{rem} In general, one defines a solution $u(x;a,E,c)$ to be spectrally stable if $\spec(\partial_x\mathcal{L}[u])$
does not intersect the open right half plane.  However, the Hamiltonian nature of the problem implies the spectrum
is symmetric with respect to the real and imaginary axis, from which our definition follows.  Notice that since
we primarily concerned with roots of $D(\mu,\lambda)$ with $\lambda$ on the unit circle, we will frequently
work with the function $D(\mu,e^{i\kappa})$ for $\kappa\in\RM$.  However, the more general definition given above
is useful for certain analyticity arguments.
\end{rem}

It follows that the set $\spec(\partial_x\mathcal{L}[u])$ consists of precisely the $L^\infty$ eigenvalues
of the linearized operator $\partial_x\mathcal{L}[u]$.  Moreover, if we define a projection operator
$\pi_1:\CM\times\CM\to\CM$ by $\pi_1(z_1,z_2)=z_1$ for $(z_1,z_2)\in\CM\times\CM$, then
the projection of the zero set of $D(\mu,e^{i\kappa})$ in $\CM\times S^1$ via $\pi_1$
is precisely the set $\spec(\partial_x\mathcal{L}[u])$.  In particular we see that $D(\mu,1)$
detects spectra which corresponds to perturbations which are periodic with the same period
as the underlying solution $u$.  If the zero set of $D(\mu,1)$ is a subset
of the imaginary axis, it follows that the underlying periodic solution is spectrally stable with respect to
perturbations of the same period.  Although this is a long way from concluding any sense of nonlinear
stability, we find this analysis is vital to understanding the orbital stability calculation.
In particular, we recall the following easily proved result.

\begin{lem}
The function $D(\mu,1)$ is odd and the limit $\lim_{\mu\to\infty}{\rm sign}\left(D(\mu,1)\right)$ exists and is negative.
\end{lem}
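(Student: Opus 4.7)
The plan is to establish the two claims separately by exploiting two structural features of the monodromy matrix: first, that $\operatorname{tr}\HM(x,\mu)=0$ forces $\det\MM(\mu)=1$ by Liouville's formula; and second, that the traveling wave profile $u$ is even about $x=0$ (by the normalization $u_{x}(0)=0$, $V'(u(0))<0$), which produces an explicit conjugacy between $\MM(-\mu)$ and $\MM(\mu)^{-1}$.

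For the oddness, I would set $S=\operatorname{diag}(1,-1,1)$ and verify the pointwise matrix identity $-S\,\HM(-x,\mu)\,S^{-1}=\HM(x,-\mu)$, which follows by inspection once one uses $u(-x)=u(x)$ and $u_{x}(-x)=-u_{x}(x)$. This identity says that $\Psi(x):=S\,\Phi(-x,\mu)\,S^{-1}$ satisfies the same initial value problem as $\Phi(x,-\mu)$ for the system at parameter $-\mu$, so by uniqueness $\Phi(x,-\mu)=S\,\Phi(-x,\mu)\,S^{-1}$. Evaluating at $x=T$ and using the $T$-periodic cocycle relation $\Phi(-T,\mu)=\MM(\mu)^{-1}$ yields $\MM(-\mu)=S\,\MM(\mu)^{-1}S^{-1}$. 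Since $\det\MM(\mu)=1$, one then computes
\[
D(-\mu,1)=\det\bigl(\MM(\mu)^{-1}-\Ib\bigr)=\det\bigl(\MM(\mu)^{-1}\bigr)(-1)^{3}\det\bigl(\MM(\mu)-\Ib\bigr)=-D(\mu,1),
\]
which is the claimed oddness.

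For the sign of $D(\mu,1)$ as $\mu\to+\infty$ on the real axis, I would turn to a WKB (Liouville--Green) analysis of the scalar form
\[
v_{xxx}+(f'(u)-c)v_{x}+(\mu+u_{x}f''(u))v=0.
\]
The leading characteristic exponents satisfy $\omega^{3}=-\mu$, which for $\mu>0$ large produces one real negative exponent $\omega_{1}=-\mu^{1/3}$ and a complex conjugate pair $\omega_{2,3}$ with positive real part $\mu^{1/3}/2$. The corresponding Floquet multipliers then behave asymptotically as $\lambda_{1}\sim e^{-\mu^{1/3}T}\to 0^{+}$ and $|\lambda_{2,3}|\sim e^{\mu^{1/3}T/2}\to\infty$, while $\lambda_{1}\lambda_{2}\lambda_{3}=\det\MM(\mu)=1$ forces $\lambda_{2}\lambda_{3}=1/\lambda_{1}$. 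Factoring the characteristic polynomial at $\lambda=1$,
\[
D(\mu,1)=(\lambda_{1}-1)(\lambda_{2}-1)(\lambda_{3}-1)=(\lambda_{1}-1)\bigl[\lambda_{2}\lambda_{3}-(\lambda_{2}+\lambda_{3})+1\bigr],
\]
the bracket is dominated by $\lambda_{2}\lambda_{3}=1/\lambda_{1}=e^{\mu^{1/3}T}$, which dwarfs the oscillatory sum $\lambda_{2}+\lambda_{3}=O(e^{\mu^{1/3}T/2})$; combined with $\lambda_{1}-1\to -1$ this gives $D(\mu,1)\sim -1/\lambda_{1}\to-\infty$, so $\operatorname{sign}D(\mu,1)<0$ eventually.

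The only delicate technical point is justifying the WKB asymptotics in the presence of variable coefficients: diagonalizing $\HM$ at large $\mu$ and applying a Gronwall estimate to the off-diagonal remainder recovers the three Floquet exponents as $\omega_{i}T+O(\mu^{-1/3})$ uniformly on $[0,T]$, which is far more than what is needed to certify the dominance of $\lambda_{2}\lambda_{3}$ over $\lambda_{2}+\lambda_{3}$ above. Everything else in the lemma is purely algebraic, which explains the author's description of it as easily proved.
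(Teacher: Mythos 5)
Your proof is correct, and it assembles essentially the two ingredients the paper attributes (without reproducing them) to Proposition 1 and Lemma 1 of the cited Bronski--Johnson reference: a structural symmetry relating $\MM(-\mu)$ to $\MM(\mu)^{-1}$, and large-$\mu$ exponential asymptotics of the Floquet multipliers. The one genuine difference is the mechanism behind the symmetry. The cited argument derives a functional identity of the form $D(\mu,\lambda)=-\lambda^{3}D(-\mu,\lambda^{-1})$ from the Hamiltonian duality between $\partial_x\mathcal{L}[u]$ and its adjoint $-\mathcal{L}[u]\partial_x$, which requires no symmetry of the profile; you instead exploit the parity $u(-x)=u(x)$ supplied by the normalization $u_x(0)=0$ to get the conjugacy $\MM(-\mu)=S\,\MM(\mu)^{-1}S^{-1}$ with $S=\mathrm{diag}(1,-1,1)$. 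Your route is more elementary and perfectly legitimate here (and harmless in general, since $D(\mu,1)$ is invariant under a change of base point), but it is tied to the reflection symmetry, whereas the Hamiltonian route is not. The large-$\mu$ halves coincide in substance: since $\det\MM(\mu)=1$, your factorization $(\lambda_1-1)\bigl[\lambda_2\lambda_3-(\lambda_2+\lambda_3)+1\bigr]$ with $\lambda_2\lambda_3=1/\lambda_1$ is exactly the identity $D(\mu,1)=\tr\MM(\mu)-\tr\MM(\mu)^{-1}=\tr\MM(\mu)-\tr\MM(-\mu)$, in which the dominant term $e^{\mu^{1/3}T}$ enters with a minus sign — this is the ``asymptotic formula for $\tr(\MM(\mu))$'' the paper alludes to. The Levinson/WKB justification you flag for the variable-coefficient exponents is the standard one and is the only real analytic content; the $O(\mu^{-1/3})$ corrections you quote are indeed far smaller than the $O(\mu^{1/3})$ gap between the real parts of the exponents, so the dominance argument closes.
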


The proof relies on an asymptotic formula for the quantity $\tr(\MM(\mu))$
for $\mu\gg 1$, as well as a structural property of the Evans function which holds due to the Hamiltonian nature
of the problem.  For details, see Proposition $1$ and Lemma $1$ of \cite{BrJ}.  In order to determine
if $D(\mu,1)$ has non-zero real roots, we also need the following technical lemma.

\begin{lem}
The periodic Evans function satisfies the following asymptotic relation near $\mu=0$:
\[
D(\mu,1)=-\frac{1}{2}\{T,M,P\}_{a,E,c}\;\mu^3+\mathcal{O}(|\mu|^4).
\]
\end{lem}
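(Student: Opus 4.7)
The plan is to compute $D(\mu,1)=\det(\MM(\mu)-\IM)$ perturbatively in $\mu$ by constructing three $\mu$-analytic solutions of the spectral problem whose $\mu=0$ limits are the parameter derivatives $u_x,u_a,u_E$ of the wave profile $u(x;a,E,c)$.  Differentiating the profile equation $u_{xx}+f(u)-cu=a$ with respect to $x,a,E$ gives $\partial_x\mathcal{L}[u]u_x=\partial_x\mathcal{L}[u]u_a=\partial_x\mathcal{L}[u]u_E=0$, while differentiating with respect to $c$ yields the crucial identity $\partial_x\mathcal{L}[u]u_c=-u_x$.  Differentiating the identity $u(x+T(a,E,c);a,E,c)=u(x;a,E,c)$ with respect to each parameter produces the quasi-periodicity relations $u_a(x+T)=u_a(x)-T_a u_x(x)$, $u_E(x+T)=u_E(x)-T_E u_x(x)$, $u_c(x+T)=u_c(x)-T_c u_x(x)$, while $u_x$ itself is $T$-periodic.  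In the basis $\{u_x,u_a,u_E\}$ the monodromy matrix at $\mu=0$ is therefore $\IM+N$ with $N$ of rank one; hence $D(0,1)=0$ and the non-trivial information sits at higher order in $\mu$.

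Next, extend the basis to analytic families $\phi_i(x,\mu)=\phi_i^{(0)}(x)+\mu\phi_i^{(1)}(x)+\mu^2\phi_i^{(2)}(x)+\cdots$ with $\phi_1^{(0)}=u_x$, $\phi_2^{(0)}=u_a$, $\phi_3^{(0)}=u_E$, by recursively solving $\partial_x\mathcal{L}[u]\phi_i^{(k)}=\phi_i^{(k-1)}$.  The identity $\partial_x\mathcal{L}[u]u_c=-u_x$ allows one to take $\phi_1^{(1)}=-u_c$, and analogous particular solutions for the other branches are constructed from the generalized kernel of $\partial_x\mathcal{L}[u]$; their antiderivatives are precisely the ingredients assembling the parameter derivatives of $M$ and $P$.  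Substituting these expansions into the translation relation $\phi_i(x+T,\mu)=\sum_j A_{ji}(\mu)\phi_j(x,\mu)$ determines the monodromy $A(\mu)=A(0)+\mu A_1+\mu^2 A_2+\mu^3 A_3+\cdots$ in this basis order by order.  Expanding $D(\mu,1)=\det(A(\mu)-\IM)$, the rank-one structure of $A(0)-\IM$ together with the oddness of $D(\mu,1)$ established in the preceding lemma forces the $\mu$ and $\mu^2$ coefficients to vanish; the coefficient of $\mu^3$ then organizes, after using $K_a=M$, $K_E=T$, $2K_c=P$ and the symmetry of the mixed second partials of $K$, into the $3\times 3$ Jacobian determinant $-\frac{1}{2}\{T,M,P\}_{a,E,c}$.

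The main technical obstacle is the iterated inversion $\partial_x\mathcal{L}[u]\phi_i^{(k)}=\phi_i^{(k-1)}$: the right-hand side is generically not in the range of $\partial_x\mathcal{L}[u]$, so at every step one must subtract off suitable multiples of the $\mu=0$ kernel elements (that is, enforce a compatibility condition with respect to the adjoint kernel) before inverting, and these corrections propagate into the quasi-periodic behavior of the $\phi_i$ at the next order.  Tracking precisely how these compatibility adjustments accumulate under the translation $x\mapsto x+T$, and recognizing the resulting combination of derivatives of $T,M,P$ with respect to $a,E,c$ as the full $3\times 3$ Jacobian rather than a lower-rank object, is where the bookkeeping is most delicate.
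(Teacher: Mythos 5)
Your outline follows the same route as the proof the paper relies on: the paper does not prove this lemma in the text but defers to Theorem 3 of \cite{BrJ}, and that argument is exactly what you describe --- expand the monodromy in $\mu$ over the $\mu=0$ basis $u_x,u_a,u_E$, use the quasi-periodicity relations $u_\bullet(x+T)=u_\bullet(x)-T_\bullet u_x(x)$ and the identity $\partial_x\mathcal{L}[u]u_c=-u_x$, kill the $\mu^0,\mu^1,\mu^2$ coefficients by the rank-one structure of $\MM(0)-\IM$ and oddness, and convert the surviving integrals into entries of the Hessian of $K$ via $K_E=T$, $K_a=M$, $2K_c=P$. Two small remarks: the ``compatibility conditions'' you flag as the main obstacle are not actually needed, since for an ODE the correctors always exist by variation of parameters (periodicity is irrelevant), and the bookkeeping is cleanest via $\MM_\mu(0)=\MM(0)\int_0^T\Phi^{-1}\HM_\mu\Phi\,ds$ and its higher-order analogues; and your proposal stops just short of the step that carries the actual content of the lemma, namely the explicit evaluation of the $\mu^3$ coefficient producing the constant $-\tfrac12$ in front of $\{T,M,P\}_{a,E,c}$, which is precisely the ``technical but straightforward'' computation the paper declines to reproduce.
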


The proof of this lemma is somewhat technical, although straightforward, and will not be reproduced here: see Theorem 3
in \cite{BrJ} for details.  With the above two lemmas, it immediately follows that if $\{T,M,P\}_{a,E,c}$
is negative, then the number of positive roots of $D(\mu,1)$ is odd and hence one has exponential
instability of the underlying periodic traveling wave.  Moreover, we will show in Lemma \ref{periodLem}
that $T_E\geq 0$ implies $\mathcal{L}[u]$ has exactly one negative eigenvalue.  It follows from that
the linearized operator $\partial_x\mathcal{L}[u]$ has at most one unstable eigenvalue with positive real part, counting
multiplicities (see Theorem 3.1 of \cite{PW1}).  Since the spectrum of $\partial_x\mathcal{L}[u]$ is symmetric about the
real and imaginary axis, it follows that all unstable periodic eigenvalues of the
linearized operator must be real.  This proves the following extension of Corollary 1 in \cite{BrJ}.
%  In particular, we now state
%the main theorem of \cite{BrJ} concerning periodic spectral instabilities of the underlying
%periodic wave $u(x;a,E,c)$, which has essentially be proved above.

\begin{thm}\label{orientation}
Let $u(x;a_0,E_0,c_0)$ be a periodic traveling wave solution of \eqref{gkdv}.  %  with $(a_0,E_0,c_0)\in\Omega$.
If $T_E(a_0,E_0,c_0)>0$, then the solution is spectrally stable to perturbations
of the same period if and only if $\{T,M,P\}_{a,E,c}$ is positive at $(a_0,E_0,c_0)$.
\end{thm}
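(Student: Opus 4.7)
My plan is to combine the two lemmas on $D(\mu,1)$ with an eigenvalue count argument, closely paralleling the solitary wave theory outlined in the introduction.

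For the forward direction (spectral stability implies $\{T,M,P\}_{a,E,c} > 0$), I would proceed by contradiction. If $\{T,M,P\}_{a,E,c} < 0$, then the small-$\mu$ expansion $D(\mu,1) = -\frac{1}{2}\{T,M,P\}_{a,E,c}\mu^3 + \mathcal{O}(|\mu|^4)$ yields $D(\mu,1) > 0$ for sufficiently small $\mu > 0$, while the first lemma above gives $D(\mu,1) < 0$ for large $\mu > 0$. The intermediate value theorem then produces a positive real zero of $D(\mu,1)$, contradicting spectral stability. The degenerate case $\{T,M,P\}_{a,E,c} = 0$ is a transitional boundary at which the leading cubic coefficient of $D(\mu,1)$ vanishes; I would treat it as a technical subtlety, noting that generic deformations of the parameters push into the unstable regime.

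For the reverse direction, assume $\{T,M,P\}_{a,E,c} > 0$. Since $T_E > 0$, the lemma announced in the paragraph preceding the theorem statement gives that $\mathcal{L}[u]$ has exactly one negative eigenvalue. I would then invoke the periodic analogue of Theorem 3.1 of \cite{PW1} to bound the total number of zeros of $D(\mu,1)$ in the open right half plane by one, counted with multiplicity. The Hamiltonian symmetries force the spectrum to be invariant under $\mu \mapsto \bar\mu$ and $\mu \mapsto -\bar\mu$, so any non-real unstable eigenvalue is accompanied by its complex conjugate; hence a lone unstable eigenvalue must in fact lie on the positive real axis.

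Now, with $\{T,M,P\}_{a,E,c} > 0$, the two lemmas jointly give $D(\mu,1) < 0$ for small $\mu > 0$ and $D(\mu,1) < 0$ for large $\mu > 0$. Since $D(\cdot,1)$ is continuous and real-valued on $\RM^+$ with matching signs at the two ends, the number of its positive real zeros (with multiplicity) must be even; combined with the at-most-one bound above, this count is zero. Hence $\partial_x \mathcal{L}[u]$ has no unstable periodic eigenvalues and spectral stability follows. The main obstacle, in my view, is justifying the Pego--Weinstein-style eigenvalue count in the periodic setting, since their argument was originally formulated on $L^2(\RM)$ for solitary waves. Adapting the count requires tying the Morse index of $\mathcal{L}[u]$ on the periodic domain to the multiplicity of right-half-plane zeros of the periodic Evans function; once that bound is in hand, the rest is the elementary parity argument above.
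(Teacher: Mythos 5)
Your argument is correct and follows essentially the same route as the paper: the odd number of positive real roots of $D(\mu,1)$ when $\{T,M,P\}_{a,E,c}<0$, the bound of one unstable eigenvalue from the single negative eigenvalue of $\mathcal{L}[u]$ via Theorem 3.1 of \cite{PW1}, the Hamiltonian symmetry forcing any lone unstable eigenvalue to be real, and the parity-of-sign argument showing the count is zero when $\{T,M,P\}_{a,E,c}>0$. The paper likewise cites the Pego--Weinstein count without re-deriving it in the periodic setting, so the obstacle you flag is handled there only by reference.
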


%Notice this theorem gives no information if $\{T,M,P\}_{a,E,c}>0$.  In this case, the topological
%nature of the orientation index
%\[
%D_{\mu\mu\mu}(0,1)\;{\rm sign}\left(\lim_{\mu\to\infty}D(\mu,1)\right)
%\]
%implies there are an even number of periodic eigenvalues on the positive real axis.  Since the number
%of negative periodic eigenvalues of $\mathcal{L}[u]$ bounds the number of periodic eigenvalues
%of $\partial_x\mathcal{L}[u]$ with positive real part (see Theorem 3.1 of \cite{PW1}), it follows that if $\mathcal{L}[u]$
%has one negative periodic eigenvalue then the periodic traveling wave is spectrally unstable
%to perturbations of the same period if and only if $\{T,M,P\}_{a,E,c}<0$.  By Lemma \ref{periodLem},
%this happens precisely when $T_E\geq 0$, which provides a slight extension of Theorem \ref{orientation}.
Notice that if $T_E<0$ the operator $\mathcal{L}[u]$ has two negative eigenvalues and hence if $\{T,M,P\}_{a,E,c}>0$
there is no way of proving from these methods whether the number of periodic eigenvalues
of $\partial_x\mathcal{L}[u]$ with positive real part is equal to zero or two.
In the next section, we will prove that, if $\{T,M,P\}_{a,E,c}$ is positive,
then the periodic traveling wave $u(x;a,E,c)$ is
orbitally stable if and only if the sub-determinants $T_E$ and $\{T,M\}_{a,E}$ are
also positive.  We will show these sub-determinants are positive in certain cases, but we do not
know if this is true in general.  We conjecture, however, that this condition holds for all $(a,E,c)\in\Omega$
in the case of a power-law nonlinearity.

\section{Orbital Stability}

In this section, we prove our main theorem on the orbital stability of periodic traveling wave solutions
of \eqref{gkdv}.  Throughout this section, we assume we have a $T$-periodic traveling wave solution
$u(x;a_0,E_0,c_0)$ of equation \eqref{gkdv}, i.e. we assume $u$ satisfies
\begin{equation}
\frac{1}{2}u_x^2+F(u)-\frac{c_0}{2}u^2-a_0 u=E_0\label{quad0}
\end{equation}
with $(a_0,E_0,c_0)\in\Omega$ and $T=T(a_0,E_0,c_0)$.  Moreover, we assume the non-linearity $f$ present in \eqref{gkdv} is such
that the Cauchy problem for \eqref{travelgkdv} is globally well-posed in a real Hilbert space $X$
of real valued $T$ periodic functions defined on $\RM$, which we equip with the standard $L^2([0,T])$ inner product
\[
\left<g,h\right>:=\int_0^Tg(x)h(x)dx
\]
for all $g,h\in X$, and we identify the dual space $X^*$ through the pairing
\[
\left<g,h\right>_*=\int_0^Tg(x)h(x)dx
\]
for all $g\in X^*$ and $h\in X$.  In particular, notice that $L^2([0,T])$ is required to be a
subspace of $X$.  For example, if $f(u)=u^3/3$, corresponding to the
modified Korteweg-de Vries equation, then the Cauchy problem for \eqref{travelgkdv} is
globally well-posed in the space
\[
H_{\rm per}^s([0,T];\RM)=\{g\in H^s([0,T];\RM):g(x+T)=g(x)\;a.e.\}
\]
for all $s\geq \frac{1}{2}$, where we identify the dual space with $H_{\rm per}^{-s}([0,T];\RM)$ through the
above pairing (see \cite{CKSTT} for proof).  Moreover, due to the structure of the
gKdV, we make the natural assumption that the evolution of \eqref{travelgkdv} in the space $X$ is invariant
under a one parameter group of isometries $G$ corresponding to spatial translation.  Thus, $G$ can be identified
with the real line $\RM$ acting on the space $X$ through the unitary representation
\[
(R_{\xi}g)(x)=g(x+\xi)
\]
for all $g\in X$ and $\xi\in G$.  Since the details
of our proof works regardless of the form of the non-linearity $f$, we make the above additional assumptions on
the nonlinearity and make no other references to the exact structure of the space $X$ nor $f$.

In view of the symmetry group $G$, we now describe precisely what we mean by orbital stability.
We define the G-orbit generated by $u$ to be
\[
\mathcal{O}_{u}:=\{R_\xi u:\xi\in G\}.
\]
Now, suppose we have initial data $\phi_0\in X$ which is close to the orbit $\mathcal{O}_{\varepsilon}$.  By
orbital stability, we mean that if $\phi(\cdot,t)\in X$ is the unique solution with initial data $\phi_0$, then
$\phi(\cdot,t)$ is close to the orbit of $u$ for all $t>0$.  More precisely, we introduce a semi-distance $\rho$
defined on the space $X$ by
\[
\rho(g,h)=\inf_{\xi\in G}\|g-R_{\xi}h\|_{X},
\]
and use this to define an $\varepsilon$-neighborhood of the orbit $\mathcal{O}_{u}$ by
\[
\mathcal{U}_{\varepsilon}:=\{\phi\in X:\rho(u,\phi)<\varepsilon\}.
\]
The main result of this section is the following restatement of Theorem \eqref{os:state}.

\begin{prop}\label{os}
Let $u(x)=u(x;a_0,E_0,c_0)$ solve \eqref{quad0} and suppose the quantities $T_E$, $\{T,M\}_{a,E}$, and $\{T,M,P\}_{a,E,c}$ are positive
at $(a_0,E_0,c_0)$.  Then there exists positive constants $C_0,\varepsilon_0$ such that
if $\phi_0\in X$ satisfies $\rho(\phi_0,u)<\varepsilon$ for some $\varepsilon<\varepsilon_0$, then the solution $\phi(x,t)$
of \eqref{gkdv} with initial data $\phi_0$ satisfies $\rho(\phi(\cdot,t),u)\leq C_0\varepsilon$.
\end{prop}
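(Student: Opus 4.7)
The plan is to adapt the classical variational stability framework of Benjamin, Bona, Weinstein, and Grillakis--Shatah--Strauss to the present periodic setting, where three conserved quantities $H$, $M$, $P$ are simultaneously available rather than the two used in the solitary-wave theory. I would introduce the augmented Hamiltonian
\[
\mathcal{E}(v) \;:=\; H(v) + \frac{c_0}{2}P(v) + a_0 M(v),
\]
which is conserved along the gKdV flow on $X$ and, by \eqref{quad1}, has $u$ as a critical point with Hessian $\mathcal{E}''[u] = \mathcal{L}[u] = -\partial_x^2 - f'(u) + c_0$. Under the hypothesis $T_E(a_0,E_0,c_0) > 0$, the lemma invoked in the statement of Theorem~\ref{orientation} guarantees that $\mathcal{L}[u]$ on $X$ has exactly one negative eigenvalue and a one-dimensional kernel spanned by the translation mode $u_x$. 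For $\phi$ close to $\mathcal{O}_u$, an implicit function theorem argument produces a unique translation $\xi(\phi)\in\RM$, smooth in $\phi$, such that $w := R_{-\xi(\phi)}\phi - u$ is $L^2$-orthogonal to $u_x$ with $\|w\|_X \le C\rho(\phi,u)$. Applying this modulation at each time $t$ to the solution of \eqref{gkdv}, Taylor-expanding $\mathcal{E}$ about $u$, and using conservation of $\mathcal{E}$, $M$, and $P$ yields
\[
\tfrac{1}{2}\langle \mathcal{L}[u]\,w(t), w(t)\rangle = O(\varepsilon^2), \qquad \langle w(t), 1\rangle = O(\varepsilon), \qquad \langle w(t), u\rangle = O(\varepsilon).
\]

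The crux of the argument is a coercivity estimate: there exists $\alpha>0$ with $\langle \mathcal{L}[u] v, v\rangle \ge \alpha\|v\|_X^2$ for every $v \in X$ satisfying $\langle v,u_x\rangle = \langle v,1\rangle = \langle v,u\rangle = 0$. Granting coercivity, the splitting $w(t) = w_\parallel(t) + w_\perp(t)$, with $w_\parallel$ a linear combination of the three constraint directions having $O(\varepsilon)$ coefficients and $w_\perp$ in the coercive subspace, immediately gives $\|w(t)\|_X \le C_0\varepsilon$ uniformly in $t$. Because $\mathcal{L}[u]$ has a single negative eigenvalue (thanks to $T_E>0$) and a one-dimensional kernel, the abstract Weinstein/Grillakis--Shatah--Strauss spectral lemma reduces coercivity to the single sign condition $\det\mathbf{D}<0$ on the $2\times 2$ matrix
\[
\mathbf{D} \;:=\; \Bigl(\bigl\langle \mathcal{L}[u]^{-1}\chi_i,\,\chi_j\bigr\rangle\Bigr)_{i,j=1,2}, \qquad \chi_1 = 1,\ \chi_2 = u,
\]
with $\mathcal{L}[u]^{-1}$ interpreted on $u_x^\perp \cap X$. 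Differentiating \eqref{quad1} in $a$ and $c$ yields $\mathcal{L}[u]\,u_a = -1$ and $\mathcal{L}[u]\,u_c = -u$, so informally $\mathcal{L}[u]^{-1}\cdot 1 = -u_a$ and $\mathcal{L}[u]^{-1}\cdot u = -u_c$. After adding the multiples of $x\,u_x$ required to restore $T$-periodicity and invoking the classical-action identities $K_E=T,\ K_a=M,\ 2K_c=P$ together with the symmetry of mixed partials, the sign of $\det\mathbf{D}$ should reduce to the Jacobian condition $\{T,M\}_{a,E}>0$. The remaining hypothesis $\{T,M,P\}_{a,E,c}>0$ is the spectral-stability condition of Theorem~\ref{orientation}, a necessary prerequisite for orbital stability, and via \eqref{overdetermined} corresponds to non-degeneracy of the map $(a,E,c)\mapsto(H,M,P)$, which underwrites consistency of the modulation when one prefers a Bona--Souganidis--Strauss-type four-parameter decomposition matching all three conserved quantities exactly.

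The main obstacle I expect is the coercivity step, specifically the identification of $\det\mathbf{D}$ with a Jacobian of $(T,M)$ in $(a,E)$. The formal inverses $\mathcal{L}[u]^{-1}\cdot 1 = -u_a$ and $\mathcal{L}[u]^{-1}\cdot u = -u_c$ are not elements of $X$: the relations $u_a(x+T) - u_a(x) = -T_a\,u_x(x)$ and $u_c(x+T) - u_c(x) = -T_c\,u_x(x)$, reflecting the dependence of the period on $a$ and $c$, show that $u_a$ and $u_c$ pick up linear-in-$x$ drifts under a period shift. The correction by $x\,u_x$ terms, the computation $\mathcal{L}[u](x\,u_x) = -2u_{xx}$, and the substitution $u_{xx} = c_0 u + a_0 - f(u)$ from \eqref{quad1} must all be tracked carefully before the identification of the entries of $\mathbf{D}$ with partial derivatives of $(T,M,P)$ in $(a,E,c)$ becomes manifest. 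A secondary technical concern is maintaining uniformity of the coercivity constant $\alpha$ on a full neighborhood of $u$ in $X$ in order to close a bootstrap in time.
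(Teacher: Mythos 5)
Your overall architecture (augmented functional with all three conserved quantities, modulation by translation via the implicit function theorem, Taylor expansion plus conservation of $\mathcal{M}$ and $\mathcal{P}$ to reduce everything to coercivity of $\langle\mathcal{L}[u]\cdot,\cdot\rangle$ on $\{1,u,u_x\}^\perp$, and adjustment of $(a,E,c)$ when $\phi_0\notin\Sigma_0$) is exactly the paper's. Where you diverge is the coercivity step, and that is precisely where your sketch has a concrete error. The paper does \emph{not} compute the full $2\times 2$ constraint matrix $\mathbf{D}$; it exhibits the single test function $\phi_0=\{u,T,M\}_{a,E,c}$, which is a periodic combination of $u_a,u_E,u_c$ (periodicity is restored by the $u_E$-component, \emph{not} by $x\,u_x$ corrections --- note $\mathcal{L}[u](xu_x)=-2u_{xx}\neq 0$, so adding $xu_x$ destroys the relations $\mathcal{L}w=-1$, $\mathcal{L}w=-u$ that you need; the correct periodic preimages are $u_a-(T_a/T_E)u_E$ and $u_c-(T_c/T_E)u_E$). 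This $\phi_0$ satisfies $\mathcal{L}[u]\phi_0\in\operatorname{span}\{1,u\}$ and $\langle\mathcal{L}[u]\phi_0,\phi_0\rangle=-\{T,M\}_{a,E}\{T,M,P\}_{a,E,c}<0$, and a single Cauchy--Schwarz argument against the unique negative mode then gives positivity on the constrained subspace. It is exactly this one-direction argument that forces the extra hypothesis $\{T,M\}_{a,E}>0$ into the statement.

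The genuine gap in your version is the claim that $\det\mathbf{D}$ ``should reduce to'' $\{T,M\}_{a,E}$. It does not. Carrying out the computation with the correct periodic preimages, one finds $\mathbf{D}_{11}=-\{M,T\}_{a,E}/T_E$, $\mathbf{D}_{12}=\mathbf{D}_{21}=-\tfrac12\{P,T\}_{a,E}/T_E=-\{M,T\}_{c,E}/T_E$, $\mathbf{D}_{22}=-\tfrac12\{P,T\}_{c,E}/T_E$, and the Desnanot--Jacobi determinant identity applied to the Hessian $D^2_{a,E,c}K$ collapses $\det\mathbf{D}$ to a constant multiple of $\{T,M,P\}_{a,E,c}/T_E$ --- the individual $2\times2$ minors cancel out of the determinant. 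So the sign condition $\det\mathbf{D}<0$ is equivalent to $T_E\{T,M,P\}_{a,E,c}>0$ and involves $\{T,M\}_{a,E}$ not at all. Ironically, this means that if you complete your route correctly you prove a \emph{stronger} theorem than the paper's (the hypothesis $\{T,M\}_{a,E}>0$ becomes superfluous, which is indeed how the result was later sharpened in the index-theorem literature); but as written, the identification you propose is false, and since you explicitly defer it (``should reduce to''), the crux of the coercivity argument is missing. Fix it either by doing the $\mathbf{D}$ computation with $u_E$-corrections and the Jacobi identity, or by switching to the paper's single test function $\phi_0$ and accepting the extra hypothesis.
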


\begin{rem}\label{rem:unstable}
Notice that Theorem \ref{orientation} implies a periodic solution $u(x;a_0,E_0,c_0)$ of \eqref{travelode}
is an exponentially unstable solution of \eqref{gkdv} if $\{T,M,P\}_{a,E,c}$ is negative at $(a_0,E_0,c_0)$.
Thus, the positivity of this Jacobian is a necessary condition for nonlinear stability.
\end{rem}

Before we prove Proposition \ref{os} we wish to shed some light on the hypotheses.  Recall that the classical
action $K(a,E,c)$ of the periodic traveling wave satisfies
\[
\nabla_{a,E,c}K(a,E,c)=\left(M(a,E,c),T(a,E,c),\frac{1}{2}P(a,E,c)\right).
\]
As a result, we can write its Hessian as
\[
D^2_{a,E,c}K(a,E,c)=\left(
                      \begin{array}{ccc}
                        M_a & M_E & M_c \\%T_E & T_a & T_c \\
                        T_a & T_E & T_c \\
                        P_a & P_E & P_c \\
                      \end{array}
                    \right).
\]
Proposition \ref{os} thus states that if $(a_0,E_0,c_0)\in\Omega$, the corresponding periodic traveling
wave solutions of \eqref{gkdv} is orbitally stable if the principle minor determinants of $D^2_{a,E,c}K(a,E,c)$ satisfy
$d_1=T_E>0$, $d_2=\{M,T\}_{a,E}<0$, and $d_3=\{M,T,P\}_{a,E,c}<0$.   It is clear that a necessary condition
for this claim is that the Hessian $D^2_{a,E,c}K(a,E,c)$ is invertible with precisely one negative eigenvalue.
However, this is clearly not sufficient.  Although we are unable to give a complete characterization of the
orbital stability in terms of orthogonal invariants of $D^2_{a,E,c}K(a,E,c)$, such a connection
would indeed be very interesting.

We now proceed with the proof of Proposition \ref{os}, which follows the general method of Bona, Souganidis and Strauss \cite{BSS}, and
consequently that of Grillakis, Shatah and Strauss \cite{GSSI},\cite{GSSII}.
We define the following functionals on the space $X$, which correspond to the
``energy", ``mass" and ``momentum" respectively:
\begin{align*}
\mathcal{E}(\phi)&:=\int_0^T\left(\frac{1}{2}\phi_x(x)^2-F(\phi(x))\right)dx\\
\mathcal{M}(\phi)&:=\int_0^T\phi(x)\;dx\\
\mathcal{P}(\phi)&:=\frac{1}{2}\int_0^T\phi(x)^2dx.
\end{align*}
These functionals represent conserved quantities of the flow generated
by \eqref{gkdv}.  In particular, if $\phi(x,t)$ is a solution of \eqref{gkdv} of
period $T$, then the quantities $\mathcal{E}(\phi(\cdot,t))$, $\mathcal{M}(\phi(\cdot,t))$,
and $\mathcal{P}(\phi(\cdot,t))$ are constants in time.  Moreover,
notice that $\mathcal{E}(u)=H(a_0,E_0,c_0)$, $\mathcal{M}(u)=M(a_0,E_0,c_0)$,
and $\mathcal{P}(u)=P(a_0,E_0,c_0)$ where $H$, $M$ and $P$ are defined in \eqref{mass}-\eqref{energy}.

\begin{rem}
Throughout the remainder of this paper, the symbols $M$ and $P$ will denote the functionals
$\mathcal{M}$ and $\mathcal{P}$ restricted to the manifold of periodic traveling wave solutions
of \eqref{gkdv} with $(a,E,c)\in\Omega$.
\end{rem}

\begin{rem}\label{kdv:rem}
Calculations in similar vain have been carried out recently in the special cases of cnoidal
solutions of the KdV \cite{PBS}, as well as for traveling wave solutions of the modified KdV
arising from \eqref{gkdv} with $f(u)=u^3$ \cite{Pava}.  In each of these cases, however, it was
assumed that $a=0$, or equivalently that $M(a,E,c)=0$.  While this is always possible
for the KdV (due to Galilean invariance), this is not possible for general nonlinearities
without restricting your admissible class of traveling wave solutions, i.e. restricting $\Omega$.
As we are interested in deriving universal conditions for stability of traveling wave solutions of \eqref{gkdv},
we are forced to work with all three functionals defined above.
\end{rem}

It is easily verified that $\mathcal{E}$, $\mathcal{M}$ and $\mathcal{P}$
are smooth functionals on $X$, whose first derivatives are smooth maps from $X$ to $X^*$ defined by
\[
\mathcal{E}'(\phi)=-\phi_{xx}-f(\phi),\;\;\mathcal{M}'(\phi)=1,\;\;\mathcal{P}'(\phi)=\phi.
\]
If we now define an augmented energy functional on the space $X$ by
\begin{equation}
\mathcal{E}_0(\phi):=\mathcal{E}(\phi)+c_0\mathcal{P}(\phi)+a_0\mathcal{M}(\phi)+E_0 T\label{energy2}
\end{equation}
it follows from \eqref{quad0} that $\mathcal{E}_0(u)=0$ and $\mathcal{E}_0'(u)=0$.  Hence, $u$ is a critical point of the
functional $\mathcal{E}_0$.

\begin{rem}
Notice that the added factor of $E_0 T$ on the right hand side of \eqref{energy2} is not technically
needed for our calculation.  However, we point out that (formally) if we consider variations
in $\mathcal{E}_0$ in the {\it period} we obtain
\begin{align*}
\frac{\partial}{\partial T}\mathcal{E}_0(\phi)\big{|}_{\phi=u}
      &=\frac{1}{2}u_x^2(T)-F(u(T))+au(T)+E+\left<\mathcal{E}_0'(u),\frac{\partial u}{\partial T}\right>\\
      &=\frac{1}{2}u_x^2(T)+\frac{1}{2}u_x^2(T)+\left<\mathcal{E}_0'(u),\frac{\partial u}{\partial T}\right>\\
&=0
\end{align*}
since $u_x(T)=0$ and $\mathcal{E}_0'(u)=0$.
Hence $u$ is also (formally) a critical point of the modified energy with respect to variations in the period.
It would be very interesting to try to make this calculation rigorous and to see if it allows
one to extend orbital stability results to include perturbations with period {\rm close} to the period
of the underlying periodic wave.  Again, this is clearly all very formal and we will make no
attempt at such a theory here.
\end{rem}

To determine the nature of this critical point, we consider its second
derivative $\mathcal{E}_0''$, which is a smooth map from $X$ to $\mathcal{L}(X,X^*)$ defined by
\[
\mathcal{E}_0''(\phi)=-\phi_{xx}-f'(\phi)+c_0.
\]
This formula immediately follows by noticing the second derivatives of the mass, momentum, and energy
functionals are smooth maps from $X$ to $\mathcal{L}(X,X^*)$ defined by
\[
\mathcal{E}''(\phi)=-\partial_x^2-f'(\phi),\;\;\mathcal{M}''(\phi)=0,\;\;\mathcal{P}''(\phi)=1.
\]
In particular, notice the second derivative of the augmented energy functional $\mathcal{E}_0$ at the
critical point $u$ is precisely linear operator $\mathcal{L}[u]$
arising from linearizing \eqref{travelgkdv} with wave speed $c_0$ about $u$.
It follows from the comments in the previous section that $\mathcal{E}_0''(u)$ is a self-adjoint
linear operator on $L_{\rm per}^2([0,T];\RM)$ with compact resolvent.
In order to classify $u$ as a critical point of $\mathcal{E}_0$, we must understand
the nature of the spectrum of the second variation $\mathcal{L}[u]$: in particular, we need
to know the number of negative eigenvalues.  This is handled in the following lemma.

\begin{lem}\label{specL}
The spectrum of the operator $\mathcal{L}[u]$ considered on the space $L_{\rm per}^2([0,T])$ satisfies the following
trichotomy:
\begin{enumerate}
  \item[(i)] If $T_E>0$, then $\mathcal{L}[u]$ has exactly one negative eigenvalue, a simple eigenvalue at zero, and the
  rest of the spectrum is strictly positive and bounded away from zero.
  \item[(ii)] If $T_E=0$, then $\mathcal{L}[u]$ has exactly one negative eigenvalue, a double eigenvalue at zero, and the
  rest of the spectrum is strictly positive and bounded away from zero.
  \item[(iii)] If $T_E<0$, then $\mathcal{L}[u]$ has exactly two negative eigenvalues, a simple eigenvalue at zero, and the
  rest of the spectrum is strictly positive and bounded away from zero.
\end{enumerate}
%The operator $\mathcal{L}[u]$ considered on the space $L_{\rm per}^2([0,T])$ has precisely one negative eigenvalue
%which is simple, generically one zero eigenvalue, and the rest of the spectrum is positive and
%bounded away from the origin.  In particular, the eigenvalue at zero is simple if and only if $T_E$ is non-zero at $(a_0,E_0,c_0)$.
\end{lem}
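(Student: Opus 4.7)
The plan is to combine Sturm--Floquet oscillation theory with an explicit computation of the Floquet monodromy of $\mathcal{L}[u]\phi=0$. Since $\mathcal{L}[u]$ is a one-dimensional Schr\"odinger operator with smooth $T$-periodic potential acting on $L^2_{\rm per}([0,T])$, it is self-adjoint with compact resolvent; its spectrum is a sequence $\lambda_0<\lambda_1\leq\lambda_2<\lambda_3\leq\lambda_4<\cdots$ of real eigenvalues tending to $+\infty$, with $\lambda_0$ always simple. Locating $0$ inside this list will immediately yield the ``bounded away from zero'' statement for the remaining spectrum.

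Differentiating the profile equation \eqref{quad1} in $x$ gives $\mathcal{L}[u]u_x=0$, so $u_x$ is a periodic null vector. Since $u_x$ has exactly two zeros per period, occurring at the turning points $u_\pm$, Sturm oscillation for periodic boundary conditions places $0\in\{\lambda_1,\lambda_2\}$. In particular $\lambda_0<0$ always, and the count of negative eigenvalues is $1$ or $2$ depending on whether $0=\lambda_1$ or $0=\lambda_2$, with the multiplicity of zero being two precisely when $\lambda_1=\lambda_2=0$.

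To decide the Jordan structure at $0$, I would produce a second, generally non-periodic solution of $\mathcal{L}[u]\phi=0$ by differentiating the profile equation in $E$, which yields $\mathcal{L}[u](\partial_E u)=0$. Differentiating the periodicity identity $u(x+T(a,E,c);a,E,c)=u(x;a,E,c)$ in $E$ and using $T$-periodicity of $u_x$ produces the cocycle relation
\[
(\partial_E u)(x+T)=(\partial_E u)(x)-T_E\,u_x(x).
\]
Combined with $u_x(0)=0$, $(\partial_E u)'(0)=0$ (from evenness of $u$), and $u_{xx}(0)=-V'(u_-)>0$, this shows that the scalar monodromy of $\mathcal{L}[u]\phi=0$ is the identity when $T_E=0$ and a non-trivial unipotent Jordan block otherwise. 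Case (ii) is then immediate: $T_E=0$ makes $u_x$ and $\partial_E u$ span a two-dimensional periodic null space, forcing $\lambda_1=\lambda_2=0$.

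The main obstacle, and the heart of the argument, is to decide between $0=\lambda_1$ and $0=\lambda_2$ when $T_E\neq 0$. My plan is to analyze the Floquet discriminant $\Delta(\lambda)$, the trace of the scalar monodromy of $\mathcal{L}[u]\phi=\lambda\phi$; the band structure of Hill's equation forces $\Delta'(\lambda_1)>0$ and $\Delta'(\lambda_2)<0$ at simple periodic eigenvalues. Using variation of parameters with $u_x$ and $\partial_E u$ as a fundamental system at $\lambda=0$, a direct computation yields
\[
\Delta'(0)=-\frac{T_E\int_0^T u_x^2\,dx}{u_{xx}(0)\,\partial_E u_-}.
\]
Since $u_{xx}(0)>0$ while $\partial_E u_-=1/V'(u_-)<0$, one has $\sgn\Delta'(0)=\sgn T_E$. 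Therefore $T_E>0$ forces $0=\lambda_1$ (case (i), the sole negative eigenvalue being $\lambda_0$), while $T_E<0$ forces $0=\lambda_2$ (case (iii), with $\lambda_0,\lambda_1<0$), closing the trichotomy.
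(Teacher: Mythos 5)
Your proposal is correct and follows essentially the same route as the paper: $u_x$ in the kernel together with periodic Sturm--Liouville oscillation theory places zero as the second or third eigenvalue, the monodromy of $\mathcal{L}[u]\phi=0$ built from $u_x$ and $u_E$ shows zero is simple iff $T_E\neq 0$, and the sign of the derivative of the Floquet discriminant at zero resolves the remaining dichotomy. The only difference is that you derive the key identity $\sgn\Delta'(0)=\sgn T_E$ directly by variation of parameters rather than citing it (as ${\rm sign}(T_E)={\rm sign}(\tr(\mm_\mu(0)))$, equation \eqref{krein}) from \cite{BrJ}; your formula checks out, since $u_{xx}(0)\,\partial_E u_-=-1$ reduces it to $\Delta'(0)=T_E\int_0^T u_x^2\,dx$.
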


\begin{proof}
This is essentially a consequence of the translation invariance of \eqref{gkdv} and the Strum-Liouville oscillation theorem.
Indeed, notice that for any $\xi\in G$ the function $R_\xi u$ is a stationary solution of \eqref{quad1} with
wave speed $c_0$ and $a=a_0$.  Differentiating this relation with respect to $\xi$ and evaluating at $\xi=0$
implies that $\mathcal{L}[u]u_x=0$.  Moreover, since $u$ is radially increasing on $[0,T]$
from its local minimum there, $u_x$ is periodic with the same period as $u$ and hence $u_x\in L_{\rm per}^2([0,T])$.
This proves that zero is always a periodic eigenvalue of $\mathcal{L}[u]$ as claimed.
To see there is exactly one negative eigenvalue, notice that
since $u$ is $T$-periodic with precisely one local critical point on $(0,T)$, its derivative $u_x$ must have precisely
one sign change over its period.  By standard Strum-Liouville theory applied to the periodic problem (see Theorem 2.14 in \cite{MW}),
it follows that zero must be the either the second or third\footnote{Clearly, we mean with respect to the natural ordering on $\RM$.}
eigenvalue of $\mathcal{L}[u]$ considered on the space $L_{\rm per}^2(\RM)$.

Next, we show that zero is a simple eigenvalue of $\mathcal{L}[u]$ on the space $L_{\rm per}^2([0,T])$ if
and only if $T_E\neq 0$.  To this end,
notice that the periodic traveling wave solutions of \eqref{quad1} are invariant under changes in the energy parameter
$E$ associated with the Hamiltonian ODE \eqref{travelode}.
As above, it follows that $\mathcal{L}[u]u_E=0$.  We must now determine whether the function $u_E$ belongs
to the space $L_{\rm per}^2([0,T])$.  Since it is clearly smooth, we must only check whether it is periodic
with the same period as the underlying wave $u$.  To this end, we use $u_x$ and $u_E$ as a basis to compute the monodromy
matrix $\mm(0)$ corresponding to the equation $\mathcal{L}[u]v=0$.  Notice that
differentiating the relation $E=V(u_{-};a,c)$ with respect to $E$ and evaluating at $(a_0,E_0,c_0)$ gives
$\frac{\partial u_{-}}{\partial E}V'(u_{-};a_0,c_0)=1$, and hence
$\frac{\partial u_{-}}{\partial E}$ is non-zero at $(a_0,E_0,c_0)$.
Defining $y_1(x)=\left(\frac{d u_{-}}{dE}\right)^{-1}u_{E}$
and $y_2(x)=-\left(V'(u_{-};a_0,c_0)\right)^{-1}u_{x}(x)$, it follows from direct calculation that
\[
\begin{array}{cc}
  y_1(0)=1, & y_2(0)=0, \\
  y_1'(0)=0, & y_2'(0)=1.
\end{array}
\]
Thus, it follows by calculating $u_{E}(T)$ by the chain rule that we have
\[
\mm(0)=\left(
         \begin{array}{cc}
           1 & T_E\\%V'(u_{-};a_0,c_0)\frac{d u_{-}}{d E}T_E \\
           0 & 1 \\
         \end{array}
       \right),
\]
where again we have used the fact that $V'(u_{-};a,c)\frac{\partial u_{-}}{\partial E}=1$.  Thus, it follows
that zero is a simple eigenvalue of $\mathcal{L}[u]$ if and only if $T_E\neq 0$, and that the multiplicity
will be two in the case $T_E=0$.

Finally, to determine whether $\mu=0$ is the second or third eigenvalue of $\mathcal{L}[u]$, we note
that from the results of \cite{BrJ} we have
\begin{equation}
{\rm sign}(T_E)={\rm sign}\left(\tr(\mm_{\mu}(0))\right).\label{krein}
\end{equation}
By the oscillation theorem for Hill-operators with periodic coefficients (see Theorem 2.1 in \cite{MW}),
it follows that $\mu=0$ is the second periodic eigenvalue of $\mathcal{L}[u]$ if and only if $T_E\geq 0$, and the third
periodic eigenvalue if and only if $T_E<0$.  This completes the proof.
\end{proof}

Notice that in the solitary wave case, the spectrum of the operator $\mathcal{L}[u]$ {\it always}
satisfies $(i)$ in the above trichotomy.  Since $E$ is not restricted to be zero in the periodic
context, it is not surprising that such a non-trivial trichotomy might exist.
The next lemma shows that for a large class of nonlinearities, the period is indeed an increasing function
of $E$ within the region $\Omega$.  %In particular, it follows that if $f(u)=u^{p+1}$ for some $p\geq 1$
%then the spectrum of $\mathcal{L}[u]$ will satisfy ({\it i}) in Lemma \eqref{specL}.

\begin{lem}\label{periodLem}
Let $(a_0,E_0,c_0)\in\Omega$ and $u=u(\;\cdot\;;a_0,E_0,c_0)$ denote the corresponding periodic solution
of \eqref{travelode} with wave speed $c_0$ and period $T=T(a_0,E_0,c_0)$.  If the nonlinearity $f$ in \eqref{gkdv}
is such that $f'(u)$ is co-periodic with $u$, then $T_E>0$ at $(a_0,E_0,c_0)$.
\end{lem}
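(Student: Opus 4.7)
The plan is to translate the claim into a spectral statement via Lemma \ref{specL} and then verify it using the symmetries of $u$. By our normalization $u$ is even about $x = 0$, and by the structure of the orbit $u$ is also even about $x = T/2$; hence the coefficient $c - f'(u(x))$ of $\mathcal{L}[u]$ is even about both points. A short argument shows that any $T$-periodic function even about $x = 0$ is automatically even about $x = T/2$, and similarly for odd functions, so the periodic spectrum of $\mathcal{L}[u]$ on $[0, T]$ is the disjoint union of the Neumann-Neumann (NN) and Dirichlet-Dirichlet (DD) spectra on the half-interval $[0, T/2]$.

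The function $u_x$ is odd about both endpoints of $[0, T/2]$, satisfies $\mathcal{L}[u] u_x = 0$, and has no interior zero in $(0, T/2)$; by Sturm oscillation it is therefore the DD ground state, giving $\tau_0^{DD} = 0$ with all higher DD eigenvalues strictly positive. The lowest $T$-periodic eigenvalue $\lambda_0$ has a nodeless eigenfunction and is thus EE, so $\lambda_0 = \tau_0^{NN}$. Up to scale, the unique solution of $\mathcal{L}[u] v = 0$ which is even about $x = 0$ is $u_E$, and the endpoint values $u_E(0) = 1/V'(u_-) < 0$ and $u_E(T/2) = 1/V'(u_+) > 0$ force $u_E$ to change sign on $(0, T/2)$; hence $u_E$ is not nodeless, and a direct check using the Neumann condition at $x = T/2$ rules out $\tau_0^{NN} = 0$, giving $\tau_0^{NN} < 0$ strictly. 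Combined with the trichotomy of Lemma \ref{specL}, the claim $T_E > 0$ therefore reduces to the positivity of the second Neumann eigenvalue: $\tau_1^{NN} > 0$.

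To establish $\tau_1^{NN} > 0$ under the co-periodicity hypothesis, I would argue by contradiction. If $\tau_1^{NN} \leq 0$, the corresponding NN eigenfunction $\psi$ has exactly one interior zero on $(0, T/2)$. Consider the translated function $\tilde\psi(x) = \psi(x + T/2)$, which is again EE: a direct computation gives
\[
\mathcal{L}[u]\tilde\psi = \tau_1^{NN} \tilde\psi + [f'(u(x+T/2)) - f'(u(x))]\tilde\psi,
\]
so $\tilde\psi$ is itself an eigenfunction precisely when $f'(u)$ has period $T/2$. The hypothesis excludes this, and one then plays the EE symmetry of $\tilde\psi$ against the one-dimensional kernel of $\mathcal{L}[u]$ on the OO subspace (spanned by $u_x$) to derive a contradiction. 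The main obstacle is closing this contradiction cleanly: the integral identity $V'(u_-) T_E = \int_0^T (c - f'(u)) u_E\,dx$ (which follows from $(c - f'(u)) u_E = u_{xxE}$ by integrating twice) and the monodromy-trace formula $\tr(\mm_\mu(0)) = (V'(u_-))^2 T_E \int_0^T y_2(s)^2\,ds$ (with $y_2 = -u_x/V'(u_-)$, obtained by variation of parameters on the linearized eigenvalue problem) are both natural tools, but the decisive extraction of sign from co-periodicity is where the essential work lies.
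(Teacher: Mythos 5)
Your reduction is correct and is in fact the same framework the paper uses: the evenness of the potential about $x=0$ and $x=T/2$ splits the $T$-periodic spectrum into the NN and DD spectra on $[0,T/2]$; $u_x$ is the DD ground state, so $\tau_0^{DD}=0$; the lowest periodic eigenvalue is of Neumann type; and the lemma is equivalent to $\tau_1^{NN}>0$ (your identification of $u_E$ as the even null solution, with $u_E(0)=1/V'(u_-)<0$ and $u_E(T/2)=1/V'(u_+)>0$, matches the monodromy computation in the proof of Lemma \ref{specL}). But the decisive step is exactly the one you leave open, and the route you sketch for it does not close. The translation $\tilde\psi(x)=\psi(x+T/2)$ yields only the negative statement that $\tilde\psi$ is \emph{not} an eigenfunction when $f'(u)$ fails to have period $T/2$, and there is no evident way to turn that into a contradiction by comparing symmetry types against the kernel spanned by $u_x$. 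As written, your argument establishes only that $0\in\{\lambda_1,\lambda_2\}$, which is already the content of Lemma \ref{specL} and does not decide the sign of $T_E$.

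The paper closes the gap with Dirichlet--Neumann bracketing on the half-interval: since the periodic ground state is a Neumann eigenfunction on $[0,T/2]$ and $0=\tau_0^{DD}$ is the lowest Dirichlet eigenvalue there, the comparison $\tau_0^{NN}<\tau_0^{DD}\leq\tau_1^{NN}$ forces $\mu=0$ to be the \emph{second} periodic eigenvalue, whence $\tr(\mm_{\mu}(0))>0$ and $T_E>0$ by \eqref{krein}. The sign information you are missing therefore comes not from co-periodicity per se but from the variational comparison of the Dirichlet and Neumann problems on the half-period; the co-periodicity hypothesis serves only to guarantee that $\mathcal{L}[u]$ is a Hill operator whose potential has the full period $T$ and the stated reflection symmetries. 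If you replace the translation trick by this bracketing inequality your proof goes through; note, however, that passing from $\tau_1^{NN}\geq 0$ to the strict conclusion $T_E>0$ still requires excluding $\tau_1^{NN}=0$, i.e. excluding that the even null solution $u_E$ is itself $T$-periodic, a point worth making explicit in either version of the argument.
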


\begin{proof}
If $f'(u)$ is co-periodic with $u$, then the operator $\mathcal{L}[u]$ is a Hill operator with
even potential with period $T=T(a_0,E_0,c_0)$.  Thus, $u_x$ is a periodic eigenvalue of $\mathcal{L}[u]$
which satisfies Dirichlet boundary conditions.  Moreover, since $u_x$ changes signs once over
$(0,T)$, it follows that zero is either the second or third periodic eigenvalue of $\mathcal{L}[u]$.
Since the first periodic eigenvalue must be even, and hence must satisfy Neumann boundary conditions, it follows
by Dirichlet-Neumann bracketing \cite{RS} that zero must be the second periodic eigenvalue of $\mathcal{L}[u]$.
Using the notation of Lemma \ref{specL}, it follows that $\tr(\mm_{\mu}(0))>0$ and hence $T_E>0$
by equation \eqref{krein}.
\end{proof}

\begin{rem}
In particular, it follows that if $f(u)=u^{p+1}$ for some $p\geq 1$
then the spectrum of $\mathcal{L}[u]$ will satisfy ({\it i}) in Lemma \ref{specL}.
\end{rem}

Throughout the rest of the paper, unless otherwise stated, we will assume that $T_E>0$ at $(a_0,E_0,c_0)$ and
hence zero is a simple eigenvalue of the operator $\mathcal{L}[u]$
considered on the space $L_{\rm per}^2(\RM)$.  In particular, we assume that the map $E\to T(E,a_0,c_0)$ does
not have a critical point at $E_0$.  It follows that we can define the spectral projections $\Pi_{-}$, $\Pi_{0}$
and $\Pi_{+}$ onto the negative, zero, and positive subspaces of the operator $\mathcal{L}[u]$
(respectively) via the Dunford calculus.  Thus, any $\phi\in X$ can be decomposed as a linear
combination of %elements of the periodic null space of $\mathcal{L}[u]$
$u_x$, an element in the positive subspace of $\mathcal{L}[u]$,
and $\chi$, where $\chi$ is the unique positive eigenfunction of $\mathcal{L}[u]$ with $\|\chi\|_{L^2([0,T])}=1$
which satisfies
\[
\left<\mathcal{L}[u]\chi,\chi\right>=-\lambda^2
\]
for some $\lambda>0$.  From the above definition of $\chi$ it follows that $\chi$ is the eigenfunction corresponding to the
unique negative eigenvalue $-\lambda^2$ of $\mathcal{L}[u]$.

From Lemma \ref{specL}, we know that $u$ is a degenerate saddle point of the functional $\mathcal{E}_0$
on $X$, with one unstable direction and one neutral direction.  In order to get rid of the unstable direction, we note that
the evolution of \eqref{gkdv} does not occur on the entire space $X$, but on the co-dimension two submanifold defined by
\[
\Sigma_0:=\{\phi\in X:\mathcal{M}(\phi)=M(a_0,E_0,c_0),\;\mathcal{P}(\phi)=P(a_0,E_0,c_0)\}.
\]
It is clear that $\Sigma_0$ is indeed a smooth submanifold of $X$ in a neighborhood of the group orbit $\mathcal{O}_u$.
Moreover, the entire orbit $\mathcal{O}_u$ is contained in $\Sigma_0$.  The main technical
result needed for this section is that the functional $\mathcal{E}_0$ is coercive on $\Sigma_0$ with
respect to the semi-distance $\rho$, which is the content of the following proposition.

\begin{prop}\label{coercive}
If each of the quantities $T_E$, $\{T,M\}_{a,E}$, and $\{T,M,P\}_{a,E,c}$ are positive, then there exists positive
constants $C_1$, $\delta$ which depend on $(a_0,E_0,c_0)$ such that
\[
\mathcal{E}_0(\phi)-\mathcal{E}_0(u)\geq C_1\rho(\phi,u)^2
\]
for all $\phi\in\Sigma_0$ such that $\rho(\phi,u)<\delta$.
\end{prop}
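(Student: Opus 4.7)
The plan is to adapt the Grillakis-Shatah-Strauss framework, in the periodic form developed by Bona, Souganidis and Strauss \cite{BSS}, to the three-parameter family of periodic traveling waves. A Taylor expansion of $\mathcal{E}_0$ about the critical point $u$, using $\mathcal{E}_0'(u) = 0$, gives for $\phi = R_\xi u + w$
\[
\mathcal{E}_0(\phi) - \mathcal{E}_0(u) = \tfrac{1}{2}\langle \mathcal{L}[u] w, w\rangle + R(w), \qquad |R(w)| \le C\|w\|_X^3.
\]
A standard tubular-neighborhood argument lets us choose the phase $\xi$ so that $\langle w, u_x\rangle = 0$ and $\rho(\phi,u) \asymp \|w\|_X$. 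The condition $\phi \in \Sigma_0$ then imposes $\langle w,1\rangle = 0$ and $\langle w, u\rangle = -\tfrac{1}{2}\|w\|_{L^2}^2$, the second being a quadratic perturbation to be absorbed into $R$. It therefore suffices to establish a constrained coercivity estimate $\langle \mathcal{L}[u] w,w\rangle \ge C\|w\|_X^2$ on the codimension-three subspace $V$ cut out by $\langle w,1\rangle = \langle w, u\rangle = \langle w, u_x\rangle = 0$.

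Since $T_E > 0$, Lemma \ref{specL} gives that $\mathcal{L}[u]$ has Morse index one and simple kernel $\operatorname{span}(u_x)$. By a standard index-counting lemma (see \cite{GSSI}), $\mathcal{L}[u]|_V$ is positive definite modulo the kernel iff the symmetric $2\times 2$ matrix
\[
A = \bigl(\langle \mathcal{L}[u]^{-1} g_i, g_j\rangle\bigr)_{i,j=1,2}, \qquad (g_1, g_2) = (1, u),
\]
has exactly one negative eigenvalue; here $\mathcal{L}[u]^{-1}$ denotes the pseudoinverse on the range $\{u_x\}^\perp$, which is well defined since $\langle 1, u_x\rangle = \langle u, u_x\rangle = 0$ by periodicity.

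The heart of the argument is the identification of the entries of $A$ with the stability Jacobians. Direct differentiation of \eqref{quad1} in the parameters gives $\mathcal{L}[u] u_a = -1$, $\mathcal{L}[u] u_c = -u$, and $\mathcal{L}[u] u_E = 0$, but $u_a$, $u_c$, and $u_E$ are not in general $T$-periodic. Using the monodromy computation already performed in Lemma \ref{specL}, the combinations $\tilde{u}_a := u_a - (T_a/T_E)\, u_E$ and $\tilde{u}_c := u_c - (T_c/T_E)\, u_E$ are $T$-periodic (the jump in the $u_x$-component over one period is $-u_{xx}(T)T_a$ for $u_a$ and $-u_{xx}(T)T_E$ for $u_E$, so the displayed ratios precisely cancel them) and satisfy $\mathcal{L}[u] \tilde{u}_a = -1$ and $\mathcal{L}[u] \tilde{u}_c = -u$. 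Differentiating the integral formulas \eqref{mass} and \eqref{momentum} with respect to $(a,E,c)$ — carefully accounting for the variable integration interval $[0, T(a,E,c)]$ — and invoking the symmetry relations $K_E = T$, $K_a = M$, $2K_c = P$ yields, after some algebra,
\[
A_{11} = \frac{\{T,M\}_{a,E}}{T_E}, \qquad \det A = -\frac{\{T,M,P\}_{a,E,c}}{2\, T_E}.
\]

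Under the three positivity hypotheses, we therefore obtain $A_{11} > 0$ and $\det A < 0$, so that $A$ has signature $(+,-)$ and $n_-(A) = 1 = n_-(\mathcal{L}[u])$. The index-counting lemma then yields positive semidefiniteness of $\mathcal{L}[u]|_V$ with kernel reduced to $\operatorname{span}(u_x)$; combined with the transversality $\langle w, u_x\rangle = 0$ and the uniform spectral gap above zero on the positive subspace, this produces $L^2$-coercivity on $V$, which upgrades to $X$-coercivity through the quadratic form bounds of the second-order operator $\mathcal{L}[u]$. Absorbing the cubic error $R(w)$ at the cost of shrinking $\varepsilon_0$ completes the proof. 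The main obstacle is the algebraic reduction of the entries of $A$ to the three Jacobians: it demands careful bookkeeping of the jumps of the formal tangent vectors $u_a$, $u_c$, and $u_E$ over one period and of the boundary contributions when differentiating integrals of varying length, but all the needed ingredients — the identity \eqref{overdetermined} and the monodromy calculation of Lemma \ref{specL} — are already in hand.
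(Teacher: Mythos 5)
Your proof is correct, but it reaches Proposition \ref{coercive} by a genuinely different route than the paper, and the comparison is instructive. The paper's proof (Lemmas \ref{positivedefinite}--\ref{coercive2}) avoids the full index-counting machinery: it introduces the single periodic test function $\phi_0=\{u,T,M\}_{a,E,c}$, which satisfies $\mathcal{L}[u]\phi_0=\{T,M\}_{E,c}-\{T,M\}_{a,E}u\in\operatorname{span}(1,u)$ and $\left<\mathcal{L}[u]\phi_0,\phi_0\right>=-\{T,M\}_{a,E}\{T,M,P\}_{a,E,c}<0$, and then runs a short Cauchy--Schwarz argument in the spectral decomposition $\chi\oplus\ker\mathcal{L}[u]\oplus(\mathrm{positive\ subspace})$; the passage to $\Sigma_0$ uses the same modulation lemma you invoke plus the explicit decomposition $\phi(\cdot+\alpha(\phi))=(1+\gamma)u+(\beta-\gamma\left<u\right>/T)+y$ with $\gamma,\beta=\mathcal{O}(\|v\|_X^2)$. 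Your route is equivalent in substance --- the paper's $\phi_0$ is precisely $\mathcal{L}[u]^{-1}$ applied to one vector of $\operatorname{span}(1,u)$, and $\left<\mathcal{L}[u]\phi_0,\phi_0\right><0$ is exactly the statement that your matrix $A$ has a negative direction, which together with $n_-(\mathcal{L}[u])=1$ is all the Cauchy--Schwarz step needs --- and your entries check out: accounting for the moving endpoint gives $\left<1,\tilde u_a\right>=M_a-(T_a/T_E)M_E$, hence $A_{11}=\{T,M\}_{a,E}/T_E$, and the Desnanot--Jacobi identity applied to $D^2_{a,E,c}K$ (via $K_a=M$, $K_E=T$, $2K_c=P$) yields $\det A=\det D^2K/T_E=-\{T,M,P\}_{a,E,c}/(2T_E)$. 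What your computation buys beyond the paper's: a symmetric $2\times 2$ matrix with negative determinant automatically has signature $(+,-)$, so the hypothesis $\{T,M\}_{a,E}>0$ enters your argument only through the dispensable sign of $A_{11}$; your method therefore gives coercivity already from $T_E>0$ and $\{T,M,P\}_{a,E,c}>0$ alone, which speaks directly to the case $\{T,M\}_{a,E}<0$ left open in the paper's concluding remarks (and is consistent with the paper's Corollary 1). One cosmetic slip: the failure of periodicity of $u_a$ is $u_a(x+T)-u_a(x)=-T_a\,u_x(x)$ (differentiate $u(x+T(a,E,c);a,E,c)=u(x;a,E,c)$ in $a$), i.e.\ a defect proportional to $u_x$ as a function, not merely a jump in the derivative component; the cancellation ratio $T_a/T_E$ you use is nevertheless the right one.
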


The proof of Proposition \ref{coercive} is broken down into three lemmas which analyze the quadratic
form induced by the self adjoint operator $\mathcal{L}[u]$.  % restricted to the manifold $\Sigma_0$.
To begin, we define a function $\phi_0$ by
\[
\phi_0(x):=\{u(x;a,E,c),T(a,E,c),M(a,E,c)\}_{a,E,c}\big{|}_{(a_0,E_0,c_0)}.
\]
It follows from a straightforward calculation (see Proposition 4 in \cite{BrJ}) that $\phi_0\in X$ and
\[
\mathcal{L}[u]\phi_0=\{T,M\}_{E,c}-\{T,M\}_{a,E}u,
\]
where the right hand side is evaluated at $(a_0,E_0,c_0)$.  This function plays a large role in the
spectral stability theory for periodic traveling wave solutions\footnote{Actually, the function $u_c$
plays a large role in our analysis via the periodic Evans function.  However, since $u_c$ is not
in general $T$-periodic due to the dependence of the period on the wave speed, we work
here with its periodic analogue $\phi_0$.} of \eqref{gkdv} outlined in section 3.
In particular, we have $\partial_x\mathcal{L}[u]\phi_0=-\{T,M\}_{a,E}u_x$, and hence, assuming
$\{T,M\}_{a,E}\neq 0$ at $(a_0,E_0,c_0)$, $\phi_0$ is
in the generalized periodic null space of the linearized operator $\partial_x\mathcal{L}[u]$.
See \cite{BrJ} for more details on the role of this function in the spectral stability
theory outlined in section 2.

Notice that $\phi_0$ does not belong to the set
\[
\mathcal{T}_0=\{\phi\in X:\left<u,\phi\right>=\left<1,\phi\right>=0\},
\]
which is precisely the tangent space in $X$ to $\Sigma_0$.  Indeed, while $\left<1,\phi_0\right>=0$
by construction, the inner product $\left<u,\phi_0\right>=\{T,M,P\}_{a,E,c}$ does not
vanish by hypothesis.  Using the spectral resolution of the operator $\mathcal{L}[u]$,
we begin the proof of Proposition \ref{coercive} with the following lemma.

\begin{lem}\label{positivedefinite}
Assume that $T_E\geq0$ and $\{T,M\}_{a,E}>0$.  Then
\[
\left<\mathcal{L}[u]\phi,\phi\right>>0.
\]
for every $\phi\in\mathcal{T}_0$ which is orthogonal to the periodic
null space of $\mathcal{L}[u]$.
\end{lem}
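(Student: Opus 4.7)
My plan is to prove coercivity via a Grillakis-Shatah-Strauss-style index count, reducing the positivity claim to the sign of a $2\times 2$ Gram determinant that I can evaluate using the determinantal test function $\phi_0$ together with a companion test function.

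By Lemma~\ref{specL} (take $T_E>0$ for concreteness; the boundary case $T_E=0$ is analogous after accounting for the extra kernel element), $\mathcal{L}[u]$ on $L^2_{\mathrm{per}}([0,T])$ has exactly one negative eigenvalue $-\lambda^2$ with eigenfunction $\chi$, a simple zero eigenvalue spanned by $u_x$, and positive spectrum bounded away from zero. Since $\langle 1,u_x\rangle=\langle u,u_x\rangle=0$ by periodicity, the preimages $\eta_1,\eta_2\in u_x^\perp$ with $\mathcal{L}\eta_1=1$, $\mathcal{L}\eta_2=u$ are well defined, and one can form the symmetric $2\times 2$ matrix $D=(\langle\eta_i,w_j\rangle)_{ij}$ with $w_1=1$, $w_2=u$. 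A standard Lagrange-multiplier analysis, exploiting that $\mathcal{L}$ has exactly one negative direction which must be absorbed by the two constraints, shows that $\langle \mathcal{L}\phi,\phi\rangle>0$ on $\mathcal{T}_0\cap u_x^\perp\setminus\{0\}$ precisely when $\det D<0$.

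To compute $\det D$ I would use two determinantal test functions: $\phi_0=\{u,T,M\}_{a,E,c}$ from the paper, and a companion $\phi_1=\{u,T,P\}_{a,E,c}$. Both are $T$-periodic (the $3\times 3$ determinant kills the secular parts of the non-periodic preimages $u_a,u_c$) and lie in $u_x^\perp$ (since $u_a,u_E,u_c$ are even about $x=0$ while $u_x$ is odd). Applying $\mathcal{L}$ via $\mathcal{L}u_a=-1$, $\mathcal{L}u_E=0$, $\mathcal{L}u_c=-u$ yields preimage identities $\mathcal{L}\phi_i=A_i+B_iu$ whose coefficients are $2\times 2$ Jacobians of pairs from $(T,M,P)$ in variables $(a,E,c)$. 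Pairing with the inner products $\langle \phi_0,1\rangle=0$, $\langle \phi_0,u\rangle=\tfrac12\{T,M,P\}_{a,E,c}$, $\langle \phi_1,1\rangle=-\{T,M,P\}_{a,E,c}$, $\langle \phi_1,u\rangle=0$ (each obtained by collapsing a $3\times 3$ determinant with a repeated row) assembles into a linear system $\mathcal{A}D=R$.

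The endgame invokes the Schwarz-type identities $M_E=T_a$, $P_E=2T_c$, $P_a=2M_c$, which follow directly from $\nabla_{a,E,c}K=(M,T,\tfrac12 P)$. These let one compute $\det \mathcal{A}=-T_E\,\{T,M,P\}_{a,E,c}$ and $\det R=\tfrac12\{T,M,P\}_{a,E,c}^2$, whence
\[
\det D = -\frac{\{T,M,P\}_{a,E,c}}{2\,T_E},
\]
which is strictly negative under the positivity of $T_E$ and $\{T,M,P\}_{a,E,c}$ available in the context of Proposition~\ref{os}. The hypothesis $\{T,M\}_{a,E}>0$ enters to ensure that the $u_c$-coefficient of $\phi_0$ does not vanish, so that $\phi_0$ and $\phi_1$ provide independent preimage data and $\mathcal{A}D=R$ uniquely determines $D$. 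The principal technical obstacle is the algebraic collapse $\det\mathcal{A}=-T_E\{T,M,P\}_{a,E,c}$; the Schwarz symmetry of the action $K$ is the key that turns the signed sum of $2\times 2$ Jacobians into $T_E$ times the $3\times 3$ Jacobian.
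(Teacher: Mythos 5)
Your proposal is essentially correct but takes a genuinely different route from the paper. The paper's proof (following Bona--Souganidis--Strauss) uses the single test function $\phi_0$, the pairing $\left<\mathcal{L}[u]\phi_0,\phi_0\right>=-\{T,M\}_{a,E}\{T,M,P\}_{a,E,c}<0$, and a Cauchy--Schwarz inequality on the positive subspace of $\mathcal{L}[u]$; the hypothesis on $\{T,M\}_{a,E}$ enters only to make that single pairing negative. You instead run the constrained negative-index count on the Gram matrix $D_{ij}=\left<\mathcal{L}^{-1}w_i,w_j\right>$ and evaluate $\det D$ exactly. I verified your formula independently: with the periodic preimages $\eta_1=-u_a+(T_a/T_E)u_E$ and $\eta_2=-u_c+(T_c/T_E)u_E$ one gets $D_{11}=\{T,M\}_{a,E}/T_E$, $D_{22}=-\{T,P\}_{E,c}/(2T_E)$, $D_{12}=D_{21}=\{T,P\}_{a,E}/(2T_E)$, and the Desnanot--Jacobi minor identity $\{T,P\}_{E,c}\{T,M\}_{a,E}-\{T,M\}_{E,c}\{T,P\}_{a,E}=T_E\{T,M,P\}_{a,E,c}$ yields $\det D=-\{T,M,P\}_{a,E,c}/(2T_E)$ as you claim. (That identity is pure multilinear algebra; the Schwarz symmetries $M_E=T_a$, $P_E=2T_c$, $P_a=2M_c$ are only needed to see that $D$ is symmetric.) What your route buys is a sharper conclusion: positivity on $\mathcal{T}_0\cap u_x^\perp$ follows from $T_E>0$ and $\{T,M,P\}_{a,E,c}>0$ alone, so the hypothesis $\{T,M\}_{a,E}>0$ is actually superfluous here. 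Your stated reason for needing it --- solvability of $\mathcal{A}D=R$ --- is not right, since invertibility of $\mathcal{A}$ requires only $\det\mathcal{A}=-T_E\{T,M,P\}_{a,E,c}\neq0$. This stronger statement is consistent with the paper: the corollary following Proposition~\ref{os} is precisely the assertion that $D$ cannot be negative definite.

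Two caveats. First, the ``standard Lagrange-multiplier analysis'' is a real theorem (the count $n(\mathcal{L}|_{\{w_1,w_2\}^\perp})=n(\mathcal{L})-n(D)-z(D)$, valid when $w_1,w_2\perp\ker\mathcal{L}$) that you invoke as a black box; it needs a citation or a proof, and it is the entire engine of your argument. Second, the boundary case $T_E=0$ permitted by the lemma is \emph{not} analogous: there $u_E$ joins the periodic kernel, $\left<u_E,1\right>=M_E$ need not vanish, so $w_1=1$ fails to be orthogonal to $\ker\mathcal{L}$ and $\mathcal{L}^{-1}1$ is not defined --- consistent with the blow-up of your formula for $\det D$ as $T_E\to0$. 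The paper's Cauchy--Schwarz argument absorbs $T_E=0$ painlessly because $\phi$ is assumed orthogonal to the full null space; your method would need a separate limiting argument there, though that case is not used in Proposition~\ref{coercive} or Proposition~\ref{os}.
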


\begin{proof}
The proof is essentially found in \cite{BSS}.  First, suppose that $T_E>0$ and note that by Lemma \ref{specL} we can write
\begin{align*}
\phi_0&=\alpha\chi+\beta u_x+p\\
\phi&=A\chi+\widetilde{p}
\end{align*}
for some constants $\alpha$, $\beta$, $A$, and functions $p$ and $\widetilde{p}$ belonging
to the positive subspace of $\mathcal{L}[u]$.  By assumption the quantity
\begin{equation}
\left<\mathcal{L}[u]\phi_0,\phi_0\right>=-\{T,M\}_{a,E}\{T,M,P\}_{a,E,c}\label{osindex}
\end{equation}
is negative, and hence the above decomposition of $\phi_0$ implies that
\begin{equation}
0>\left<-\lambda^2\alpha\chi+\mathcal{L}[u]p,\alpha\chi+\beta u_x+p\right>=-\lambda^2\alpha^2+\left<\mathcal{L}[u]p,p\right>,
\label{ineq:1}
\end{equation}
which gives an upper bound on the positive number $\left<\mathcal{L}[u]p,p\right>$.  Similarly,
using the above decomposition of $\phi$ we have
\begin{equation}
0=\left<\mathcal{L}[u]\phi_0,\phi\right>=-\lambda^2 A\alpha+\left<\mathcal{L}[u]p,\widetilde{p}\right>.\label{ineq:2}
\end{equation}
Therefore, a simple application of Cauchy-Schwarz implies
\begin{align*}
\left<\mathcal{L}[u]\phi,\phi\right>&=-\lambda^2A^2+\left<\mathcal{L}[u]\widetilde{p},\widetilde{p}\right>\\
&>-\lambda^2A^2+\frac{\left(\lambda^2\alpha A\right)^2}{\lambda^2\alpha^2}\\
&=0
\end{align*}
as claimed.

In the case where $T_E=0$, there exists a $\psi\in X$ linearly independent from $u_x$ such that $\mathcal{L}[u]\psi=0$,
and thus $\phi_0$ and $\phi$ admit representations
\begin{align*}
\phi_0&=\alpha\chi+\gamma_1\psi+\beta u_x+p\\
\phi&=A\chi+\widetilde{p}
\end{align*}
for some constant $\gamma_1$.  It is easily verified that equations \eqref{ineq:1} and \eqref{ineq:2} still hold, and
hence
\[
\left<\mathcal{L}[u]\phi,\phi\right>=\left<-\lambda^2A\chi+\mathcal{L}[u]\widetilde{p},A\chi+\widetilde{p}\right>>0
\]
as before.
\end{proof}

Our strategy in proving Proposition \ref{coercive} is to find a particular set of translates of a
given $\phi\in\mathcal{U}_\varepsilon$ for which the inequality holds.  To this end, we
find a set of translates for each $\phi\in\mathcal{U}_\varepsilon$ such that
this set is orthogonal to the periodic null space of the second variation $\mathcal{L}[u]$
of the augmented energy functional.  This is the content of the following lemma.  %, which states
%that such translates exist by ``factoring out" the group action near the orbit.

\begin{lem}\label{factoraction}
There exists an $\varepsilon>0$ and a unique $C^1$ map $\alpha:\mathcal{U}_\varepsilon\to\RM$ such that
for all $\phi\in\mathcal{U}_\varepsilon$, the function $\phi\left(\cdot+\alpha(\phi)\right)$ is orthogonal
to $u_x$.
\end{lem}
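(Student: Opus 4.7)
The natural approach is a standard Implicit Function Theorem argument exploiting the translation symmetry, essentially as in Bona–Souganidis–Strauss. I would define the auxiliary function
\[
F:X\times\RM\longrightarrow\RM,\qquad F(\phi,\alpha)=\left\langle R_\alpha\phi,\,u_x\right\rangle=\int_0^T \phi(x+\alpha)\,u_x(x)\,dx,
\]
and look for zeroes of $F(\phi,\cdot)$ near $\phi=u$, $\alpha=0$.

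First I would verify that $F(u,0)=0$: since $u$ is $T$-periodic one has $\int_0^T u\,u_x\,dx=\tfrac12[u(T)^2-u(0)^2]=0$. Next I would compute the partial derivative with respect to $\alpha$ at $(u,0)$. Rewriting $F(\phi,\alpha)=\langle\phi,R_{-\alpha}u_x\rangle$ and using that $u\in C^3$ (so $R_{-\alpha}u_x$ depends $C^1$-smoothly on $\alpha$ in $L^2([0,T])$), we obtain
\[
\frac{\partial F}{\partial\alpha}(u,0)=\int_0^T u_x(x)^2\,dx=\|u_x\|_{L^2([0,T])}^2>0,
\]
where strict positivity holds because $u$ is nonconstant on $[0,T]$ (its orbit lies strictly inside the homoclinic). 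Joint $C^1$ regularity of $F$ on $X\times\RM$ follows from this same representation together with the continuity of the unitary translation group $R_\alpha$ acting on $X$ and linearity in $\phi$. The Implicit Function Theorem then yields a neighborhood $\mathcal{V}\subset X$ of $u$, together with a unique $C^1$ map $\alpha:\mathcal{V}\to\RM$ with $\alpha(u)=0$ and $F(\phi,\alpha(\phi))=0$, i.e. $R_{\alpha(\phi)}\phi\perp u_x$.

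To upgrade from a neighborhood of $u$ to the full semi-distance neighborhood $\mathcal{U}_\varepsilon$ of the orbit $\mathcal{O}_u$, I would use the $G$-invariance. Given $\phi\in\mathcal{U}_\varepsilon$ with $\varepsilon$ small enough, there exists $\xi=\xi(\phi)\in\RM$ (not necessarily $C^1$ in $\phi$) such that $\|R_{-\xi}\phi-u\|_X<\varepsilon$, placing $R_{-\xi}\phi\in\mathcal{V}$. Applying the local result to $R_{-\xi}\phi$ and using $R_{\beta}R_{-\xi}\phi=R_{\beta-\xi}\phi$, one extracts an $\alpha(\phi)=\alpha(R_{-\xi}\phi)-\xi$ making $R_{\alpha(\phi)}\phi$ orthogonal to $u_x$. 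The resulting $\alpha$ is independent of the choice of $\xi$ modulo the period $T$ and is locally $C^1$ because near any base point the chosen $\xi$ may be taken constant.

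The main technical point is really just the orthogonality relation and the nondegeneracy $\|u_x\|_{L^2}^2\ne 0$; there is no hard analysis. The only mild subtlety is the regularity of $F$ in the $\alpha$ variable over an arbitrary Hilbert space $X\supset L^2([0,T])$, which is handled by dualizing to move the translation onto the smooth factor $u_x$. Uniqueness of $\alpha$ is understood locally, i.e. within a small interval of $\RM$ determined by the local IFT; globally $\alpha$ is well defined only modulo $T\ZM$, which is harmless since $R_T$ acts trivially on $X$.
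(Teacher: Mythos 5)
Your argument is correct and is essentially identical to the paper's proof: both define the functional $\eta(\phi,\alpha)=\int_0^T\phi(x+\alpha)u_x(x)\,dx$, verify that $\partial_\alpha\eta(u,0)=\int_0^T u_x^2\,dx>0$, apply the implicit function theorem, and extend from a neighborhood of $u$ to all of $\mathcal{U}_\varepsilon$ via translation invariance. Your added remarks on the $C^1$ regularity of $\eta$ (by moving the translation onto $u_x$) and on uniqueness modulo $T\ZM$ only make explicit details the paper leaves to the reference \cite{BSS}.
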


The proof is presented in \cite{BSS}, and is an easy result of the implicit function theorem.  Indeed,
if we define the functional $\eta:X\times\RM\to\RM$ by
\[
\eta(\phi,\alpha)=\int_0^T\phi(x+\alpha)u_x(x)dx,
\]
then $\frac{\partial}{\partial\alpha}\eta(\phi,\alpha)\big{|}_{(\phi,\alpha)=(u,0)}=\int_0^Tu_x^2dx>0$ and hence
the lemma follows by the implicit function theorem and the fact that by translation invariance
the function $\alpha$ can be uniquely extended to $\mathcal{U}_\varepsilon$ for $\varepsilon>0$
sufficiently small.

We now complete the proof of Proposition \ref{coercive} by proving the following lemma.

\begin{lem}\label{coercive2}
If each of the quantities $T_E$, $\{T,M\}_{a,E}$, and $\{T,M,P\}_{a,E,c}$ are positive,
there exists positive constants $\widetilde{C}$ and $\varepsilon$ such that
\[
\mathcal{E}_0(\phi)-\mathcal{E}_0(u)\geq\widetilde{C}\|\phi(\cdot+\alpha(\phi))-u\|_X^2
\]
for all $\phi\in\mathcal{U}_\varepsilon\cap\Sigma_0$.
\end{lem}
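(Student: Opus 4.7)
The plan is to use translation invariance to reduce to an analysis of $v := \phi(\cdot+\alpha(\phi))-u$, Taylor-expand the augmented energy around its critical point $u$, and then upgrade Lemma~\ref{positivedefinite} from strict positivity to a quantitative coercivity estimate in $\|\cdot\|_X$. By translation invariance of $\mathcal{E}$, $\mathcal{M}$, $\mathcal{P}$ we have $\mathcal{E}_0(\phi)=\mathcal{E}_0(\tilde\phi)$, where $\tilde\phi:=\phi(\cdot+\alpha(\phi))$, and the semi-distance $\rho$ satisfies $\rho(\phi,u)\le\|v\|_X$, so the desired inequality reduces to $\mathcal{E}_0(\tilde\phi)-\mathcal{E}_0(u)\ge\widetilde C\|v\|_X^2$. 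The three constraints on $v$ come out cleanly: Lemma~\ref{factoraction} together with $\langle u_x,u\rangle=0$ gives $\langle u_x,v\rangle=0$; $\mathcal{M}(\tilde\phi)=\mathcal{M}(u)$ gives $\langle 1,v\rangle=0$; and $\mathcal{P}(\tilde\phi)=\mathcal{P}(u)$ forces $\langle u,v\rangle=-\tfrac{1}{2}\|v\|_{L^2}^2=O(\|v\|_X^2)$. Since $\mathcal{E}_0(u)=0$ and $\mathcal{E}_0'(u)=0$, Taylor's theorem (for $f$ sufficiently smooth so that $\mathcal{E}_0\in C^3$ near $u$) yields
\[
\mathcal{E}_0(\tilde\phi)-\mathcal{E}_0(u) = \tfrac{1}{2}\langle\mathcal{L}[u]v,v\rangle + R(v),\qquad |R(v)|\le C\|v\|_X^3.
\]

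The crucial analytic step is to upgrade Lemma~\ref{positivedefinite} to a coercive bound: there exists $\kappa>0$ with $\langle\mathcal{L}[u]w,w\rangle\ge\kappa\|w\|_X^2$ for every $w$ in the closed subspace $V:=\mathcal{T}_0\cap\{u_x\}^\perp$. This follows by combining the strict positivity of Lemma~\ref{positivedefinite} with a standard compactness argument: $\mathcal{L}[u]$ is a second-order elliptic operator with compact resolvent on $L^2_{\rm per}([0,T])$, so by Lemma~\ref{specL} the spectrum is discrete and bounded away from zero on the subspace $V$; the $H^1_{\rm per}$-ellipticity inequality $\langle\mathcal{L}[u]w,w\rangle\ge\tfrac{1}{2}\|w_x\|_{L^2}^2-C'\|w\|_{L^2}^2$ then upgrades this to $H^1$-coercivity, and the embedding of $X$ into $H^1_{\rm per}$ transfers the estimate to $\|\cdot\|_X$. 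Importantly, the hypotheses $\{T,M\}_{a,E}>0$ and $\{T,M,P\}_{a,E,c}>0$ enter here through Lemma~\ref{positivedefinite}, whose proof uses precisely the negativity of $\langle\mathcal{L}[u]\phi_0,\phi_0\rangle=-\{T,M\}_{a,E}\{T,M,P\}_{a,E,c}$, while $T_E>0$ ensures via Lemma~\ref{specL} that the zero eigenvalue is simple.

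To absorb the fact that $v$ lies only approximately in $V$ (the constraint $\langle u,v\rangle=O(\|v\|_X^2)$ is not exact), I decompose $v=w+\delta h$ with $h\in X$ a fixed auxiliary direction satisfying $\langle 1,h\rangle=\langle u_x,h\rangle=0$ and $\langle u,h\rangle\ne0$ (such $h$ exists because $u$, $1$, $u_x$ are linearly independent; one may take $h=u-\alpha_1-\alpha_2 u_x$ with $\alpha_1,\alpha_2$ chosen to orthogonalize against $1$ and $u_x$), and with $\delta=\langle u,v\rangle/\langle u,h\rangle$. Then $w\in V$ and $|\delta|\,\|h\|_X=O(\|v\|_X^2)$, so $\|w\|_X=\|v\|_X+O(\|v\|_X^2)$ and
\[
\langle\mathcal{L}[u]v,v\rangle = \langle\mathcal{L}[u]w,w\rangle + 2\delta\langle\mathcal{L}[u]w,h\rangle + \delta^2\langle\mathcal{L}[u]h,h\rangle \ge \kappa\|v\|_X^2 - C''\|v\|_X^3,
\]
using the boundedness of $\mathcal{L}[u]\colon X\to X^*$. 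Combining with the cubic remainder $R(v)$ and shrinking $\varepsilon$ so that the cubic terms are dominated by the quadratic lower bound yields $\mathcal{E}_0(\tilde\phi)-\mathcal{E}_0(u)\ge\widetilde C\|v\|_X^2$ as required.

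The main obstacle I expect is the quantitative coercivity upgrade of Lemma~\ref{positivedefinite}, which demands a careful spectral-compactness argument in the correct topology on $X$ (and genuinely uses the identification of the unique negative direction $\chi$ controlled via the constraint manifold $\Sigma_0$). A secondary bookkeeping issue is verifying that the auxiliary correction $\delta h$ really contributes only at order $\|v\|_X^3$ in the quadratic form; this uses the uniform bound $|\langle\mathcal{L}[u]w,h\rangle|\le C\|w\|_X$ for $w\in V$ and $\delta=O(\|v\|_X^2)$, which in turn rests on continuity of $\mathcal{L}[u]$ as an operator between $X$ and its dual.
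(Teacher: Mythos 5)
Your proposal is correct and is essentially the paper's argument: reduce to $v=\phi(\cdot+\alpha(\phi))-u$, use the constraints $\mathcal{M}(\phi)=\mathcal{M}(u)$, $\mathcal{P}(\phi)=\mathcal{P}(u)$ and Lemma \ref{factoraction} to show $v$ deviates from the subspace $\mathcal{T}_0\cap\{u_x\}^{\perp}$ only by an $\mathcal{O}(\|v\|_X^2)$ correction (your auxiliary direction $h$, with $\langle u,h\rangle\neq 0$ by Jensen, is exactly the paper's component $\gamma\left(u-\langle u\rangle/T\right)$), then Taylor expand and apply the coercive form of Lemma \ref{positivedefinite}. The quantitative coercivity upgrade you flag as the main obstacle is indeed used silently in the paper, so your explicit spectral/G\aa rding argument fills in a step the paper leaves implicit rather than departing from it.
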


\begin{proof}
Let $\varepsilon>0$ be small enough such that Lemma \ref{factoraction} holds.
Fix $\phi\in\mathcal{U}_\varepsilon\cap\Sigma_0$ and write
\[
\phi(\cdot+\alpha(\phi))=(1+\gamma)u+\left(\beta-\frac{\gamma\left<u\right>}{T}\right)+y
\]
where $y\in\mathcal{T}_0$.  Moreover, define $v=\phi(\cdot+\alpha(\phi))-u$ and note that by
replacing $u$ with $R_\xi u$ if necessary we can assume that $\|v\|_{X}<\varepsilon$.
By Taylors theorem, we have
\[
M(a_0,E_0,c_0)=\mathcal{M}(\phi)=M(a_0,E_0,c_0)+\left<1,v\right>+\mathcal{O}\left(\|v\|_X^2\right).
\]
Since $\left<1,v\right>=\beta T$, it follows that $\beta=\mathcal{O}\left(\|v\|_{X}^2\right)$.
Similarly, we have
\[
P(a_0,E_0,c_0)=P(a_0,E_0,c_0)+\left<u,v\right>+\mathcal{O}\left(\|v\|_{X}^2\right).
\]
Moreover, a direct calculation yields
\[
\left<u,v\right>=\gamma\left(\|u\|_{L^2([0,T])}^2-\frac{\left<u\right>^2}{T}\right)+\beta\left<u\right>,
\]
where $\left<u\right>=\int_0^Tu(x)dx$.  Since $\left<u\right>^2<T\|u\|_{L^2([0,T])}^2$ by Jensen's inequality,
it follows that $\gamma=\mathcal{O}\left(\|v\|_{X}^2\right)$.

Now, by Taylor's theorem and the translation invariance of $\mathcal{E}_0$, we have
\begin{align*}
\mathcal{E}_0(\phi)&=\mathcal{E}_0\left(\phi(\cdot+\alpha(\phi))\right)\\
&=\mathcal{E}_0(u)+\left<\mathcal{E}_0'(u),v\right>
          +\frac{1}{2}\left<\mathcal{E}_0''(u)v,v\right>
          +o\left(\|v\|_{X}^2\right)\\
&=\mathcal{E}_0(u)+\frac{1}{2}\left<\mathcal{L}[u]v,v\right>
          +o\left(\|v\|_{X}^2\right).
\end{align*}
Hence, by the previous estimates on $\gamma$ and $\beta$, it follows that
\begin{align*}
\mathcal{E}_0(\phi)-\mathcal{E}_0(u)&=\frac{1}{2}\left<\mathcal{L}[u]v,v\right>
       +o\left(\|v\|_X^2\right)\\
&=\frac{1}{2}\left<\mathcal{L}y,y\right>+o\left(\|v\|_{X}^2\right).
\end{align*}
Since $y\in\mathcal{T}_0$ and $\left<y,u_x\right>=0$ by Lemma \ref{factoraction}, it follows
from Lemma \ref{positivedefinite} that
\[
\mathcal{E}_0(\phi)-\mathcal{E}_0(u)\geq\frac{C_1}{2}\|y\|^2+o\left(\|v\|_{X}^2\right).
\]
Finally, the estimates
\begin{align*}
\|y\|_X&=\left\|v-\gamma u-\beta-\frac{\gamma\left<u\right>}{T}\right\|_X\\
&\geq\left|\|v\|_X-\left\|\gamma u-\beta-\frac{\gamma\left<u\right>}{T}\right\|_X\right|\\
%&\geq\left\|v\right\|_{X}-\mathcal{O}\left(\left\|\gamma u-\beta-\frac{\gamma\left<u\right>}{T}\right\|_{X}^2\right).
&\geq\left\|v\right\|_{X}-\mathcal{O}\left(\left\|v\right\|_{X}^2\right).
\end{align*}
prove that $\mathcal{E}_0(\phi)-\mathcal{E}_0(u)\geq\frac{C_1}{4}\|v\|_X^2$
for $\|v\|_X$ sufficiently small, and hence completes the proof.
\end{proof}

Proposition \ref{coercive} now clearly follows by Lemma \ref{coercive2} and the definition
of the semi-distance $\rho$.  It is now straightforward to complete the proof of Proposition \ref{os}.

\begin{proof}[Proof of Proposition \ref{os}:]
We now deviate from the methods of \cite{BSS}, \cite{GSSI} and \cite{GSSII}, and rather follow the direct method of \cite{GH}.
Let $\delta>0$ be such that Proposition \ref{coercive} holds, and let $\varepsilon\in(0,\delta)$.
Assume $\phi_0\in X$ satisfies $\rho(\phi_0,u)\leq \varepsilon$ for some small $\varepsilon>0$.
By replacing $\phi_0$ with $R_{\xi}\phi_0$ if needed, we may assume that $\|\phi_0-u\|_{X}\leq\varepsilon$.
Since $u$ is a critical point of the functional $\mathcal{E}_0$, it is clear that we have
\[
\mathcal{E}_0(\phi_0)-\mathcal{E}_0(u)\leq C_1\varepsilon^2
\]
for some positive constant $C_1$.  Now, notice that if $\phi_0\in\Sigma_0$, then the unique solution $\phi(\cdot,t)$
of \eqref{gkdv} with initial data $\phi_0$ satisfies $\phi(\cdot,t)\in\Sigma_0$ for all $t>0$.  Thus,
Proposition \ref{coercive} implies there exists a $C_2>0$ such that $\rho(\phi(\cdot,t),u)\leq C_2\varepsilon$
for all $t>0$.
%provided $\varepsilon>0$ is sufficiently small.
Thus $\phi(\cdot,t)\in\mathcal{U}_\varepsilon$ for all $t>0$, which proves Proposition \ref{os} in this case.

If $\phi_0\notin\Sigma_0$, then we claim we can vary the constants $(a,E,c)$ slightly in order to effectively reduce
this case to the previous one.  Indeed, notice that since we have assumed $\{T,M,P\}_{a,E,c}\neq 0$
at $(a_0,E_0,c_0)$, it follows that the map
\[
(a,E,c)\mapsto\left(T(u(\;\cdot\;;a,E,c)),M(u(\;\cdot\;;a,E,c)),P(u(\;\cdot\;;a,E,c))\right)
\]
is a diffeomorphism from a neighborhood of $(a_0,E_0,c_0)$ onto a neighborhood of $(T,M(a_0,E_0,c_0),P(a_0,E_0,c_0))$.
In particular, we can find constants $a$, $E$, and $c$ with $|a|+|E|+|c|=\mathcal{O}(\varepsilon)$ such that the function
\[
\widetilde{u}=\widetilde{u}(\;\cdot\;;a_0+a,E_0+E,c_0+c)
\]
solves \eqref{gkdv}, belongs to the space $X$, and satisfies
\begin{align*}
M(a_0+a,E_0+E,c_0+a)&=\mathcal{M}(\phi_0)\\
P(a_0+a,E_0+E,c_0+c)&=\mathcal{P}(\phi_0).
\end{align*}
Defining a new augmented energy functional on $X$ by
\[
\widetilde{\mathcal{E}}(\phi)=\mathcal{E}_0(\phi)+c\mathcal{P}(\phi)+a\mathcal{M}(\phi)+ET,
\]
it follows as before that
\begin{align*}
\widetilde{\mathcal{E}}(\phi(\cdot,t))-\widetilde{\mathcal{E}}(\widetilde{u})\geq C_3\rho(\phi(\cdot,t),\widetilde{u})^2
\end{align*}
for some $C_3>0$ as long as $\rho(\phi(\cdot,t),\widetilde{u})$ is is sufficiently small.  Since $\widetilde{u}$ is a critical
point of the functional $\widetilde{\mathcal{E}}$ we have
\[
C_3\rho(\phi(\cdot,t),\widetilde{u})^2\leq\widetilde{\mathcal{E}}(\phi_0)-\widetilde{\mathcal{E}}(\widetilde{u})
             \leq C_4\|\phi_0-\widetilde{u}\|_{X}^2
\]
for some $C_4>0$.  Moreover, it follows by the triangle inequality that
\[
\|\phi_0-\widetilde{u}\|_{X}\leq\|\phi_0-u\|_{X}+\|u-\widetilde{u}\|_{X}\leq C_5\varepsilon
\]
for some $C_5>0$ and hence there is a $C_6>0$ such that
\[
\rho(\phi(\cdot,t),u)\leq\rho(\phi(\cdot,t),\widetilde{u})+\|\widetilde{u}-u\|_{X}\leq C_6\varepsilon
\]
for all $t>0$.
%Notice that we used the fact
%that since $\widetilde{u}$ and $u$ are saddle points of two
%functionals whose coefficients differ by $\mathcal{O}(\varepsilon)$, it follows that
%$\|\widetilde{u}-u\|_{X}\leq C_5\varepsilon$ for some $C_5>0$.
The proof of Proposition \ref{os}, and hence Theorem \ref{os:state}, is now complete.
\end{proof}

%This completes the proof of Theorem \ref{os:state}.
We would like to point out an interesting artifact of the above proof.
Notice that the step at which the sign of the quantities $\{T,M,P\}_{a,E,c}$
and $\{T,M\}_{a,E}$ came into play was in the proof of Lemma \ref{positivedefinite}, from which
we have the following corollary.

\begin{corr}
On the set $\Omega$, the quantity $\{T,M\}_{a,E}$ is positive whenever $\{T,M,P\}_{a,E,c}$ is negative and $T_E$
is positive.
\end{corr}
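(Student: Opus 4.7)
The plan is to argue by contradiction, exploiting the fact that the coercivity-based proof of orbital stability in Proposition \ref{os} depends only on the sign of the product $\{T,M\}_{a,E}\{T,M,P\}_{a,E,c}$ and not on the signs of the two factors individually.

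Fix $(a_0,E_0,c_0)\in\Omega$ and assume $T_E > 0$ and $\{T,M,P\}_{a,E,c} < 0$. Suppose, toward a contradiction, that $\{T,M\}_{a,E} < 0$. Then
\[
\left<\mathcal{L}[u]\phi_0,\phi_0\right> = -\{T,M\}_{a,E}\{T,M,P\}_{a,E,c}
\]
is still strictly negative, being minus the product of two negative numbers. I claim that under this hypothesized sign pattern the entire chain from Lemma \ref{positivedefinite} through Lemma \ref{coercive2}, Proposition \ref{coercive}, and Proposition \ref{os} still goes through and yields orbital stability of $u(\,\cdot\,;a_0,E_0,c_0)$. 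This will contradict Remark \ref{rem:unstable}, which asserts exponential (hence nonlinear) instability whenever $\{T,M,P\}_{a,E,c} < 0$, forcing the desired sign on $\{T,M\}_{a,E}$.

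To establish the claim, I would first verify that the proof of Lemma \ref{positivedefinite} never uses the sign of $\{T,M\}_{a,E}$ in isolation. That argument uses only (i) the spectral decomposition $\phi_0 = \alpha\chi+\beta u_x+p$ available because $T_E > 0$ forces a one-dimensional negative eigenspace for $\mathcal{L}[u]$ by Lemma \ref{specL}; (ii) the negativity of $\left<\mathcal{L}[u]\phi_0,\phi_0\right>$, which forces $\alpha\neq 0$ and $\left<\mathcal{L}[u]p,p\right><\lambda^2\alpha^2$; and (iii) the orthogonality identity $\left<\mathcal{L}[u]\phi_0,\phi\right> = \{T,M\}_{E,c}\left<1,\phi\right>-\{T,M\}_{a,E}\left<u,\phi\right> = 0$ for $\phi\in\mathcal{T}_0$, which holds regardless of the sign of the coefficient $-\{T,M\}_{a,E}$. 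The Cauchy--Schwarz step on the positive subspace then delivers $\left<\mathcal{L}[u]\phi,\phi\right> > 0$ on $\mathcal{T}_0$ orthogonal to $u_x$. A parallel sign-insensitive inspection of Lemma \ref{coercive2} (whose estimates on $\gamma$ and $\beta$ rely only on Jensen's inequality $\left<u\right>^2 < T\|u\|_{L^2([0,T])}^2$) and of the closing step of the proof of Proposition \ref{os} (which invokes the implicit function theorem needing only $\{T,M,P\}_{a,E,c}\neq 0$) then transports the coercivity into orbital stability.

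Closing the contradiction forces $\{T,M\}_{a,E}\geq 0$, and combined with the standing non-degeneracy assumption $\{T,M\}_{a,E}\neq 0$ from Section 2, which is needed throughout to parametrize the traveling waves at fixed wave speed, this yields the required strict positivity. The main obstacle is the careful bookkeeping in the previous paragraph: one must check that no step in the coercivity chain covertly relies on $\{T,M\}_{a,E}>0$ for anything beyond the production of the single inequality $\left<\mathcal{L}[u]\phi_0,\phi_0\right><0$. Any hidden dependence would have to be isolated and handled separately, possibly via an auxiliary argument drawing on the Hamiltonian and Jordan-block structure of $\partial_x\mathcal{L}[u]$ near $\mu = 0$ recalled in Section 3.
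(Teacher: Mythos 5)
Your argument is correct and is essentially identical to the paper's own proof: both proceed by contradiction, observing that if $\{T,M\}_{a,E}$ and $\{T,M,P\}_{a,E,c}$ were simultaneously negative then $\left<\mathcal{L}[u]\phi_0,\phi_0\right>=-\{T,M\}_{a,E}\{T,M,P\}_{a,E,c}$ would still be negative, so the entire coercivity chain through Proposition \ref{os} would deliver orbital stability, contradicting the exponential instability guaranteed by Theorem \ref{orientation} when $T_E>0$ and $\{T,M,P\}_{a,E,c}<0$. Your extra bookkeeping confirming that only the sign of the product enters, and your explicit appeal to the standing nondegeneracy $\{T,M\}_{a,E}\neq 0$ to upgrade $\geq 0$ to $>0$, merely make explicit what the paper leaves implicit.
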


\begin{proof}
This is an easy consequence of Theorem \ref{orientation} and equation \eqref{osindex}.  Indeed, if
$\{T,M\}_{a,E}$ and $\{T,M,P\}_{a,E,c}$ were both negative for some $(a_0,E_0,c_0)\in\Omega$,
then by the proof of Lemma \ref{positivedefinite} we could conclude
that $\left<\mathcal{L}[u(\;\cdot\;;a_0,E_0,c_0)]\phi,\phi\right>>0$ for all $\phi\in\mathcal{T}_0$ which are orthogonal
to $u_x$.  Since this is the only time in which the signs of these quantities arise, it follows that Proposition \ref{os}
would hold thus contradicting Theorem \eqref{orientation}.
\end{proof}

It follows that we have a geometric theory of the orbital stability of periodic traveling wave solutions
of \eqref{gkdv} to perturbations of the same period as the underlying periodic wave.  In the next two
sections, we consider specific examples and limiting cases where the signs of these quantities
can be calculated.  First, we consider periodic traveling wave solutions sufficiently close to an
equilibrium solution (a local minimum of the effective potential) or to the bounding
homoclinic orbit (the separatrix solution).  By considering power-law nonlinearities in each of these
cases, we give necessary and sufficient\footnote{Except in the exceptional case of being near the homoclinic orbit for $p=4$.}
 conditions for the orbital stability of such solutions.
Secondly, we consider the KdV and prove that all periodic traveling wave solutions are orbitally
stable to perturbations of the same periodic as the underlying periodic wave.

\section{Analysis Near Homoclinic and Equilibrium Solutions}

In this section, we use the theory from section $4$ in order to prove general results about
the stability of periodic traveling wave solutions of \eqref{gkdv} in two distinguished limits:
as one approaches the solitary wave, i.e. $(a,E,c)\in\Omega$ and consider the limit $T(a,E,c)\to\infty$
for fixed $a,c$, as well as in a neighborhood of the equilibrium solution, i.e. near a non-degenerate
local minimum of the effective potential $V(u;a,c)$.  Throughout this section,
we will consider only power-law nonlinearities.

We begin with considering stability near the solitary wave.  Our main result in this limit is that the quantities
$T_E$ and $\{T,M\}_{a,E}$ are positive for $(a_0,E_0,c_0)\in\Omega$ with sufficiently large period.
Hence, the orbital stability of such a solution in this limit is determined completely by
the periodic spectral stability index $\{T,M,P\}_{a,E,c}$, which in
turn is controlled by the sign of the solitary wave stability index \eqref{solitaryindex}.
This is the content of the following theorem.

\begin{thm}
In the case of a power-law nonlinearity, i.e. $f(u)=u^{p+1}$ with $p\geq 1$, a periodic traveling wave
solution of \eqref{gkdv} of sufficiently large period and $(a,E,c)\in\Omega$
is orbitally stable if $p<4$ and exponentially unstable to perturbations
of the same period as the underlying wave if $p>4$.
%and only if the limiting homoclinic orbit is orbitally stable.
\end{thm}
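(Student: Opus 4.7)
The plan is to verify the hypotheses of Theorem \ref{os:state} when $p<4$ and the instability condition of Remark \ref{rem:unstable} when $p>4$, by performing an asymptotic analysis of the three Jacobians $T_E$, $\{T,M\}_{a,E}$ and $\{T,M,P\}_{a,E,c}$ in the homoclinic limit $T\to\infty$, i.e.\ $E\nearrow E_*(a,c):=V(u_s;a,c)$, where $u_s=u_s(a,c)$ is the saddle of the effective potential bounding the periodic orbit. The first condition is automatic: since $f'(u)=(p+1)u^p$ is co-periodic with $u$, Lemma \ref{periodLem} gives $T_E>0$ throughout $\Omega$ without any limit.

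For the remaining two Jacobians I would begin from the standard saddle-point expansion. Setting $s=E_*(a,c)-E$, direct analysis of the action integral near the non-degenerate saddle yields
\[
T(a,E,c) = -A(a,c)\log s + T_{\mathrm{reg}}(a,E,c),\qquad A(a,c)=\sqrt{2/|V''(u_s;a,c)|}>0,
\]
with $T_{\mathrm{reg}}$ bounded as $s\to 0^+$. Since the orbit spends asymptotically all of its $x$-time near $u=u_s$, the mass and momentum admit the decomposition
\[
M = u_s T + \hat M_{\mathrm{sol}}(a,c) + o(1),\qquad P = u_s^2 T + 2u_s\hat M_{\mathrm{sol}}(a,c) + \hat P_{\mathrm{sol}}(a,c) + o(1),
\]
with $\hat M_{\mathrm{sol}},\hat P_{\mathrm{sol}}$ the integrals of $(u-u_s)$ and $(u-u_s)^2$ over the separatrix. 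A crucial feature is that the $1/s$ singularity of $T_E$ propagates into $T_a,T_c$ through the $a,c$-dependence of $E_*$: using $(E_*)_a=-u_s$ and $(E_*)_c=-u_s^2/2$, one gets at leading $1/s$ order $T_a\sim u_s A/s$, $T_c\sim u_s^2 A/(2s)$, $M_a\sim u_s^2 A/s$, $M_E=T_a$, and similarly for the remaining entries, all proportional to rows of a rank-one matrix.

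This rank-one leading structure forces the naive leading contribution to the $2\times2$ and $3\times3$ minors to cancel identically, so the sign of each Jacobian is determined by subleading $\log s$ corrections. A careful expansion gives
\[
\{T,M\}_{a,E}\sim \frac{A\bigl[(u_s)_a A - u_s A_a\bigr]\log s}{s}
\]
to leading order, and using $(u_s)_a=1/V''(u_s)<0$ together with the explicit formula for $A_a$ in the power-law case one verifies that the bracketed factor combines with $(\log s)/s<0$ to give $\{T,M\}_{a,E}>0$ for $s$ sufficiently small throughout $\Omega$. Analogously, $\{T,M,P\}_{a,E,c}=-2\det D^2K$ has its leading $1/s^3$ contribution cancel (by the rank-one structure of the divergent part of $D^2K$), and the surviving $(\log s)^2/s$ contribution reduces, after careful bookkeeping, to a positive multiple of $\partial_c\hat P_{\mathrm{sol}}(a,c)$, the $c$-derivative of the $L^2$-norm of the centered solitary wave. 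For the power-law nonlinearity, the explicit profile $u_{\mathrm{sol}}(x;c)=\bigl((p+1)(p+2)c/2\bigr)^{1/p}\sech^{2/p}\!\bigl(p\sqrt{c}\,x/2\bigr)$ at $a=0$ gives $\hat P_{\mathrm{sol}}(0,c)=C_p\,c^{(4-p)/(2p)}$ with $C_p>0$, so $\partial_c\hat P_{\mathrm{sol}}(0,c)$ has the sign of $4-p$; continuity in $a$ propagates this sign throughout the connected component of $\Omega$ containing $a=0$.

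Putting these together: when $p<4$, all three Jacobians are positive for $T$ sufficiently large, and Theorem \ref{os:state} yields orbital stability; when $p>4$ the third Jacobian is negative, and Remark \ref{rem:unstable} yields exponential instability to perturbations of the same period. The main technical obstacle is precisely the asymptotic identification of $\{T,M,P\}_{a,E,c}$ as a positive multiple of the classical solitary-wave stability index after cancellation of the leading rank-one contribution; this is where the soliton stability dichotomy $p<4$ versus $p>4$ is inherited by large-period periodic waves.
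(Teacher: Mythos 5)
Your conclusion is right, but you take a genuinely different route from the paper, whose proof is very short: it gets $T_E>0$ from Lemma \ref{periodLem} (as you do), cites \cite{BrJ} for $M_a<0$ in this limit — whence $\{T,M\}_{a,E}=M_E^2-T_EM_a>0$ immediately, with no need for your separate expansion of that minor — and then uses the scaling relation $u(x;a,E,c)=c^{1/p}u\bigl(c^{1/2}x;a/c^{1+1/p},E/c^{1+2/p},1\bigr)$ to collapse the $3\times3$ Jacobian, as $(a,E)\to(0,0)$, to $\{T,M,P\}_{a,E,c}\sim-T_EM_a\bigl(\tfrac{2}{pc}-\tfrac{1}{2c}\bigr)P$. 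Your direct saddle-point expansion lands in the same place: since $P\to\int u_{\mathrm{sol}}^2\,dx\propto c^{2/p-1/2}$ in the homoclinic limit, the paper's factor $\bigl(\tfrac{2}{pc}-\tfrac{1}{2c}\bigr)P$ is precisely your $\partial_c\hat P_{\mathrm{sol}}$, and your rank-one cancellation of the divergent part of $D^2K$ (which is indeed $\tfrac{A}{s}(1,u_s,u_s^2)^{T}(u_s,1,u_s^2/2)$) followed by the identification of the surviving term as $-T_EM_a\,\partial_c\hat P_{\mathrm{sol}}$ reproduces the paper's asymptotic relation. What your route buys is a self-contained derivation of the facts the paper outsources to \cite{BrJ}, notably $M_a\sim(u_s)_aT=T/V''(u_s)<0$; what the paper's route buys is brevity and exact coefficients without tracking subleading orders.

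Two loose ends. First, for $a\neq0$ your subleading bookkeeping is incomplete: writing $T=-A\log s+T_{\mathrm{reg}}$ with $T_{\mathrm{reg}}=B+O(s\log s)$, the $a$- and $c$-derivatives of the $O(s\log s)$ tail contribute additional $O(\log s)$ terms proportional to $(E_*)_a=-u_s$ and $(E_*)_c=-u_s^2/2$ that your formula for $\{T,M\}_{a,E}$ omits. These vanish at $a=0$ (where $u_s=0$ for the power law), so the clean course is to compute at $a=0$; there the dominant surviving term of the $3\times3$ determinant is $-T_EM_aP_c=O(s^{-1}\log s)$, not $O(s^{-1}(\log s)^2)$ as you state. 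Second, your claim that continuity in $a$ propagates the sign "throughout the connected component of $\Omega$ containing $a=0$" is an overreach — continuity gives the sign only for $|a|$ small, which is all the paper establishes and all that is needed. (Minor: the soliton amplitude for $u_{xx}+u^{p+1}-cu=0$ is $((p+2)c/2)^{1/p}$, without the factor $p+1$; this does not affect the $c$-scaling or the sign of $4-p$.)
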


\begin{proof}
By the work in \cite{BrJ}, the quantity $M_a$ is negative for such $(a,E,c)\in\Omega$.
Moreover, since we are working with a power-law nonlinearity, the periodic traveling wave
solutions satisfy the scaling relation
\[
u(x;a,E,c)=c^{1/p}u\left(c^{1/2}x;\frac{a}{c^{1+1/p}},\frac{E}{c^{1+2/p}},1\right)
\]
from which we get the asymptotic relation
\[
\{T,M,P\}_{a,E,c}\sim-T_EM_a\left(\frac{2}{pc}-\frac{1}{2c}\right)P
\]
as $\Omega\ni(a,E,c)\to(0,0,c)$ for a fixed wave speed.
Since $\{T,M\}_{a,E}=M_E^2-T_EM_a$, it follows from Lemma \ref{periodLem} and
Theorem \ref{orientation} that the solutions $u(x;a,E,c)$ with $(a,E,c)\in\Omega$
of sufficiently large period are orbitally stable if $p<4$ and exponentially
unstable to periodic perturbations if $p>4$.  See \cite{BrJ} for more details of
these asymptotic calculations.
%, i.e.
%if and only if the limiting homoclinic orbit is orbitally stable.
\end{proof}

Next, we consider periodic traveling wave solutions near the equilibrium solution.
We will use the methods of this paper to prove that such solutions
are orbitally stable to periodic perturbations, provided
that $a$ is sufficiently small.  To begin, we fix a wave speed $c>0$ and consider
\eqref{gkdv} with a power-law nonlinearity $f(u)=u^{p+1}$ with $p\geq 1$.
Recall that $T_E>0$ by Lemma \ref{periodLem},
and hence it suffices to prove that $\{T,M\}_{a,E}$ and $\{T,M,P\}_{a,E,c}$ are both
positive near the equilibrium solution.  By continuity, it is enough to evaluate both
these indices at the equilibrium and to show they are both positive there.  This is
the content of the following lemma.  %Notice
%that by Lemma \ref{kdv:lem} we know $M_a<0$ {\it near} the equilibrium solution, and hence
%the case $p=1$ follows.  We must consider this case separately since the proof
%of this lemma proves $M_a=0$ at the equilibrium solution in this case.

\begin{lem}
Consider \eqref{gkdv} with a power-law nonlinearity $f(u)=u^{p+1}$ for $p\geq 1$.
Then the quantity $M_a$ is negative for all $(a_0,E_0,c_0)\in\Omega$
such that $|a|$ is sufficiently small and the corresponding solution $u(\;\cdot\;;a_0,E_0,c_0)$
is sufficiently close to the equilibrium solution\footnote{In the case of the KdV ($p=1$), $M_a$ is negative
in a deleted neighborhood of the equilibrium solution.}.
\end{lem}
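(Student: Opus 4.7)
The plan is to reduce to the normalized wave speed $c = 1$ via the scaling symmetry and then extract $M_a$ from a small-amplitude expansion of the action $K(a,E,1)$ about the local minimum $u_*(a)$ of the effective potential $V(u;a) := u^{p+2}/(p+2) - u^2/2 - au$. The identity $u(x;a,E,c) = c^{1/p}u(c^{1/2}x; ac^{-1-1/p}, Ec^{-1-2/p}, 1)$ induces a corresponding scaling for $M$, so the sign of $M_a(a,E,c)$ matches that at the rescaled triple with unit wave speed and nothing is lost in fixing $c = 1$. Write $\tilde E := E - V(u_*(a); a)$; closeness to the equilibrium means precisely $\tilde E \to 0^+$.

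The technical engine is a Morse-type substitution: there is a smooth change of variable $v = u - u_*(a) = w + \beta_2(a) w^2 + \beta_3(a) w^3 + \cdots$, depending smoothly on $a$, for which
\[
V(u_*+v;a) - V(u_*;a) = \tfrac{1}{2} V_2(a) w^2,
\]
where $V_k := \partial_u^k V(u_*;a)$. Matching coefficients gives $\beta_2 = -V_3/(6V_2)$ and $\beta_3 = (5V_3^2 - 3V_2V_4)/(72 V_2^2)$. Substituting into $K = 2\int \sqrt{2\tilde E - V_2 w^2}\,(dv/dw)\,dw$, parametrizing $w = r\sin\phi$ with $r := \sqrt{2\tilde E/V_2}$, and discarding odd powers of $w$ yields
\[
K(a,\tilde E, 1) = \frac{2\pi}{\sqrt{V_2}}\,\tilde E + \frac{3\pi \beta_3}{V_2^{3/2}}\,\tilde E^2 + \mathcal{O}(\tilde E^3).
\]

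Applying $M = \partial_a K|_E$ together with the envelope identity $\partial_a V(u_*(a);a) = -u_*(a)$, which gives $\partial_a \tilde E |_E = u_*$, I obtain
\[
M(a,E,1) = u_*\,c_1 + \tilde E\bigl[(c_1)_a + 2u_* c_2\bigr] + \mathcal{O}(\tilde E^2),
\]
with $c_1 := 2\pi V_2^{-1/2}$ and $c_2 := 3\pi \beta_3 V_2^{-3/2}$. A second differentiation in $a$ at fixed $E$ followed by $\tilde E \to 0^+$ then produces the boundary value
\[
\lim_{\tilde E \to 0^+} M_a(a,E,1) = u_*'(a)\,c_1(a) + 2\,u_*(a)\,(c_1)_a(a) + 2\,u_*(a)^2\,c_2(a),
\]
an explicit rational expression in $u_*(a)$ and the $V_k(a)$.

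At $a = 0$ one has $u_* = 1$, $V_2 = p$, $V_3 = p(p+1)$, and $V_4 = p(p-1)(p+1)$ (interpreted as $0$ when $p=1$, where the formula is re-derived directly), and the limit collapses to an explicit rational multiple of $\pi$ whose negativity must be checked by inspection. Once strict negativity is confirmed at the special point $a = 0$, continuity of $M_a$ on $\Omega$ propagates $M_a < 0$ to a full neighborhood in $(a,E)$ with $|a|$ small, and the scaling reduction transports the conclusion to arbitrary $c > 0$. The principal obstacle is bookkeeping: the three summands $u_*'c_1$, $2u_*(c_1)_a$, and $2u_*^2 c_2$ are all of the same order and only their combination carries a definite sign, so the Morse coefficient $\beta_3$ (which depends on $V_4$) must be tracked precisely through the expansion; any error in $\beta_3$ corrupts the leading-order sign directly.
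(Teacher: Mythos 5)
Your setup is sound: the reduction to $c=1$, the Morse normal form with $\beta_3=(5V_3^2-3V_2V_4)/(72V_2^2)$, the expansion $K=\frac{2\pi}{\sqrt{V_2}}\tilde E+\frac{3\pi\beta_3}{V_2^{3/2}}\tilde E^2+\mathcal{O}(\tilde E^3)$, and the identity $\partial_a\tilde E|_E=u_*$ are all correct, and they do yield $\lim_{\tilde E\to0^+}M_a=c_1u_*'+2(c_1)_au_*+2c_2u_*^2$. (For $p=1$ your coefficients check out against the exact cnoidal-wave expansion: $T_E\to 5\pi/3$ and $M_E\to-\pi/3$.) The genuine gap is that you defer the one step on which the lemma actually rests --- ``negativity must be checked by inspection'' --- and when one performs that inspection it does \emph{not} come out negative for all $p\geq1$. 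With $u_*=1$, $u_*'=1/p$, $V_2=p$, $(V_2)_a=p+1$, $\beta_3=(p+1)(p+4)/36$ one finds
\[
\lim_{\tilde E\to0^+}M_a(0,E,1)=\frac{2\pi}{p^{3/2}}-\frac{2\pi(p+1)}{p^{3/2}}+\frac{\pi(p+1)(p+4)}{6p^{3/2}}=\frac{\pi\left(p^2-7p+4\right)}{6p^{3/2}},
\]
which is negative only for $1\leq p<\tfrac{1}{2}(7+\sqrt{33})\approx 6.37$ and positive for larger $p$. So your argument, honestly completed, establishes the claim only for $p$ below that threshold and contradicts it above; you cannot leave the sign check as an afterthought, since the three contributions are of the same order and nearly cancel.

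It is worth understanding why this clashes with the paper. The paper expands the solution as $P_{a,E,c}(k_{a,E,c}x)$ about the constant state, computes $M\approx 2\pi u_{a,c}/k_{a,E^*,c}$, and differentiates to get $M_a|_{(0,E^*,c)}=\frac{\pi(1-p)}{p\sqrt{cp}}$. That is exactly your first group $(c_1u_*)_a$, i.e.\ the derivative of the equilibrium mass \emph{along the curve} $E=E^*(a)$; it discards the contribution $u_*M_E=u_*(c_1)_a+2u_*^2c_2$ that arises because $\tilde E=E-E^*(a)$ varies with $a$ at fixed $E$ (the paper's remainder $\mathcal{O}(\sqrt{E-E^*}+a^2)$ is not uniformly $a$-differentiable as $E\searrow E^*$). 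Your bookkeeping is the more careful one --- it already shows the $p=1$ limit is $-\pi/3$ rather than the $0$ asserted in the footnote --- but precisely for that reason the lemma as stated is not what your method delivers for large $p$. If your aim is the stability application, note that what is actually needed is $\{T,M\}_{a,E}=M_E^2-T_EM_a>0$, and your coefficients give this limit as $((c_1)_a)^2-2c_1c_2u_*'=\frac{\pi^2(p+1)(2p-1)}{3p^3}>0$ for all $p\geq1$, even where $M_a>0$; proving that instead would close the argument without the restriction on $p$.
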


\begin{proof}
First, denote the equilibrium solution as $u_{a,c}$ and let $E^*(a,c)=V(u_{a,c};a,c)$.
It follows that
\[
\lim_{E\searrow E^*}T(a,E,c)=\frac{2\pi}{\sqrt{cp}}
\]
and that the equilibrium solution admits the expansion
\[
u_{a,c}=c^{1/p}\left(1+\frac{a}{p}\right)+\mathcal{O}(a^2).
\]
Now, solutions near the equilibrium $u_{a,c}$ can each be written as
$u(x;a,E,c)=P_{a,E,c}(k_{a,E,c}x)$, where  $k_{a,E,c}T(a,E,c)=2\pi$
and $P_{a,E,c}$ is a $2\pi$ periodic solution of the ordinary differential equation
\begin{equation*}
k_{a,E,c}^2v''+v^{p+1}-c^{1+1/p}a=0%\label{quadsmall}
\end{equation*}
such that
\[
P_{a,E^*,c}=u_{a,c},\;\;k_{a,E^*,c}^2=(p+1)u_{a,c}^p-1.
\]
Straightforward computations give the expansions
\begin{align*}
P_{a,E,c}(z)&=u_{a,c}+\mathcal{O}(\sqrt{E-E^*}\left(1+a^2\right)),\\%c^{1/p}\alpha\cos(z)+\mathcal{O}(|\alpha|(\alpha^2+a^2))\\
k^2_{a,E,c}&=cp+(p+1)ca+\mathcal{O}((E-E^*)+a^2).
\end{align*}
Thus, the mass $M(a,E,c)$ can be expanded as
\begin{align*}
M(a,E,c)&=\int_0^{2\pi/k_{a,E,c}}P_{a,E,c}(k_{a,E,c}z)dz\\
&=\frac{1}{k_{a,E,c}}\int_0^{2\pi}\widetilde{P}_{a,E,c}(z)dz\\
&=\frac{2\pi}{\sqrt{cp}}\left(1+\frac{(1-p)a}{2p}\right)+\mathcal{O}(\sqrt{E-E^*}+a^2)
\end{align*}
It follows that
\[
\frac{\partial}{\partial a}M(a,E,c)\big{|}_{(0,E^*,c)}=\frac{\pi(1-p)}{p\sqrt{cp}}
\]
which is negative for $p>1$.

The case $p=1$, which corresponds to the KdV equation, will be discussed in the next section.
There we will show that although $M_a$ vanishes {\it at} the equilibrium solution,
it is indeed negative for {\it nearby} periodic traveling waves with the same wave speed $c$,
i.e.
\[
\frac{\partial^2}{\partial E\partial a}M(a,E,c)\big{|}_{(0,E^*,c)}<0.
\]
\end{proof}

%Defining
%\[
%\alpha=\frac{1}{\pi c^{1/p}}\int_0^{2\pi}P_{a,E,c}(z)e^{-iz}dz
%\]
%and notice that it follows that
%\[
%\alpha^2=\frac{2(E-E^*)}{k_{a,E,c}^2}.
%\]
%It follows that $\alpha$ is proportional to the first non-zero Fourier coefficient of $P_{a,E,c}$
%and hence $\alpha\to 0$ as $E\searrow E^*$.  It follows then that
%\[
%\lim_{E\searrow E^*}T(a,E,c)=\frac{2\pi}{\sqrt{p}}
%\]
%Moreover, $\widetilde{P}_{a,\alpha,c}$ is a even solution of the ODE
%\[
%k_{a,\alpha,c}^2v''+v^{p+1}-c^{1+1/p}a=0
%\]
%such that $\widetilde{P}_{a,0,c}=u_{b,c}$, $k_{a,0,c}^2=(p+1)u_{b,c}^p-1$.  Straightforward calculations
%then yield the expansions
%\begin{align*}
%\widetilde{P}_{a,\alpha,c}(z)&=u_{b,c}+c^{1/p}\alpha\cos(z)+\mathcal{O}(|\alpha|(\alpha^2+a^2))\\
%k^2_{a,\alpha,c}&=cp+(p+1)ca+\mathcal{O}(\alpha^2+a^2).
%\end{align*}
%Thus, the mass $\widetilde{M}(a,\alpha,c)$ can be expanded as
%\begin{align*}
%\widetilde{M}(a,\alpha,c)&=\int_0^{2\pi/k_{a,\alpha,c}}\widetilde{P}_{a,\alpha,c}(k_{a,\alpha,c}z)dz\\
%&=\frac{1}{k_{a,\alpha,c}}\int_0^{2\pi}\widetilde{P}_{a,\alpha,c}(z)dz\\
%&=\frac{2\pi}{\sqrt{cp}}\left(1+\frac{(1-p)a}{2p}\right)+\mathcal{O}(|\alpha|+a^2)
%\end{align*}
%It follows that
%\[
%\frac{\partial}{\partial a}\widetilde{M}(a,\alpha,c)\big{|}_{(0,0,c)}=\frac{\pi(1-p)}{p\sqrt{cp}}.
%\]
%\end{proof}

Next, we must determine the sign of the periodic spectral stability index $\{T,M,P\}_{a,E,c}$.
Although it follows from Theorem 4.4 in \cite{HK} that this index must be positive
\footnote{They prove the spectrum of the linearized operator $\partial_x\mathcal{L}[u]$
intersects the real axis only at $\mu=0$ for such small amplitude solutions.}, we present
an independent proof based on the periodic Evans function methods of \cite{BrJ}.
To this end, we point out that the Hamiltonian structure of the linearized operator
$\partial_x\mathcal{L}[u]$ we have the identity
\[
\{T,M,P\}_{a,E,c}=-\frac{2}{3}\tr\left(\MM_{\mu\mu\mu}(0)\right),
\]
where $\MM(\mu)$ is the corresponding monodromy operator (see Theorem 3 of \cite{BrJ} for details).
Thus, it is sufficient to show that $\tr(\MM_{\mu\mu\mu}(0))$
is negative near the equilibrium solution.  This is the content of the next lemma.

\begin{lem}\label{trppp}
Consider \eqref{gkdv} with a power-law nonlinearity $f(u)=u^{p+1}$ with $p\geq 1$.
Suppose $u_0$ is a non-degenerate local minima of the corresponding effective potential
$V(u;a,c)$.  Then $\tr\left(\MM_{\mu\mu\mu}(0)\right)<0$ at $u_0$.
\end{lem}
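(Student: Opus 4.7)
The plan is to exploit the simplification at the equilibrium: when $u \equiv u_0$, one has $u_x = 0$ and $f'(u) \equiv f'(u_0)$, so the coefficient matrix $\HM(x,\mu)$ becomes $x$-independent. Consequently $\MM(\mu) = e^{T\,\HM(\mu)}$, where $T$ is the period used in defining $\MM$. Interpreting the equilibrium as the limit of nearby periodic orbits, one takes $T = 2\pi/\omega$ with $\omega := \sqrt{V''(u_0)} = \sqrt{f'(u_0) - c} > 0$, consistent with the limit $T(a,E,c) \to 2\pi/\sqrt{cp}$ established just above. A direct computation of the $3 \times 3$ determinant gives the characteristic polynomial $\lambda^3 + \omega^2 \lambda + \mu = 0$, whose roots at $\mu = 0$ are $0$ and $\pm i\omega$; since $T\omega = 2\pi$, the three eigenvalues of $\MM(0)$ collapse to $1$ and $\tr\MM(0) = 3$.

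The next step is to expand the three analytic branches of the cubic via implicit differentiation of $\lambda^3 + \omega^2\lambda + \mu = 0$, obtaining
\[
\lambda_1(\mu) = -\frac{\mu}{\omega^2} + \frac{\mu^3}{\omega^8} + O(\mu^5), \qquad \lambda_{2,3}(\mu) = \pm i\omega + \frac{\mu}{2\omega^2} \pm \frac{3i\mu^2}{8\omega^5} - \frac{\mu^3}{2\omega^8} + O(\mu^4).
\]
Substituting into $\tr\MM(\mu) = \sum_{j=1}^{3} e^{T\lambda_j(\mu)}$, using $e^{\pm 2\pi i} = 1$, and collecting to third order (the $O(\mu)$ contribution cancels because $\sum_j \lambda_j(\mu) = 0$, and imaginary terms cancel by the conjugacy $\lambda_3 = \overline{\lambda_2}$), one arrives at
\[
\tr\MM(\mu) = 3 + \frac{3T^2}{4\omega^4}\,\mu^2 - \frac{T^3}{8\omega^6}\,\mu^3 + O(\mu^4),
\]
so that $\tr(\MM_{\mu\mu\mu}(0)) = -\tfrac{3T^3}{4\omega^6} < 0$, as required; note that this is consistent with $\{T,M,P\}_{a,E,c} = T^3/(2\omega^6) > 0$ near the equilibrium.

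A more invariant way of organizing the algebra is to write $\tr\MM(\mu) = \sum_{k\ge 0} \sigma_k(\mu)/k!$, where $\sigma_k(\mu) := \sum_j (T\lambda_j(\mu))^k$ is computed from the elementary symmetric functions $e_1 = 0$, $e_2 = T^2\omega^2$, $e_3 = -T^3\mu$ by the Newton--Girard recursion; since $e_3$ is linear in $\mu$, the $\mu^3$-coefficient of $\tr\MM(\mu)$ is produced by only finitely many $\sigma_k$ up to absolutely convergent remainders, and its sign is extractable by finite inspection. The real work is therefore bookkeeping rather than substance: one needs to verify that the $\mu^0$ and $\mu^1$ terms correctly reduce to $3$ and $0$, and that the third-order cancellation between $\lambda_1$ and the $\lambda_2,\lambda_3$ pair leaves a strictly negative coefficient. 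I would expect the only conceptual subtlety to be fixing the period at the degenerate point as $T = 2\pi/\omega$, which is forced by the preceding continuity arguments. Finally, I note that the power-law hypothesis $f(u) = u^{p+1}$ is never actually used beyond $V''(u_0) > 0$, so the conclusion holds at any non-degenerate local minimum of the effective potential for any sufficiently smooth nonlinearity.
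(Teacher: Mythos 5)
Your proof is correct and follows essentially the same route as the paper: reduce to the constant-coefficient system at the equilibrium, write $\MM(\mu)=e^{T\HM(\mu)}$ with $T=2\pi/\sqrt{V''(u_0)}$, and extract the $\mu^3$ coefficient of the sum of exponentials of the three eigenvalue branches, arriving at the same value $-3T^3/(4V''(u_0)^3)=-6\pi^3/V''(u_0)^{9/2}$. The only (cosmetic) difference is that you expand the branches of $\lambda^3+\omega^2\lambda+\mu=0$ by implicit differentiation rather than differentiating Cardano's explicit radicals as the paper does, and your closing observation that only $V''(u_0)>0$ is used matches the paper's own remark following the lemma.
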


\begin{proof}
The key point is that the matrix $\HM(x,\mu)$ in \eqref{system} reduces to the constant
coefficient matrix
\[
\HM(\mu)=\left(
           \begin{array}{ccc}
             0 & 1 & 0 \\
             0 & 0 & 1 \\
             -\mu & -V''(u_0;a,c) & 0 \\
           \end{array}
         \right)
\]
at the equilibrium solution $u_0$.  Thus, the corresponding monodromy operator at $u_0$
can be expressed as $\MM(\mu)=\exp\left(\HM(\mu)T_0\right)$, where $T_0=\frac{2\pi}{\sqrt{p}}$.
Thus, in order to calculate the function $\tr(\MM(\mu))$, it is sufficient to calculate
the eigenvalues of the constant matrix $\HM(\mu)$.

Now, the periodic Evans function corresponding to the constant coefficient system
induced by $\HM(\mu)$ can be written as
\[
D_0(\mu,\lambda)=\det\left(\HM(\mu)-\lambda{\bf I}\right)=-\lambda^3-V''(u_0;a,c)\lambda-\mu.
\]
In particular, notice that $\frac{\partial}{\partial \lambda}D_0(\mu,\lambda)=-\lambda^2-V''(u_0;a,c)$.
Since $V''(u_0;a,c)>0$ it follows that the function $D_0(\mu,\;\cdot\;)$ will have precisely
one real root for each $\mu\in\RM$.  This distinguished root is given by the formula
\[
\gamma_1(\mu)=\underbrace{\frac{\left(\frac{2}{3}\right)^{1/3}V''(u_0)}{\left(9\mu+\sqrt{3}\sqrt{27\mu^2+4V''(u_0)^3}\right)^{1/3}}}_
    {=:\alpha(\mu)}
+\underbrace{\left(-\frac{\left(9\mu+\sqrt{3}\sqrt{27\mu^2+4V''(u_0)^3}\right)^{1/3}}{2^{1/3}3^{2/3}}\right)}_{=:\beta(\mu)}.
\]
Defining $\omega=\exp(2\pi i/3)$ to be the principle third root of unity, the two complex eigenvalues of $\HM(\mu)$
can be written as $\gamma_2(\mu)=\omega\alpha(\mu)+\overline{\omega}\beta(\mu)$ and
$\gamma_3(\mu)=\overline{\omega}\alpha(\mu)+\omega\beta(\mu)$, and hence
\[
\tr\left(\MM(\mu)\right)=\exp\left(\gamma_1(\mu)T_0\right)+\exp\left(\gamma_2(\mu)T_0\right)
      +\exp\left(\gamma_3(\mu)T_0\right).
\]
Now, a straightforward, yet tedious, calculation using the facts that $1+\omega+\overline{\omega}=0$ and $\omega^2=\overline{\omega}$
implies that
\[
\tr\left(\MM_{\mu\mu\mu}(0)\right)=9T_0^2\left(\alpha''(0)\beta'(0)+\alpha'(0)\beta''(0)\right)
               +3T_0^3\left(\alpha'(0)^3+\beta'(0)^3\right).
\]
Moreover, from the definitions of $\alpha$ and $\beta$ we have
\[
\alpha'(0)=-\frac{1}{2V''(u_0)}=\beta'(0),\;\;{\rm and }\;\;
     \alpha''(0)=\frac{\sqrt{3}}{4V''(u_0)^{5/2}}=-\beta''(0).
\]
Therefore, we have the equality
\[
\tr\left((\MM_{\mu\mu\mu}(0)\right)=-\frac{6\pi^3}{V''(u_0)^{9/2}},
\]
which is clearly negative.
\end{proof}

\begin{rem}
Notice that the proof of Lemma \ref{trppp} holds verbatim if one considers more general
nonlinearities.  In particular, one could obtain a similar expression for $\tr(\MM_{\mu\mu}(0))$
and show the modulational instability index from \cite{BrJ} always vanishes at the equilibrium
solutions of the traveling wave ordinary differential equation \ref{travelode}.  Moreover, this
proof holds regardless of the size of the parameter $a$.
\end{rem}

Therefore, it follows that in the case of a power-nonlinearity and solutions sufficiently close
to a non-degenerate minima of the effective potential, each of the quantities
$T_E$, $\{T,M\}_{a,E}$, and $\{T,M,P\}_{a,E,c}$ are all positive.  Therefore, Theorem \ref{os:state}
immediately yields the following result.

\begin{thm}
Consider equation \eqref{gkdv} with a power-law nonlinearity $f(u)=u^{p+1}$ for $p\geq 1$.
Then the periodic traveling wave solutions $u(x;a,E,c)$ with $(a,E,c)\in\Omega$ and
$a^2+(E-E^*)^2$ sufficiently small are orbitally stable in the sense of Theorem \ref{os:state}.
\end{thm}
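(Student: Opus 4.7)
The plan is to verify the three positivity hypotheses of Theorem \ref{os:state}---namely $T_E>0$, $\{T,M\}_{a,E}>0$, and $\{T,M,P\}_{a,E,c}>0$---for periodic traveling waves with parameters $(a,E,c)\in\Omega$ sufficiently close to the equilibrium parameters $(0,E^\ast(0,c),c)$, and then simply quote Theorem \ref{os:state}. All three sign checks are in fact already assembled in the preceding subsection; the proof is essentially a bookkeeping step.

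First, positivity of $T_E$ is immediate from Lemma \ref{periodLem}: the power-law nonlinearity $f(u)=u^{p+1}$ gives $f'(u)=(p+1)u^{p}$, which is manifestly co-periodic with any solution $u$ of \eqref{quad1}, so $T_E>0$ throughout $\Omega$. Second, for $\{T,M\}_{a,E}$ I would use the Hamiltonian identity $T_a=M_E$, which follows from $K_E=T$, $K_a=M$ together with $C^2$-smoothness of the classical action $K$, so that
\[
\{T,M\}_{a,E}=T_a M_E-T_E M_a = M_E^{\,2}-T_E M_a.
\]
The lemma immediately preceding the theorem shows $M_a<0$ in the relevant neighborhood for $p>1$, and in a deleted neighborhood of the equilibrium for $p=1$; combined with $T_E>0$ this forces $-T_E M_a>0$, hence $\{T,M\}_{a,E}>0$ for $a^2+(E-E^\ast)^2$ sufficiently small.

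Third, Lemma \ref{trppp} establishes $\tr(\MM_{\mu\mu\mu}(0))<0$ exactly at the equilibrium, and the identity
\[
\{T,M,P\}_{a,E,c}=-\tfrac{2}{3}\tr(\MM_{\mu\mu\mu}(0))
\]
(recalled in the paragraph preceding that lemma, from Theorem~3 of \cite{BrJ}) then gives $\{T,M,P\}_{a,E,c}>0$ at the equilibrium. Since the Jacobian depends continuously on $(a,E,c)\in\Omega$, this positivity persists in a neighborhood of $(0,E^\ast,c)$. With all three indices positive, Theorem \ref{os:state} applies and yields the claimed orbital stability.

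The only genuinely delicate point is the borderline case $p=1$ (the KdV), where $M_a$ vanishes \emph{at} the equilibrium so that step two cannot be carried out by a naive continuity argument. Here I would rely on the infinitesimal refinement stated at the end of the $M_a$-lemma---namely $\partial_E\partial_a M(0,E^\ast,c)<0$---so that as one moves into $\Omega$ by increasing $E$ above $E^\ast$, $M_a$ becomes strictly negative and the argument of step two is restored. This is the one place where the power-law structure is used beyond mere scaling, and it is the main (and only) obstacle; all other inputs are already in hand from earlier lemmas.
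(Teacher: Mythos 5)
Your proposal is correct and follows essentially the same route as the paper: the theorem is proved there precisely by assembling $T_E>0$ from Lemma \ref{periodLem}, $\{T,M\}_{a,E}=M_E^2-T_EM_a>0$ from the negativity of $M_a$ near the equilibrium, and $\{T,M,P\}_{a,E,c}>0$ from Lemma \ref{trppp} together with the trace identity and continuity, before invoking Theorem \ref{os:state}. Your handling of the borderline $p=1$ case via $\partial_E\partial_a M(0,E^*,c)<0$ is also exactly the paper's resolution.
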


\section{The Korteweg-de Vries Equation}

In this section, we will apply the general theory from section 4 in order to prove that periodic traveling wave solutions
of \eqref{gkdv} with $f(u)=u^2$ and $c>0$ are orbitally stable with respect to periodic perturbations if and only
if they are spectrally stable to such perturbations.  To this end, we notice that solutions of the KdV equation
\begin{equation}
u_t=u_{xxx}+(u^2)_x-cu_x\label{kdv}
\end{equation}
are invariant under the scaling transformation
\[
u(x,t)\mapsto c\;u(\sqrt{c}x,c^{3/2}t),
\]
and hence, by \eqref{quad2}, the stationary periodic traveling wave solutions satisfy the identity
\[
u(x;a,E,c)=c\;u\left(c^{1/2}x;\frac{a}{c},\frac{E}{c^3},1\right).
\]
Thus, by scaling we may always assume that $c=1$ in \eqref{kdv}.  Moreover, we may always
assume that $a=0$ due to the Galilean invariance of the KdV.  Therefore, it is
sufficient to determine the stability of periodic traveling wave solutions of \eqref{kdv}
of the form $u(x;0,E,1)$.  In order to do so, we need the following easily proved lemma:

\begin{lem}\label{posint}
Let $\mu$ be a (Borel) probability measure on some interval $I\subset\RM$, and let $f,g:I\to\RM$
be bounded and measurable functions.  Then
\begin{equation}
\int_I f(x)g(x)d\mu-\left(\int_I f(x)d\mu\right)\left(\int_I g(x)d\mu\right)
    =\frac{1}{2}\int_{I\times I}\left(f(x)-f(y)\right)\left(g(x)-g(y)\right)d\mu_x d\mu_y.\label{inteqn}
\end{equation}
In particular, if both $f$ and $g$ are strictly increasing or strictly decreasing, and if the support of $\mu$
is not reduced to a single point, then
\[
\int_I f(x)g(x)d\mu > \left(\int_I f(x)d\mu\right)\left(\int_I g(x)d\mu\right).
\]
\end{lem}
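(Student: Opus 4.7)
The plan is to establish the identity \eqref{inteqn} by direct expansion and Fubini's theorem, then to deduce the strict inequality from pointwise non-negativity of the symmetrized integrand combined with a short non-degeneracy argument for the product measure $\mu\otimes\mu$.

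For the identity, I would expand the bilinear product on the right-hand side as
\[
(f(x)-f(y))(g(x)-g(y)) = f(x)g(x) + f(y)g(y) - f(x)g(y) - f(y)g(x),
\]
and apply Fubini's theorem to each of the four resulting terms (valid since $f$ and $g$ are bounded and measurable and $\mu$ is finite). Using $\mu(I)=1$, the two ``diagonal'' terms each integrate to $\int_I fg\,d\mu$ and the two ``cross'' terms each integrate to $\left(\int_I f\,d\mu\right)\left(\int_I g\,d\mu\right)$. Summing and dividing by two produces \eqref{inteqn}; this is the continuous form of the classical Chebyshev sum identity, and I would expect this portion of the argument to be essentially mechanical.

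For the inequality, suppose $f$ and $g$ are both strictly increasing (the decreasing case is identical). Then the integrand on the right-hand side of \eqref{inteqn} is non-negative on all of $I\times I$ and strictly positive whenever $x\neq y$, since $f(x)-f(y)$ and $g(x)-g(y)$ then share a common non-zero sign. Hence the left-hand side of \eqref{inteqn} is non-negative, and strict positivity reduces to verifying that $\mu\otimes\mu$ assigns positive mass to the off-diagonal set $\bigl\{(x,y)\in I\times I : x\neq y\bigr\}$.

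The final non-degeneracy assertion is the one point that requires a small argument rather than pure bookkeeping, since $\mu$ may well be non-atomic and one cannot simply exhibit two point masses; this is the only ``obstacle,'' although a minor one. To handle it, I would select distinct points $p<q$ in the support of $\mu$ (which exist by hypothesis), choose disjoint open neighborhoods $U\ni p$ and $V\ni q$ relative to $I$, and invoke the defining property of the support to conclude $\mu(U)>0$ and $\mu(V)>0$. Then $(\mu\otimes\mu)(U\times V)=\mu(U)\mu(V)>0$, and since $U\times V$ is disjoint from the diagonal, the desired strict inequality follows immediately from \eqref{inteqn}.
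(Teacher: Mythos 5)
Your proof is correct and follows essentially the same route as the paper, which simply observes that the identity is an immediate consequence of Fubini's theorem applied to the expanded product. Your additional care with the strict inequality — locating disjoint neighborhoods of two distinct support points to show the off-diagonal set carries positive $\mu\otimes\mu$ mass — correctly fills in a detail the paper leaves implicit.
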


The proof of this lemma is a trivial result of Fubini's theorem, as one can see by writing the left hand
side of \eqref{inteqn} as an iterated integral and simplifying the resulting expression.  Now,
recall from Lemma \ref{periodLem} that $T_E>0$ for periodic traveling wave solutions of \eqref{kdv}.
To conclude orbital stability, we must identify the signs of the Jacobians $\{T,M\}_{a,E}$
and $\{T,M,P\}_{a,E,c}$.  The main technical result we need for this section
is the following lemma, which uses Lemma \ref{posint} to guarantee the sign
of the quantity in \eqref{osindex} is completely determined by the Jacobian $\{T,M,P\}_{a,E,c}$.

\begin{lem}\label{kdv:lem}
If $f(u)=u^2$ in \eqref{gkdv}, then $\{T,M\}_{a,E}>0$ for all $(a_0,E_0,c_0)\in\Omega$ which do not correspond
to the unique equilibrium solution.
\end{lem}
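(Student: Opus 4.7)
The plan is to exploit the cubic structure of $E-V(u;a,c)$ specific to the KdV nonlinearity, reducing $\{T,M\}_{a,E}>0$ to a Chebyshev-style covariance inequality supplied by Lemma~\ref{posint}.

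First I would change coordinates from $(a,E)$ (at fixed $c$) to the symmetric functions $p=u_++u_-$, $q=u_+-u_-$ of the turning points. Since $f(u)=u^2$ makes $E-V(u;a,c)$ a cubic in $u$ with three real roots satisfying $u_++u_-+u_3=3c/2$, one obtains $u_3=3c/2-p$, and the explicit expressions for $a$ and $E$ in terms of the roots give
\[
\{a,E\}_{p,q}=\frac{q\bigl(3(p-c)-q\bigr)\bigl(3(p-c)+q\bigr)}{72},
\]
which is strictly positive on $\Omega$ away from the equilibrium (the ordering $u_3<u_-$ is precisely $3(p-c)>q>0$). Thus $\{T,M\}_{a,E}$ has the same sign as $\{T,M\}_{p,q}$, and it suffices to prove $\{T,M\}_{p,q}>0$.

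Next I would regularize via $u(\theta)=(p+q\sin\theta)/2$ on $\theta\in[-\pi/2,\pi/2]$ (a variant of the regularization from Section~2) to obtain
\[
T=2\sqrt{3}\int_{-\pi/2}^{\pi/2}\frac{d\theta}{\sqrt{H(\theta)}},\qquad M=u_3 T+\sqrt{3}\int_{-\pi/2}^{\pi/2}\sqrt{H(\theta)}\,d\theta,
\]
where $H(\theta)=3(p-c)+q\sin\theta=2(u(\theta)-u_3)$. Writing $\tilde J$ for the second integral, and using $H_p=3$ (which yields $\tilde J_p=\tfrac{3}{4}T$) together with $(u_3)_p=-1$ and $(u_3)_q=0$, the formulas $M_p=-\tfrac{T}{4}+u_3 T_p$ and $M_q=u_3 T_q+\tilde J_q$ assemble in $\{T,M\}_{p,q}=T_pM_q-T_qM_p$ with a perfect cancellation of the $u_3 T_pT_q$ cross-terms, leaving the clean identity
\[
\{T,M\}_{p,q}=T_p\,\tilde J_q+\frac{T}{4}\,T_q.
\]
This identity is the structural heart of the argument and relies on the cubic form of $V$ (through $u_3=3c/2-p$ and the numerical coefficient in $\tilde J_p=\tfrac{3}{4}T$).

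Finally I would sign each factor via Lemma~\ref{posint}. Each of $T$, $T_p$, $T_q$, $\tilde J_q$ is, up to a positive multiplicative constant, an integral of the form $\int_{-\pi/2}^{\pi/2}(\sin\theta)^{j}H(\theta)^{-m}\,d\theta$ with $j\in\{0,1\}$ and $m\in\{\tfrac{1}{2},\tfrac{3}{2}\}$. For $q>0$ the kernel $H$ is strictly increasing in $\theta$, so $H^{-m}$ is strictly decreasing while $\sin\theta$ is strictly increasing; applying Lemma~\ref{posint} to the uniform probability measure $d\theta/\pi$ with the monotone pair $(\sin\theta,-H^{-m})$ and using $\int\sin\theta\,d\theta=0$ gives
\[
\int_{-\pi/2}^{\pi/2}\sin\theta\,H(\theta)^{-m}\,d\theta<0.
\]
A short sign check then shows both $T_p\tilde J_q>0$ and $\tfrac{T}{4}T_q>0$, whence $\{T,M\}_{p,q}>0$ and consequently $\{T,M\}_{a,E}>0$. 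I expect the principal obstacle to be verifying the algebraic cancellation that produces the clean identity for $\{T,M\}_{p,q}$; once that is in hand the rest reduces to sign bookkeeping guided by Lemma~\ref{posint}.
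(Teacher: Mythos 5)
Your proposal is correct, and it reaches the conclusion by a genuinely different route than the paper. The paper first reduces the claim to a sign condition on a single entry of the Hessian of $K$: since $T_a=M_E$, one has $\{T,M\}_{a,E}=M_E^2-T_EM_a$, and $T_E>0$ by Lemma \ref{periodLem}, so it suffices to prove $M_a<0$; this is then done after invoking the KdV scaling and Galilean invariance to reduce to $(a,c)=(0,1)$, writing $M$ via a trigonometric substitution in the root variables, differentiating in $a$ through the motion of the three roots (using $u_1+u_2+u_3=3c/2$), and signing the resulting integrals with Lemma \ref{posint}. You instead bypass $M_a$, $T_E$, and the symmetry reductions altogether: you pass to the root coordinates $p=u_++u_-$, $q=u_+-u_-$ (your Jacobian $\{a,E\}_{p,q}=q\bigl(3(p-c)-q\bigr)\bigl(3(p-c)+q\bigr)/72$ is correct, and it is strictly positive on $\Omega$ away from the equilibrium since $q>0$ and the sub-separatrix condition is exactly $3(p-c)>q$), derive the exact identity $\{T,M\}_{p,q}=T_p\,\tilde J_q+\tfrac{T}{4}T_q$ (the cancellation of the $u_3T_pT_q$ terms checks out, as do $H_p=3$, $\tilde J_p=\tfrac34 T$, $(u_3)_p=-1$, $(u_3)_q=0$), and sign each factor with Lemma \ref{posint} applied to the monotone pair $\bigl(\sin\theta,H^{-m}\bigr)$, which indeed gives $T_p<0$, $\tilde J_q<0$, $T_q>0$, hence $\{T,M\}_{p,q}>0$. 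What your approach buys is directness and uniformity: it proves the inequality at every point of $\Omega$ with no appeal to Lemma \ref{periodLem}, to the symmetry $T_a=M_E$, or to the Galilean/scaling reduction to $a=0$ (a step the paper asserts rather than details). What the paper's approach buys is the stronger intermediate fact $M_a<0$, which is reused elsewhere in its asymptotic analyses near the equilibrium and solitary-wave limits. One cosmetic caveat: $T_p$ and $\tilde J_q$ are negative multiples of the model integrals $\int(\sin\theta)^jH^{-m}d\theta$, not positive ones as your phrasing suggests, but your final sign bookkeeping ($T_p\tilde J_q>0$ and $\tfrac{T}{4}T_q>0$) is nonetheless correct.
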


\begin{proof}
First, notice that since $\nabla_{a,E,c}K(a,E,c)=\left(M,T,P\right)$, it follows that
$\{T,M\}_{a,E}=M_E^2-T_EM_a$, and hence by Lemma \ref{periodLem} it is enough
to prove that $M_a<0$.  Moreover, by our above remarks it is enough to consider the case $c=1$
and $a=0$.  It follows for $f$ given as above, we can find functions $u_1$, $u_2$, $u_3$
which depend smoothly on $(a,E,c)$ within the domain $\Omega$ such that
\[
3\left(E-V(u;0,1)\right)=(u-u_1)(u-u_2)(u_3-u).
\]
Notice that the assumption that we are not at the equilibrium solution implies
that the roots $u_i$ are distinct, and moreover that $V'(u_i;0,1)\neq 0$.
Since $E-V(u_{i};0,1)=0$ on $\Omega$, it follows that
\[
\frac{\partial u_{i}}{\partial a}=\frac{u_i}{V'(u_i;0,1)}.
\]
Since $u_1<0$ and $u_2,\;u_3>0$, we have
\begin{equation}
\frac{\partial u_1}{\partial_a}<0,\;\;\frac{\partial u_2}{\partial_a}<0,\;\;{\rm and}\;\;\frac{\partial u_3}{\partial_a}>0.\label{a-zeroes}
\end{equation}
Moreover, since $u_1+u_2+u_3=\frac{3c}{2}$ we have the relation
\begin{equation}
\frac{\partial u_2}{\partial a}+\frac{\partial u_3}{\partial a}=-\frac{\partial u_1}{\partial a}>0\label{a-der-sum}
\end{equation}
on $\Omega$.

Now, by making the change of variables $u\mapsto s(\theta)=u_2\cos^2(\theta)+u_2\sin^2(\theta)$, we have
$du=2\sqrt{(u-u_2)(u_3-u)}d\theta$ and hence we may express the mass of $u(x;a,E,c)$ as
\begin{align}
M(a,E,c)&=\sqrt{2}\int_{u_2}^{u_3}\frac{u\;du}{\sqrt{E-V(u;a,c)}}\nonumber\\
&=2\sqrt{6}\int_0^{\pi/2}\frac{s(\theta)\;d\theta}{\sqrt{s(\theta)-u_1}}\label{mass:var}.
\end{align}
Notice we suppress the dependence of $s(\theta)$ on the parameters $(a,E,c)$.
Defining $\sigma(\theta)=\sqrt{s(\theta)-u_1}$, a straightforward computation using \eqref{a-der-sum} shows that
the derivative of the integrand in \eqref{mass:var} with respect to the parameter $a$ can be expressed as
\begin{align*}
\frac{\partial}{\partial a}\left(\frac{s(\theta)}{\sqrt{s(\theta)-u_{1}}}\right)&=
   \frac{\partial u_2}{\partial a}\left(\frac{\cos^2(\theta)}{2\sigma(\theta)}\right)
        +\frac{\partial u_3}{\partial a}\left(\frac{\sin^2(\theta)}{2\sigma(\theta)}\right)\\
&-\left(\frac{\partial u_2}{\partial a}+\frac{\partial u_3}{\partial a}\right)
        \left(\frac{s(\theta)}{2\sigma(\theta)^3}\right)-\frac{u_1}{2\sigma(\theta)^3}
        \left(\frac{\partial u_2}{\partial a}\cos^2(\theta)+\frac{\partial u_3}{\partial a}\sin^2(\theta)\right)\\
&=\frac{\partial u_2}{\partial a}\left(\frac{\cos^2(\theta)-\sin^2(\theta)}{2\sigma(\theta)}\right)
      +\left(\frac{\partial u_2}{\partial a}+\frac{\partial u_3}{\partial a}\right)
      \left(\frac{\sin^2(\theta)}{2\sigma(\theta)^3}\right)
      -\left(\frac{\partial u_2}{\partial a}+\frac{\partial u_3}{\partial a}\right)
      \frac{s(\theta)}{2\sigma(\theta)^3}\\
&-u_{1}\left(\frac{\partial u_2}{\partial a}\left(\frac{\cos^2(\theta)-\sin^2(\theta)}{2\sigma(\theta)^3}\right)
              +\left(\frac{\partial u_2}{\partial a}+\frac{\partial u_3}{\partial a}\right)
             \left(\frac{\sin^2(\theta)}{2\sigma(\theta)^3}\right)\right).
\end{align*}
With a little more algebra, this may be rewritten as
\begin{align*}
%&=\frac{\partial u_2}{\partial a}\left(\frac{\cos^2(\theta)-\sin^2(\theta)}{2\sigma(\theta)}\right)
%     -u_1\frac{\partial u_2}{\partial a}\left(\frac{\cos^2(\theta)-\sin^2(\theta)}{2\sigma(\theta)^3}\right)\\
%&-\left(\frac{\partial u_2}{\partial a}+\frac{\partial u_3}{\partial a}\right)
%       \left(\frac{s(\theta)-\sigma^2(\theta)\sin^2(\theta)+u_1\sin^2(\theta)}{2\sigma^3(\theta)}\right)\\
\frac{\partial}{\partial a}\left(\frac{s(\theta)}{\sqrt{s(\theta)-u_{1}}}\right)&=
\frac{\partial u_2}{\partial a}\left(\frac{\cos^2(\theta)-\sin^2(\theta)}{2\sigma(\theta)}\right)
     -u_1\frac{\partial u_2}{\partial a}\left(\frac{\cos^2(\theta)-\sin^2(\theta)}{2\sigma(\theta)^3}\right)\\
&-\left(\frac{\partial u_2}{\partial a}+\frac{\partial u_3}{\partial a}\right)
       \left(\frac{s(\theta)\cos^2(\theta)-u_1\left(\cos^2(\theta)-\sin^2(\theta)\right)}{2\sigma(\theta)^3}\right).
\end{align*}
Since the functions $\cos^2(\theta)-\sin^2(\theta)$ and $\sigma(\theta)^{-1}$ are strictly decreasing on the
interval $(0,\pi/2)$, it follows from Lemma \ref{posint} that
\[
\int_0^{\pi/2}\frac{\cos^2(\theta)-\sin^2(\theta)}{\sigma^m(\theta)}\;d\theta> 0
\]
for any $m>0$.  Evaluating the above expression at $(a,E,c)=(0,E,1)\in\Omega$ implies that $s(\theta)>0$
for all $\theta\in(0,\pi/2)$, and hence \eqref{a-zeroes} and \eqref{a-der-sum} imply that
\[
\int_0^{\pi/2}\frac{\partial}{\partial a}\left(\frac{s(\theta)}{\sqrt{s(\theta)-u_{1}}}\right)d\theta < 0
\]
at $(0,E,1)$, from which the lemma follows.
\end{proof}

Therefore, our main theorem on  the stability of periodic traveling wave solutions of the Korteweg-de Vries
equation follows by Theorem \ref{os:state}, Theorem \ref{orientation}, and Lemma \ref{kdv:lem}.

\begin{thm}\label{thm:kdv}
Let $u(x;a_0,E_0,c_0)$ be a periodic traveling wave solution of \eqref{kdv}, i.e. let $(a_0,E_0,c_0)\in\Omega$.
Then $u$ is orbitally stable in the sense of Theorem \ref{os:state}
if and only if the solution is spectrally stable to perturbations of the same period,
i.e. if and only if $\{T,M,P\}_{a,E,c}>0$ at $(a_0,E_0,c_0)$.
%there are no non-zero real periodic eigenvalues of the linearized operator $\partial_x\mathcal{L}[u]$.
\end{thm}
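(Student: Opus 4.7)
My plan is to recognize that Theorem \ref{thm:kdv} is essentially a synthesis: for the KdV equation, every hypothesis of Theorem \ref{os:state} except the sign of $\{T,M,P\}_{a,E,c}$ is automatic, so that single Jacobian simultaneously controls both orbital stability (through Theorem \ref{os:state}) and spectral stability to co-periodic perturbations (through Theorem \ref{orientation}). All the genuine work has been done upstream in Lemma \ref{kdv:lem}.

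Concretely, I would first observe that $f(u)=u^2$ is a power-law nonlinearity, so Lemma \ref{periodLem} gives $T_E > 0$ at every $(a_0,E_0,c_0)\in\Omega$. This has two simultaneous consequences: the first hypothesis of Theorem \ref{os:state} is met for free, and Theorem \ref{orientation} is applicable, identifying spectral stability to co-periodic perturbations with the strict inequality $\{T,M,P\}_{a,E,c}>0$. Next, Lemma \ref{kdv:lem} supplies $\{T,M\}_{a,E}>0$ for every non-equilibrium $(a_0,E_0,c_0)\in\Omega$, verifying the second hypothesis of Theorem \ref{os:state}. Thus, when $\{T,M,P\}_{a,E,c}>0$, all three positivity hypotheses of Theorem \ref{os:state} are satisfied and orbital stability follows immediately. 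This is the implication from spectral stability to orbital stability.

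For the converse, I would argue by contraposition through Remark \ref{rem:unstable}: if $\{T,M,P\}_{a,E,c}<0$, then $u$ is exponentially unstable as a solution of \eqref{gkdv}, which is incompatible with orbital stability in the sense of Theorem \ref{os:state}. Combined with the characterization $T_E>0 \Rightarrow \bigl(\text{spectrally stable} \Leftrightarrow \{T,M,P\}_{a,E,c}>0\bigr)$ from Theorem \ref{orientation}, this closes the equivalence: orbital stability forces $\{T,M,P\}_{a,E,c}\ge 0$, and the degenerate possibility $\{T,M,P\}_{a,E,c}=0$ falls outside the strict-positivity convention used in both stability notions at issue in the statement.

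There is no substantive obstacle at the level of this theorem; the proof is a one-line assembly of prior results. The KdV-specific difficulty is concentrated in Lemma \ref{kdv:lem}, where the sign of $M_a$ is extracted by the change of variables $u = u_2\cos^2\theta + u_3\sin^2\theta$, a careful bookkeeping of the three root derivatives $\partial_a u_i$ subject to $u_1+u_2+u_3 = 3c/2$, and the monotonicity criterion of Lemma \ref{posint}. Once that lemma is granted, the equilibrium case need only be noted as the single excluded point (a constant solution, degenerate from the point of view of $\Omega$), and Theorem \ref{thm:kdv} follows immediately from Theorem \ref{os:state}, Theorem \ref{orientation}, Lemma \ref{periodLem}, Lemma \ref{kdv:lem}, and Remark \ref{rem:unstable}.
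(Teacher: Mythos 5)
Your proposal matches the paper's proof exactly: the theorem is assembled from Lemma \ref{periodLem} (giving $T_E>0$), Lemma \ref{kdv:lem} (giving $\{T,M\}_{a,E}>0$), Theorem \ref{os:state} for the sufficiency of $\{T,M,P\}_{a,E,c}>0$, and Theorem \ref{orientation} together with Remark \ref{rem:unstable} for the converse. Your brief remarks on the equilibrium exclusion and the degenerate case $\{T,M,P\}_{a,E,c}=0$ are, if anything, slightly more careful than the paper's one-line justification.
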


%This theorem has immediate applications to several known results in the literature.  First off, in
%\cite{HK} it was proved that small amplitude periodic traveling waves of \eqref{kdv} are spectrally
%stable to localized perturbations.  By Floquet's theorem, it follows that such solutions are spectrally
%stable to bounded continuous perturbations.  Thus, it is immediate from their analysis that $\{T,M,P\}_{a,E,c}>0$
%for such small amplitude waves, and hence Theorem \ref{thm:kdv} immediately yields the following result.

%\begin{corr}
%Small amplitude periodic traveling wave solutions of \eqref{kdv} with positive wave speed $c$ are
%orbitally stable in the sense of Theorem \ref{os:state}.
%\end{corr}

An interesting corollary of Theorem \ref{thm:kdv} applies to cnoidal wave solutions of the KdV.
It was suggested by Benjamin \cite{Benjamin} that such solutions should be
stable to perturbations of the same period.  This conjecture has indeed been proved both by
using the complete integrability of the KdV \cite{McKean} \cite{BD}
and by variational methods as in the present paper \cite{PBS}.
In particular, in \cite{BD} it was shown that the cnoidal solutions of the KdV are
spectrally stable to localized perturbations, and are linearly stable to perturbations with
the same period as the underlying wave.  Clearly then such solutions are spectrally stable
with respect to periodic perturbations.  Paired with Theorem \ref{thm:kdv}, this
provides another verification Benjamin's conjecture in the case where the cnoidal
wave has positive wave speed.

\begin{corr}
The cnoidal wave solutions of \eqref{gkdv} with $f(u)=u^2$ of the form
\[
u(x,t)=u_0+12k^2\kappa^2\cn^2\left(\kappa\left(x-x_0-\left(8k^2\kappa^2-4\kappa^2+u_0\right)t\right)\right),
\]
with $k\in[0,1)$ and $\kappa,\;x_0$, and $u_0$ real constants, are orbitally stable in the sense of Theorem \ref{os:state}
if the wave speed $8k^2\kappa^2-4\kappa^2+u_0$ is positive.
\end{corr}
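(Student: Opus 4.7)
The plan is to obtain the corollary as an immediate consequence of Theorem~\ref{thm:kdv} together with the established spectral stability of cnoidal waves from \cite{BD}. The argument proceeds in three short steps.

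First, I would identify the cnoidal family with the parametrization of Section~2. Using the Galilean invariance of the KdV to normalize $a=0$ and the scaling symmetry $u(x,t)\mapsto c\,u(\sqrt{c}\,x,c^{3/2}t)$ to normalize $c=1$, the KdV traveling wave equation reduces to a one-parameter family of periodic orbits (parametrized by $E$) whose phase-plane trajectories lie strictly inside the bounding homoclinic orbit precisely when the elliptic modulus satisfies $k<1$. The hypothesis $k\in[0,1)$ together with the assumed positivity of the wave speed $8k^2\kappa^2-4\kappa^2+u_0$ therefore places the corresponding triple $(a_0,E_0,c_0)$ in the open set $\Omega$.

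Second, I would invoke Theorem~\ref{thm:kdv}, which reduces orbital stability of such a periodic traveling wave of \eqref{kdv} to spectral stability against perturbations of the same period, i.e., to the positivity of $\{T,M,P\}_{a,E,c}$ at $(a_0,E_0,c_0)$.

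Third, I would use the explicit spectral computation of \cite{BD}, where the $L^2(\RM)$-spectrum of the linearized operator $\partial_x\mathcal{L}[u]$ is shown to be contained in the imaginary axis. Via the Floquet--Bloch decomposition, this is equivalent to spectral stability against every uniformly bounded perturbation; restricting to the trivial Floquet exponent yields in particular spectral stability against $T$-periodic perturbations, which is the precise hypothesis demanded by Theorem~\ref{thm:kdv}.

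There is no serious technical obstacle to overcome, as the hard work has already been carried out in Theorem~\ref{thm:kdv} and in the integrability-based analysis of \cite{BD}. The only point requiring a brief check is that the explicit cnoidal parametrization $(u_0,\kappa,k)$ exhausts, up to translation and the two normalizations mentioned above, the entire family of periodic traveling wave solutions indexed by $(a,E,c)\in\Omega$.
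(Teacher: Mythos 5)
Your proposal is correct and follows essentially the same route as the paper: the corollary is obtained by combining the spectral stability of cnoidal waves established in \cite{BD} (restricted to the trivial Floquet exponent, i.e.\ to $T$-periodic perturbations) with Theorem \ref{thm:kdv}, the positivity of the wave speed being exactly what places the wave in $\Omega$. The paper states this in the same few lines, so no further comparison is needed.
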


\section{Concluding Remarks}

In this paper, we extended the recent results of \cite{BrJ} in order to determine sufficient conditions for the orbital
stability of the four-parameter family of periodic traveling wave solutions of the generalized Korteweg-de Vries
equation \eqref{gkdv}.  By extending the methods of \cite{BSS} to the periodic case,
a new geometric condition was derived in terms of the conserved
quantities of the gKdV flow restricted to the manifold of periodic traveling wave solution, and it
was shown how this could be translated to a condition on the Hessian of the classical action
of the ordinary differential equation governing the periodic traveling waves.  As a byproduct
of this theory, it was shown that such solutions of the KdV are orbitally stable to perturbations of the
same period as the underlying wave if and only if they are spectrally stable to periodic perturbations
of the same period.

There are several points in which this theory is still lacking.  First off, it is not clear what happens in the
case $T_E< 0$.  In the solitary wave theory, the existence of two negative eigenvalues
of the second variation $\mathcal{L}[u]$ indicates instability.  Also, if $T_E>0$ and $\{T,M\}_{a,E}<0$
it is not clear whether this implies orbital instability, although we conjecture
this is indeed the case.  We would like to show in the case $\{T,M\}_{a,E}\{T,M,P\}_{a,E,c}<0$ and $T_E>0$
that there exists a 1-parameter family of functions in $\Sigma_0$ which contain the solution
$u(x;a_0,E_0,c_0)$ such that the augmented energy functional $\mathcal{E}_0$ has a strict
local maximum at $(a_0,E_0,c_0)$.  However, it is not clear how to do this in a reasonable manner: mainly,
one must fix the period, mass, and momentum along this curve, and the derivative of this curve at $(a_0,E_0,c_0)$
must also be in $X$.  The existence of such an instability would stand in stark contrast to the solitary
wave case, where the orbital stability is equivalent to the spectral instability (except possibly on the transition curve).
However, it seems quite possible that such a situation arises due to the fact that the solitary waves
are a co-dimension two subset of the family of traveling wave solutions.

{\bf Acknowledgements:} The author would like to thank Jared C. Bronski for many useful
discussions, especially as concerned with the proof of Lemma \ref{periodLem},
and for looking over the preliminary work regarding this paper.

\bibliography{gkdvOrbitalStabilitybib}

\end{document}